\documentclass[11pt,a4paper]{article}

\usepackage[utf8]{inputenc}
\usepackage[T1]{fontenc}
\usepackage{amsmath,amssymb,amsthm,mathtools,mathrsfs}
\usepackage[margin=2.8cm]{geometry}
\usepackage{xcolor}
\usepackage[colorlinks=true,linkcolor=blue!55!black,citecolor=green!45!black,urlcolor=blue!55!black]{hyperref}
\usepackage{enumitem}
\usepackage{comment}

\newif\ifshowchanges
\showchangestrue         

\theoremstyle{plain}
\newtheorem{theorem}{Theorem}[section]
\newtheorem{proposition}[theorem]{Proposition}
\newtheorem{corollary}[theorem]{Corollary}
\newtheorem{lemma}[theorem]{Lemma}
\theoremstyle{remark}
\newtheorem{remark}[theorem]{Remark}
\newtheorem{example}[theorem]{Example}
\theoremstyle{definition}
\newtheorem{definition}[theorem]{Definition}
\newtheorem{assumption}[theorem]{Assumption}

\newcommand{\R}{\mathbb R}
\newcommand{\Z}{\mathbb Z}
\newcommand{\N}{\mathbb N}
\newcommand{\T}{\mathbb T}
\newcommand{\E}{\mathbb E}
\newcommand{\PP}{\mathbb P}
\newcommand{\Var}{\operatorname{Var}}
\newcommand{\Cov}{\operatorname{Cov}}
\newcommand{\cum}{\operatorname{cum}}
\newcommand{\dd}{\,\mathrm d}
\newcommand{\la}{\langle}
\newcommand{\ra}{\rangle}
\newcommand{\XN}{\mathcal X_N^{P}}
\newcommand{\YN}{\mathcal Y_N^{P}}
\newcommand{\WN}{\mathcal W_N}
\newcommand{\TN}{\mathbb T^d_N}
\newcommand{\LN}{\Lambda_N}

\DeclareMathOperator{\supp}{supp}
\DeclareMathOperator{\mst}{mst}

\begin{document}

\title{Gaussian fluctuations of differential observables\\ of stationary lattice fields}
\author{%
  Fabio Coppini\footnote{Mathematical Institute, Utrecht University, Budapestlaan 6, 3584 CD Utrecht, The Netherlands.
    {f.coppini@uu.nl}}
  \and 
  Wioletta M. Ruszel\footnote{Mathematical Institute, Utrecht University, Budapestlaan 6, 3584 CD Utrecht, The Netherlands.
  {w.m.ruszel@uu.nl}}
  }
\date{\today}
\maketitle

\begin{abstract}
We study Gaussian fluctuations of differential observables of centered stationary random fields on the discrete $d$-dimensional torus of mesh $1/N$. Our main result is a Central Limit Theorem for the fields $f\mapsto N^{-d/2-m}\sum_x\phi_x^N(Pf)(x/N)$, where $P$ is a scalar homogeneous constant-coefficient differential operator of order $m\geq0$ with Fourier symbol $p$, e.g., $P=\Delta$ and $m=2$. Under suitable assumptions on the spectral scaling limit $\gamma$ of the covariance and the normalized cumulants of order $r\geq3$, we prove convergence in law in a negative Sobolev space to a centered Gaussian random distribution with covariance multiplier $|p(k)|^2\gamma(k)$. The two factors separate the effects of the operator and the field and, notably, differentiation can compensate a low-frequency spectral singularity. For the lattice Gaussian Free Field, this gives a projected white-noise limit for the gradient. The proofs use Fourier analysis and the method of moments and cumulants, together with a tightness argument. Finally, we give two sufficient criteria for the cumulant assumption, namely a tree--graph bound and the Dobrushin uniqueness condition for finite-range Gibbs fields, and recover the white-noise limits of the centered Ising magnetization and Potts colour fluctuation fields at high temperature.

\par\medskip\noindent\textit{Keywords:} central limit theorem; stationary lattice field;
white noise; Gaussian free field;  cumulants; tree--graph inequality; Dobrushin uniqueness condition.
\par\noindent\textit{MSC 2020:} 60F05; 60G60; 60K35; 82B20.
\end{abstract}

\section{Introduction}\label{sec:intro}
\subsection{Motivation and main results}\label{sec:motivation}

The central limit theorems (CLT) for fluctuations of stationary lattice fields in the literature
typically rest on a structural hypothesis about the field: most often the FKG
inequalities together with finite susceptibility \cite{newman_normal_1980}, a martingale or
mixing structure \cite{kipnis_central_1986,bolthausen_central_1982}, or an explicit Gaussian/chaos representation \cite{coppini_central_2025}.  
The starting point of this work is to formulate conditions directly in terms of the covariance and the cumulants, without assuming any of these additional structures. More precisely, we combine a spectral scaling assumption with quantitative smallness of the higher cumulants to obtain a macroscopic Gaussian fluctuation limit.

Let $d\geq1$ and let $\phi^N=(\phi_x^N)_{x\in\Lambda_N}$ be a centered stationary random field on the discrete torus, where $\Lambda_N=\{0,\ldots,N-1\}^d$. We identify $x\in\Lambda_N$ with $x/N\in\mathbb T^d$. Let $P=\sum_{|\alpha|=m}c_\alpha\partial^\alpha$ be a scalar homogeneous differential operator of exact order $m\geq0$, with constant real coefficients. For every real $f\in C^\infty(\mathbb T^d)$, define
\begin{equation}\label{eq:def:PN}
\mathcal X_N^P(f):=N^{-d/2-m}\sum_{x\in\Lambda_N}\phi_x^N(Pf)(x/N).
\end{equation}
For instance, $P=\operatorname{Id}$ gives the usual volume average, while $P=\partial_v:=v\cdot\nabla$ or $P=\Delta$ allows us to study differential observables of the field. We are interested in conditions on the covariance and the cumulants of $\phi^N$ such that $\mathcal X_N^P$ satisfies a Central Limit Theorem (CLT). The field need not be Gaussian, and we do not assume the FKG inequalities or a martingale representation.

Denote the covariance by $\Gamma_N(z)=\operatorname{Cov}(\phi_0^N,\phi_z^N)$ and its discrete Fourier transform by $\widehat\Gamma_N$. Let $p$ be the Fourier symbol of $P$, so that $\widehat{Pf}(k)=p(k)\widehat f(k)$. Our main results can be stated informally as follows.

\begin{theorem}[Informal version of Theorem~\ref{thm:main}]\label{thm:intro-informal}
Let $\phi^N$ have finite moments of all orders. Assume the spectral scaling and higher-cumulant conditions of Assumptions \ref{ass:spectral}  and \ref{ass:cumulants}, and denote the limiting covariance profile by $\gamma$. For the rescaled covariance profile, we obtain
\[
\mathcal X_N^P\overset{d}\longrightarrow\mathcal X^P
\qquad \text{ in} \text{  } H^{-s}(\mathbb T^d)
\text{ for } s\text{ sufficiently large},
\]
where $\mathcal X^P$ is a centered Gaussian random distribution with covariance 
\[
\mathbb E[\mathcal X^P(f)\mathcal X^P(g)]
= \displaystyle\sum_{k\in\mathbb Z^d}
\overline{\widehat f(k)}\widehat g(k)|p(k)|^2\gamma(k),
\]
and $f,g$ are suitable test functions.
\end{theorem}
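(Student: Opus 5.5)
The plan follows the three ingredients announced in the abstract: Fourier analysis for the covariance, the method of cumulants for finite-dimensional Gaussianity, and a Sobolev tightness bound to upgrade to convergence in law in $H^{-s}(\mathbb T^d)$.

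\textbf{Step 1 (covariance).} For real $f\in C^\infty(\mathbb T^d)$ we expand $Pf(x/N)=\sum_{k\in\mathbb Z^d}p(k)\widehat f(k)e^{2\pi i k\cdot x/N}$ (with the paper's normalisation of the symbol). Stationarity turns the double sum into a single one:
\[
\operatorname{Cov}(\XN(f),\XN(g))=N^{-2m}\sum_{k,k'\in\mathbb Z^d}\overline{p(k)\widehat f(k)}\,p(k')\widehat g(k')\,\mathbf 1_{\{k\equiv k' \bmod N\}}\,N^{-d}\widehat\Gamma_N(k \bmod N).
\]
The aliasing terms $k\neq k'$ are controlled by the rapid decay of $\widehat f,\widehat g$, since $|k|\ge N/2$ or $|k'|\ge N/2$ there. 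For fixed $k$, homogeneity gives $N^{-2m}|p(k)|^2=|p(k/N)|^2N^{0}\cdot N^{-2m}N^{2m}$, so what is needed is the spectral scaling of Assumption~\ref{ass:spectral}, namely $N^{-d}\widehat\Gamma_N(k)\to\gamma(k)$ in the appropriate normalisation. Dominated convergence in $k$ is then applied with the dominating function supplied by the assumption. This is where a singularity of $\gamma$ at $k\to0$ is allowed as long as $|p(k)|^2\gamma(k)$ stays summable against Schwartz-decaying $\widehat f$; the $k=0$ mode is harmless for $m\ge1$ because $p(0)=0$.

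\textbf{Step 2 (higher cumulants).} For $r\ge3$, multilinearity gives
\[
\kappa_r\bigl(\XN(f_1),\dots,\XN(f_r)\bigr)=N^{-r(d/2+m)}\sum_{x_1,\dots,x_r}\kappa_r(\phi_{x_1},\dots,\phi_{x_r})\prod_j Pf_j(x_j/N).
\]
We bound each $|Pf_j|$ by $\|Pf_j\|_\infty\lesssim\|f_j\|_{C^m}$ and invoke Assumption~\ref{ass:cumulants}, which states that the normalised $r$-th cumulant sums are $o(1)$. If that assumption is phrased with the factor $N^{-rm}$ built in, the computation is immediate. If it is phrased as a summable tree-type bound $\sum_{x_2,\dots,x_r}|\kappa_r|\le C_rN^{\beta}$, the task is to check $N^{d-r(d/2+m)+\beta}\to0$ for $r\ge3$. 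Since every joint cumulant of a Gaussian limit vanishes beyond order two, the method of moments together with Cramér–Wold yields convergence of finite-dimensional distributions. Moment determinacy of the Gaussian ensures the moments identify the limit.

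\textbf{Step 3 (tightness in $H^{-s}$).} We write $\|\XN\|_{H^{-s}}^2=\sum_k(1+|k|^2)^{-s}|\XN(e_k)|^2$. Step~1 gives the uniform bound $\mathbb E|\XN(e_k)|^2\lesssim |p(k)|^2N^{-2m}\,N^{-d}|\widehat\Gamma_N(k \bmod N)|\lesssim(1+|k|)^{a}$ for some $a$ determined by Assumption~\ref{ass:spectral}. Hence $\sup_N\mathbb E\|\XN\|_{H^{-s'}}^2<\infty$ for $s'>a/2+d/2$. Compactness of the embedding $H^{-s'}\hookrightarrow H^{-s}$ for $s>s'$ then gives tightness. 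Combined with fidi convergence on the dense set of trigonometric polynomials, this identifies the limit as $\mathcal X^P$.

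I expect the main obstacle to be Step~1: obtaining a dominating bound on $N^{-2m}|p(k)|^2N^{-d}\widehat\Gamma_N(k)$ that is uniform in $N$, valid near $k=0$ where $\gamma$ may be singular, and valid near the Brillouin-zone boundary $|k|\sim N$ where the continuum symbol and the lattice sampling disagree. It is this bound that feeds both the covariance limit and the tightness estimate.
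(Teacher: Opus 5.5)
Your proposal is correct and follows essentially the paper's own three-step proof: an exact Fourier/aliasing formula for the covariance plus dominated convergence (your $(k,k')$ double sum with the congruence constraint is the paper's alias representation \eqref{eq:alias} regrouped by residue class), vanishing of higher cumulants via $|Pf_j|\le\|Pf_j\|_\infty$ and Assumption~\ref{ass:cumulants}, the method of moments with Cram\'er--Wold, and tightness from $\Var(\la\XN,e_k\ra)=|p(k)|^2N^{-2m}\widehat\Gamma_N(\la k\ra_N/N)\lesssim(1+|k|)^{2m+M}$ together with the compact embedding $H^{-s'}\hookrightarrow H^{-s}$. Two bookkeeping corrections: first, the hypothesis you actually need is $N^{-2m}\widehat\Gamma_N(k/N)\to\gamma(k)$, and your ``$N^{-d}\widehat\Gamma_N(k)\to\gamma(k)$'' drops the factor $N^{-2m}$; second, $p(0)=0$ only kills the single term $k=0$, while the other members $k\in N\Z^d\setminus\{0\}$ of the zero residue class carry the weight $N^{-2m}\widehat\Gamma_N(0)$, which may grow like $N^{M}$ and must be beaten by the zero-mode bound \eqref{eq:zero-mode} against the rapid decay of $\widehat f$, as the paper does explicitly (the uniform domination you flag as the main obstacle is otherwise simply supplied by \eqref{eq:domination}, since nonzero torus frequencies satisfy $|k|\ge1$).
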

The limiting covariance is determined by the product \(|p|^2\gamma\). When this product is constant, the limit is white noise projected onto the corresponding Fourier modes; see Corollary \ref{cor:white-noise}. A whole-space version of the previous theorem is formulated in Theorem~\ref{thm:wholespace}. We provide useful criteria to  check the cumulant assumption, for example through a tree--graph bound, see Equation \eqref{def:treegraph}, and recover well-known results in the literature such as the high-temperature Ising magnetization and Potts colour-field limits, without a separate cumulant computation for each model. Tree-graph bound estimates originate, for instance, through weighted dependency graphs \cite{feray_weighted_2018,dousse_weighted_2019}.
\begin{proposition}[Informal version of Proposition~\ref{prop:tree-graph}]
Suppose the fields satisfy the tree--graph bound (see \eqref{def:treegraph}). Then, for every $r\geq 3$ and fixed smooth test functions $f_1,\ldots, f_r$,
\[
\bigl|\operatorname{cum}(\mathcal X_N^P(f_1),\ldots,\mathcal X_N^P(f_r))\bigr|
\lesssim N^{d-r(d/2+m)}\longrightarrow0 \qquad \text{ as }N\to\infty.
\]
In particular, the higher-cumulant assumption of Theorem 1.1 is satisfied. 
\end{proposition}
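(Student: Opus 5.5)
By multilinearity of cumulants, for $r\geq 3$,
\[
\operatorname{cum}\bigl(\mathcal X_N^P(f_1),\ldots,\mathcal X_N^P(f_r)\bigr)
=N^{-r(d/2+m)}\sum_{x_1,\ldots,x_r\in\Lambda_N}\operatorname{cum}\bigl(\phi^N_{x_1},\ldots,\phi^N_{x_r}\bigr)\prod_{j=1}^r (Pf_j)(x_j/N).
\]
Each $Pf_j$ is smooth on $\mathbb T^d$, so $\|Pf_j\|_\infty\leq C_j<\infty$ uniformly in $N$. I therefore bound the absolute value by
\[
N^{-r(d/2+m)}\prod_j\|Pf_j\|_\infty\sum_{x_1,\ldots,x_r}\bigl|\operatorname{cum}(\phi^N_{x_1},\ldots,\phi^N_{x_r})\bigr|.
\]
This discards the oscillation of the test functions, which is enough for the claimed power.

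The key step is to show $\sum_{x_1,\ldots,x_r}|\operatorname{cum}(\phi^N_{x_1},\ldots,\phi^N_{x_r})|\leq C_r N^d$. I take the tree--graph bound \eqref{def:treegraph} in its standard form:
\[
|\operatorname{cum}(\phi_{x_1},\ldots,\phi_{x_r})|\leq C_r\sum_{T}\prod_{\{i,j\}\in T}w(x_i-x_j),
\]
where the sum runs over spanning trees $T$ on $\{1,\ldots,r\}$ and the weight satisfies $\sup_N\sum_{z\in\Lambda_N}w_N(z)<\infty$. This is the usual summability assumption, with distances taken on the torus. Since there are at most $r^{r-2}$ trees, it suffices to fix one tree $T$. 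Root it at vertex $1$ and sum the positions leaf by leaf. Each leaf $j$ with parent $i$ contributes $\sum_{x_j}w(x_j-x_i)\leq\|w\|_1$, independently of $x_i$, by translation invariance on the torus. Stripping all $r-1$ edges leaves the free sum over $x_1$, which gives $N^d$. Hence the total is at most $C_r r^{r-2}\|w\|_1^{r-1}N^d$.

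Combining the two steps gives the bound $\lesssim N^{d-r(d/2+m)}$. The exponent is $d(1-r/2)-rm\leq -d/2<0$ for $r\geq3$ and $m\geq0$, so the bound tends to $0$. It then remains to match this to Assumption~\ref{ass:cumulants}. If that assumption is phrased through normalized cumulants of the field, for instance $N^{-d}\sum_{x_2,\ldots,x_r}|\operatorname{cum}(\phi_0,\phi_{x_2},\ldots,\phi_{x_r})|$ bounded, or uniformly in the test functions with $\|Pf_j\|_\infty$ controlled by Sobolev norms, then the same leaf-stripping computation verifies it directly.

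The main obstacle is this last bookkeeping: the precise form of \eqref{def:treegraph} and of Assumption~\ref{ass:cumulants}, namely whether the weights must be summable uniformly in $N$ on the torus, and whether an $N$-dependent normalization such as $\operatorname{Var}$ scaling enters. If the tree weights are only bounded by a summable function on $\mathbb Z^d$, I will periodize, setting $w_N(z)=\sum_{y\in N\mathbb Z^d}w(z+y)$, so that $\|w_N\|_{\ell^1(\Lambda_N)}=\|w\|_{\ell^1(\mathbb Z^d)}$.
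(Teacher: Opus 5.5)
Your proposal is correct and follows essentially the same route as the paper's proof. You use multilinearity with the $\|Pf_j\|_\infty$ bounds, then sum a fixed spanning tree leaf by leaf, using translation invariance of $w_N$, to get $|\Lambda_N|\,\|w_N\|_{\ell^1}^{r-1}$. Cayley's formula $|\mathcal T_r|=r^{r-2}$ and the exponent check $d-r(d/2+m)<0$ for $r\ge3$ complete the argument.

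The bookkeeping you worry about is moot. Assumption~\ref{ass:cumulants} is literally $N^{-r(d/2+m)}\sum_{x_1,\dots,x_r}|\kappa_r^{(N)}|\to0$. Your intermediate bound $C_r N^d$ gives this directly under $\sup_N\|w_N\|_{\ell^1}<\infty$ and $N$-independent $D_r$, which are exactly the paper's hypotheses.
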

The main theorem conclusion holds once the spectral scaling assumption has also been verified. A straightforward application of it applies to statistical mechanics models in the Dobrushin uniqueness regime, see Equation \eqref{eq:dobrushin-condition-new}. For the translation-invariant Gibbs distributions of spin models, we show that the Dobrushin condition implies a tree–graph bound for the cumulants of bounded single-site observables.

\begin{corollary}[Informal version of Corollary~\ref{cor:dobrushin-fluctuation-fields}]
Let $(\sigma_x)_x$ denote a spin configuration and let $F^1,\ldots,F^n$ be bounded single-site observables. For real $f,g\in C_c^\infty(\mathbb R^d)$, call the centered observable 
\[
\phi_x^a=F^a(\sigma_x)-\mathbb E_\mu[F^a(\sigma_x)],
\qquad
\mathcal W_N^a(f)=N^{-d/2}\sum_{x\in\mathbb Z^d}\phi_x^a f(x/N).
\]
Then the Dobrushin condition for the spin model implies that
\[
(\mathcal W_N^1,\ldots,\mathcal W_N^n)
\xrightarrow{\mathrm d}\mathcal W_\chi
\quad\text{in }H^{-s}_{\mathrm{loc}}(\mathbb R^d;\mathbb R^n),\quad s>d/2,
\]
where $\mu$ is the unique Gibbs state and $\mathcal W_\chi$ is centered Gaussian white noise with
\[
\mathbb E[\mathcal W_\chi^a(f)\mathcal W_\chi^b(g)]
=\chi_{ab}\int_{\mathbb R^d}f(x)g(x)\,dx,
\qquad
\chi_{ab}=\sum_{z\in\mathbb Z^d}\operatorname{Cov}_\mu(\phi_0^a,\phi_z^b).
\]
The susceptibility matrix is finite and positive semidefinite; the limit may be degenerate.
\end{corollary}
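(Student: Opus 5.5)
The plan is to deduce the corollary from the whole-space CLT of Theorem~\ref{thm:wholespace}, in its vector-valued form with $P=\operatorname{Id}$ and $m=0$. Two inputs are needed: the spectral scaling assumption, with limiting profile equal to the constant matrix $\chi$, and the higher-cumulant assumption. The latter will come from a tree--graph bound of the type \eqref{def:treegraph}, which I will derive from the Dobrushin condition \eqref{eq:dobrushin-condition-new}, and then from Proposition~\ref{prop:tree-graph}. Vector-valued convergence will be handled by the Cramér--Wold device. For real coefficients $\lambda_a$, the scalar field $\sum_a\lambda_a\mathcal W_N^a$ is built from the bounded centered single-site observable $\sum_a\lambda_a\phi^a_x$. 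It therefore suffices to prove a scalar statement for an arbitrary bounded single-site observable, and then to identify the covariance $\sum_{a,b}\lambda_a\lambda_b\chi_{ab}$.

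\emph{Step 1: covariance decay.} Under Dobrushin uniqueness there is a matrix $C=(C_{xy})$ with $\sup_x\sum_y C_{xy}<1$. For finite-range interactions $C$ is translation invariant and has finite range. The Dobrushin--Shlosman / Föllmer comparison estimate gives
\[
|\operatorname{Cov}_\mu(F(\sigma_x),G(\sigma_y))|\le \|F\|_{\mathrm{osc}}\|G\|_{\mathrm{osc}}\,D_{xy},\qquad D=\sum_{n\ge0}C^n .
\]
Since $C$ has finite range and row sums at most $\alpha<1$, we get $D_{xy}\lesssim e^{-c|x-y|}$. Hence $\chi_{ab}=\sum_z\operatorname{Cov}_\mu(\phi^a_0,\phi^b_z)$ converges absolutely. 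It is positive semidefinite, because it is the limit of $N^{-d}\operatorname{Var}(\sum_{|x|\le N}\lambda\cdot\phi_x)\ge0$.

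\emph{Step 2: spectral assumption.} The Fourier transform of the covariance, rescaled at wavenumber $k/N$, is $\widehat\Gamma(k/N)=\sum_z\Gamma(z)e^{-2\pi i k\cdot z/N}$. Because $\Gamma$ is summable, this converges to $\widehat\Gamma(0)=\chi$ uniformly on compacts, and it is uniformly bounded. These are exactly the conditions of Assumption~\ref{ass:spectral}, with $\gamma\equiv\chi$.

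\emph{Step 3: the tree--graph bound for cumulants; this is the main obstacle.} I need, for every $r\ge3$,
\[
|\operatorname{cum}(\phi_{x_1},\dots,\phi_{x_r})|\le C_r\sum_{T}\prod_{\{i,j\}\in T}e^{-c|x_i-x_j|},
\]
where the sum runs over spanning trees $T$ on $\{1,\dots,r\}$. I would prove this by induction on $r$. Write the joint cumulant through the decomposition $\operatorname{cum}(X_1,\dots,X_r)=\sum$ over partial derivatives of $\log\E_\mu[e^{\sum t_iX_i}]$. Then view $\E[\,\cdot\,]$ as a tilted Gibbs measure $\mu_t$, obtained by adding the bounded single-site fields $t_iF(\sigma_{x_i})$. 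For small $t$ this still satisfies the Dobrushin condition with the same $C$, since single-site terms do not change $C$. Differentiating once in $t_r$ gives $\operatorname{Cov}_{\mu_t}(X_r,\sum_i t_iX_i)$-type expressions. The $(r-1)$-fold derivative of a covariance under $\mu_t$ is a sum of cumulants of lower order. These lower-order cumulants are bounded inductively, combined with the covariance bound $D$ of Step 1, which is uniform over tilts. This merges a new vertex into a tree through one exponentially decaying edge.

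If the induction becomes messy, the fallback is the known result that Dobrushin uniqueness implies a cluster/tree bound via a disagreement-percolation coupling. There one bounds $|\operatorname{cum}|$ by the probability that the disagreement clusters connect all $x_i$, which is at most a tree sum of $D$-entries.

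\emph{Step 4: conclusion.} Once the tree bound holds, Proposition~\ref{prop:tree-graph} gives cumulants of order $N^{d-rd/2}\to0$ for $r\ge3$. Summing a tree over the $r-1$ free vertices yields a factor $N^d$, and the test functions are compactly supported. Theorem~\ref{thm:wholespace} with $s>d/2$ and $|p|^2\gamma=\chi$ then gives convergence in $H^{-s}_{\mathrm{loc}}$ to white noise with covariance $\chi_{ab}\int fg$. Cramér--Wold, together with tightness of each component, upgrades this to joint convergence. Degeneracy is allowed simply because $\chi$ may be singular.
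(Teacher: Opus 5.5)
Your outer architecture matches the paper's: exponential decay of $\mathbf D=\sum_j\mathbf C^j$ from finite range and $c(\mathcal G)<1$, the spectral hypotheses with $\gamma\equiv\chi$ and $M=0$, leaf-peeling to get $O(N^d)$ summed cumulants, and then Theorem~\ref{thm:wholespace} with $m=0$. But Step~3, which you rightly call the crux, does not work as written.

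\textbf{Step~3 does not deliver a tree bound.}
\begin{itemize}
\item Derivatives in the tilt parameters raise the order of $\mu_t$-cumulants; they do not lower it. Indeed $\partial_{t_3}\cdots\partial_{t_r}\Cov_{\mu_t}(X_1,X_2)|_{t=0}=\cum_\mu(X_1,\dots,X_r)$, so there is no lower-order quantity to induct on.
\item A bound on $\Cov_{\mu_t}$ that is uniform over real tilts controls none of its $t$-derivatives. Cauchy estimates would require control under complex tilts, which the real-measure comparison argument does not give. (Single-site tilts also do change the $c_{xy}$, though only by a factor $1+O(|t|)$; that part is repairable.)
\item Your inductive step needs the new edge factor to \emph{multiply} a tree bound on the other $r-1$ points. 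Nothing in your sketch produces such a product. A tilt at $x_r$ combined with the F\"ollmer estimate gives only $|\cum_\mu(X_1,\dots,X_r)|\le C_r\sum_{i<r}\mathbf D_{x_ix_r}$, one decaying edge from one point to the rest. Symmetrizing gives decay only in the distance of the most isolated point.
\item That weaker bound does not suffice. For $r=4$ and configurations with $x_1\approx x_2$, $x_3\approx x_4$ it gives no decay at all. It therefore only bounds $\sum_{\Lambda_N^4}|\kappa_4|$ by $O(N^{2d})$, and $N^{-2d}\cdot N^{2d}\not\to0$ in \eqref{eq:ws-cum}.
\end{itemize}

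\textbf{The missing idea is a cut bound for every bipartition $I\sqcup J$ of the indices.} The paper obtains it as follows.
\begin{enumerate}
\item Apply the F\"ollmer estimate to the block products $\prod_{i\in B}\bar F_i(\sigma_{x_i})$, not to single-site functions. This gives $|\mathfrak m_B-\mathfrak m_{B\cap I}\mathfrak m_{B\cap J}|\le C_re^{-\tau\operatorname{dist}(x_I,x_J)}$.
\item Note that the moment--cumulant polynomial vanishes at the factorized moments, since these are the moments of independent copies of the two blocks. Telescoping then yields $|\cum|\le C_re^{-\tau\operatorname{dist}(x_I,x_J)}$.
\item Cut a minimum spanning tree at its longest edge. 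This gives a bipartition with $\operatorname{dist}(x_I,x_J)\ge\mst(x_1,\dots,x_r)/(r-1)$, hence $|\cum|\le C_re^{-\tau\mst/(r-1)}$.
\item Dominate this edge by edge by the fixed summable weight $(1+|x-y|_1)^{-(d+1)}$.
\end{enumerate}
Your disagreement-percolation fallback is only a sketch. Bounding joint cumulants by connection probabilities of disagreement clusters is not a standard consequence of the Dobrushin condition, and it would need its own proof.

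\textbf{A smaller point on the vector reduction.} Reducing to the single observable $\lambda\cdot\phi$ controls only combinations $\sum_a\lambda_a\mathcal W_N^a(f)$ with a common test function. These do not determine the joint law. You need the mixed cumulants $\cum(\phi^{a_1}_{x_1},\dots,\phi^{a_r}_{x_r})$, which is why the paper states the tree bound for arbitrary $F_1,\dots,F_r$.
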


\medskip

The theory covers two regimes: finite susceptibility, where the differentiated microscopic observable carries the limit, and infinite susceptibility, where an operator of order $m>0$ compensates the infrared singularity of the spectral density and the macroscopic observable \eqref{eq:def:PN} is the nondegenerate object. In this last case,  Proposition~\ref{prop:trichotomy} makes precise, at the level of the covariance, the observation that differentiation removes
correlation. For a field whose spectral density behaves as $c_0|\theta|^{-\nu}$ near the origin, with $0\leq\nu<d$, we distinguish
three regimes according to the order $m$ of the differential operator.
Under the normalization $N^{-d/2-m}$, the variance vanishes when
$2m>\nu$, converges to a finite nonzero limit when $2m=\nu$, and grows
like $N^{\nu-2m}$ when $2m<\nu$, provided $Pf\not\equiv0$.
Notably, in the matching case the limiting covariance has multiplier
$c_0|p(\xi)|^2|\xi|^{-2m}$, which is bounded but need not be constant. This identifies when the differential order balances the spectral
singularity and the normalization of our main theorem gives nontrivial
fluctuations. When differentiation is insufficient, the alternative
normalization $N^{-(d+\nu)/2}$ restores a finite covariance limit;
for Gaussian fields, the resulting limit is a differentiated fractional
Gaussian field. We stress that this balance does not by itself imply
Gaussianity: in the matching regime, the higher-cumulant condition
allows us to apply the whole-space CLT of
Theorem~\ref{thm:wholespace}, whereas non-central limits may arise
without it; see Remark~\ref{rem:noncentral}.

\medskip

The novel contribution is threefold.  First, the limit is established at the level of the random
field rather than for a single fixed
linear statistic, i.e., we provide a distribution-valued central limit theorem.  Second, it isolates a unifying mechanism: the limiting covariance
factorizes in Fourier space as $|p(k)|^2\gamma(k)$, separating the deterministic effect of the operator $P$ from the probabilistic infrared profile $\gamma$ of the field, so that a single theorem produces white noise and gradient free fields according to how these two factors combine.  Third, it makes quantitative the heuristic that differentiation removes correlation, each derivative multiplies the spectral density by $|k|^2$ and so suppresses the low-frequency mass responsible for long-range correlation, recovering the white-noise limit for the gradient of the Gaussian free field
\cite{newman_gaussian_2018} as a special case (the same heuristic underlies the decorrelation
of the squared gradient observed in \cite{cipriani_properties_2023}). The method is deliberately elementary. Once the covariance scaling
has been identified and the spectral bounds verified, the remaining
model-specific input is the higher-cumulant hypothesis, see
Assumption~\ref{ass:cumulants}. For many models, suitable cumulant
bounds are already available through weighted dependency graphs
\cite{feray_weighted_2018,dousse_weighted_2019}, cluster expansions
\cite{kotecky_cluster_1986}, or the Breuer--Major diagram estimate
\cite{breuer_central_1983}. Once these inputs are available, the
distribution-valued and differentiated forms of the CLT follow
without FKG inequalities, Malliavin calculus, or a martingale
approximation.

\subsection{Related literature}\label{sec:literature}

It is known that the limiting variance of a fluctuation observable should be expressible through the spectral density of the covariance, and governed by its behaviour near zero frequency: Bierm\'e, Bonami and Le\'on \cite{bierme_central_2011} prove central limit theorems for quadratic variations of Gaussian fields with the limiting variance stated directly in terms of the spectral density and its asymptotically self-similar low-frequency profile; the Tauberian and Abelian theorems of Leonenko and Olenko \cite{leonenko_tauberian_2013} (see also the monograph \cite{leonenko_limit_1999}) make precise the correspondence between covariance decay at infinity and the spectral-density singularity at the origin of the type entering Section~\ref{sec:paradigm}. Moreover, the classical non-central limit theorems \cite{dobrushin_non-central_1979,taqqu_convergence_1979} are constructed from this precise low-frequency singularity.  Our contribution within this lineage is twofold: on the one hand, the factorized multiplier $|p(k)|^2\gamma(k)$ shows how the differential operator $P$ enters as the deterministic Fourier symbol $|p|^2$ multiplying the probabilistic infrared profile $\gamma$, the low-frequency scaling limit of the spectral density. On the other hand, to the authors' knowledge, this is also the first time where the spectral density of the covariance is applied to random fields on the lattice. In a complementary direction, when the spectral measure is not absolutely continuous the picture changes qualitatively: Gass, Peccati and Stecconi \cite{gass_scars_2026} show that  observables of Berry's singular-spectrum random-wave model fall instead into a fractional-Gaussian-field universality class, on the far side of the absolutely-continuous-spectrum hypothesis used here.

The volume average ($P=\mathrm{Id}$) of a stationary field is a classical observable of
statistical mechanics.  Newman \cite{newman_normal_1980} proved that, for stationary fields
satisfying the FKG inequalities, finite susceptibility implies a central limit theorem for
block averages; our results are a field-level (distribution-valued) counterpart valid for
differential observables, at the cost of a cumulant hypothesis.  Newman and Wu
\cite{newman_gaussian_2018} proved white-noise limits for the fluctuation field of the
classical XY model and of a class of gradient models as the temperature tends to zero with
the mesh; Example~\ref{ex:gff-gradient} recovers the exactly Gaussian core of this phenomenon.  For
$\nabla\varphi$-interface models with uniformly convex potential, the scaling limit is a
(homogenized) continuum Gaussian free field
\cite{naddaf_homogenization_1997,giacomin_equilibrium_2001,miller_fluctuations_2011,armstrong_c2_2022,armstrong_quantitative_2024},
with extensions to non-convex potentials
\cite{biskup_scaling_2011,cotar_existence_2012,buchholz_phase_2021}, local limit theorems
\cite{wu_local_2025} and strong central limit theorems \cite{conlon_strong_2013},
squares of gradient fields
\cite{deuschel_ray-knight_2024,cipriani_properties_2023}, and membrane models
\cite{thoma_thermodynamic_2023}.

For non-linear functionals of Gaussian fields, central limit theorems go back to
Breuer--Major \cite{breuer_central_1983} and Giraitis--Surgailis \cite{giraitis_clt_1985},
with non-central limits in the long-memory regime due to Dobrushin--Major
\cite{dobrushin_non-central_1979} and Taqqu \cite{taqqu_convergence_1979}; modern proofs
use Wiener chaos and the fourth-moment theorem
\cite{nourdin_normal_2012,peccati_wiener_2011,janson_gaussian_1997}, see
\cite{coppini_central_2025,coppini_wick_2025} for the lattice-field formulation and
\cite{mcauley_limit_2026} for recent limit theorems for non-local functionals.  Hermite
polynomial functionals of such fields satisfy our cumulant hypothesis under the Breuer--Major
summability condition, through the diagram estimate, but not the tree--graph bound of
\S\ref{sec:treegraph} with the covariance weight (Remark~\ref{rem:nonlinear-gaussian});
we do not pursue them here, our emphasis being a distribution-valued limit for
differential observables of not necessarily Gaussian stationary fields.  The underlying
tool---the method of moments and cumulants for limit theorems of random fields---is
classical \cite{peccati_wiener_2011,janson_gaussian_1997}; what is new here is its
field-level formulation for the differential observable \eqref{eq:def:PN} and the
resulting factorized covariance $|p|^2\gamma$.

A further point, which we regard as one of the main ones, is quantitative: under the
tree--graph criterion the cumulants of the fluctuation field obey the explicit all-orders
decay $|\cum(\XN(f_1),\dots,\XN(f_r))|\lesssim N^{\,d-r(d/2+m)}$ (see Equation \eqref{eq:cum-linear}
below), valid for every differential order $m\ge0$; under factorial control of the constants it yields, for the order-zero observable, a Berry--Esseen rate for each fixed test function, we refer to Remark~\ref{rem:berry-esseen}.  An explicit cumulant decay rate at this level of generality---for
differential observables of arbitrary order, at the level of the field---does not seem to have been isolated before. Finally, for the Ising model, cumulant bounds of tree--graph type follow from the weighted dependency graphs of Dousse--F\'eray
\cite{dousse_weighted_2019}, which reach beyond the Dobrushin regime used in \S\ref{sec:dobrushin-tree-graph}; see \cite{aizenman_sharpness_1987,duminil-copin_new_2016} for the sharpness of the
phase transition.

When cumulants are unavailable or do not vanish, Gaussianity may still hold through routes
outside the present framework: martingale approximations, or the FKG-based criterion of Newman
\cite{newman_normal_1980} for associated fields of finite susceptibility.  These are genuinely
distinct mechanisms, and in the \emph{long-memory} regime---by which we mean that the
spectral density is singular at the origin; this implies in particular that the covariance is
not absolutely summable and the susceptibility diverges---they may fail altogether, the limit being non-central, see Section~\ref{sec:paradigm} for more on this.

\subsection{Organization}\label{sec:organization}
The rest of the paper is organized as follows. Section~\ref{sec:setting} gives the notation and the full statements of the main results. Section~\ref{sec:proofs} contains the proofs. Section~\ref{sec:duality} proves the duality between macroscopic differential and microscopic difference operators. Section~\ref{sec:applications} gives the tree--graph and Dobrushin criteria and their applications. Finally, Section~\ref{sec:paradigm} discusses differentiation in the long-memory regime.

\section{Setting and main results}\label{sec:setting}

\subsection{Notation and conventions}\label{sec:notation}
Throughout, $d\ge1$.  We work on the continuous torus $\T^d:=\R^d/\Z^d$ and approximate it
by the discrete torus
\[
\TN:=(\Z/N\Z)^d,\qquad
\LN:=\{0,1,\dots,N-1\}^d,
\]
so that $|\LN|=N^d$ and the embedding $x\mapsto x/N$ maps $\LN$ onto the mesh-$1/N$
grid of $\T^d$.
We identify the dual group of $\TN$ with $\widehat\TN:=(-N/2,N/2]^d\cap\Z^d$, 
and for $k\in\Z^d$ write $\la k\ra_N\in\widehat\TN$ for the representative of $k\bmod N$, so that
$|\la k\ra_N|\le|k|$ and, as a sufficient condition,
$\la k\ra_N=k$ whenever $|k|\le N/2$. All lattice difference operators are taken with periodic boundary conditions on $\TN$, hence
summation by parts is exact:
\begin{equation}\label{eq:exact-sbp}
\sum_{x\in\LN}(\nabla_{N,j}u)_x\,v_x=-\sum_{x\in\LN}u_x\,(\nabla_{N,j}^*v)_x,
\qquad
(\nabla_{N,j}^*v)_x:=v_{x}-v_{x-e_j}.
\end{equation}
For two functions $f,g: \mathbb{N}\to\mathbb{R}$ we denote $f(N) \asymp g(N)$ to mean $c \le \vert{}f(N)/g(N)\vert{} \le C$ for constants $0 < c < C < \infty$  and for any $N$ sufficiently large.

For random variables $X_1,\dots,X_r$ with finite moments, the joint cumulant is
\[
\cum(X_1,\dots,X_r)=\sum_{\pi\in\mathcal P_r}(|\pi|-1)!\,(-1)^{|\pi|-1}\prod_{B\in\pi}\E\Big[\prod_{i\in B}X_i\Big],
\]
$\mathcal P_r$ being the partitions of $\{1,\dots,r\}$.  For a stationary field $\phi^N$
we write $\kappa_r^{(N)}(x_1,\dots,x_r):=\cum(\phi^N_{x_1},\dots,\phi^N_{x_r})$.  Cumulants
are multilinear, vanish (for $r\ge3$) on Gaussian vectors, and a centered law is Gaussian
if and only if all cumulants of order $\ge3$ vanish; see \cite{peccati_wiener_2011,shiryaev_probability-2_2019}.

For $f\in C^\infty(\T^d)$ and $k\in\Z^d$ we set
\[
\widehat f(k):=\int_{\T^d}f(x)\,e^{-2\pi i k\cdot x}\dd x,
\qquad
f(x)=\sum_{k\in\Z^d}\widehat f(k)\,e^{2\pi i k\cdot x},
\]
and write $e_k(x):=e^{2\pi i k\cdot x}$, so that $\{e_k\}_{k\in\Z^d}$ is an orthonormal
basis of $L^2(\T^d)$ and $\la f,e_k\ra_{L^2(\T^d)}=\widehat f(k)$.  Because $f$ is
smooth, $\widehat f$ decays faster than any polynomial:
\begin{equation}\label{eq:rapid-decay}
\forall M>0\quad \sup_{k\in\Z^d}(1+|k|)^M|\widehat f(k)|<\infty.
\end{equation}
Let $P$ be a scalar-valued constant-coefficient differential operator of exact order $m\ge0$,
\[
P=\sum_{|\alpha|=m}c_\alpha\,\partial^\alpha,
\qquad c_\alpha\in\R,
\qquad
\partial^\alpha:=\partial_1^{\alpha_1}\cdots\partial_d^{\alpha_d},
\quad\alpha\in\N^d,\quad |\alpha|=\textstyle\sum_j\alpha_j.
\]
Basic examples are
\begin{equation*}
\begin{split}
&P=\mathrm{Id}\ (m=0),
\qquad
P=\partial_v:=v\cdot\nabla\ (m=1,\ v\in\R^d),\\
&P=\Delta\ (m=2),
\qquad
P=(-\Delta)^{m/2}\ (m\ \text{even}),
\end{split}
\end{equation*}
and more generally any scalar homogeneous differential operator of order $m$.
The Fourier symbol is equal to
\begin{equation}\label{eq:symbol}
p(\xi):=\sum_{|\alpha|=m}c_\alpha\,(2\pi i\,\xi)^\alpha,
\qquad\xi\in\R^d,
\qquad\text{so that}\quad
\widehat{Pf}(k)=p(k)\,\widehat f(k),\ \ k\in\Z^d.
\end{equation}
Note that $p$ is homogeneous of degree $m$: $p(t\xi)=t^m p(\xi)$, and there is $C_p>0$
with $|p(\xi)|\le C_p|\xi|^m$.  We write $Z(p):=\{k\in\Z^d:p(k)=0 \}$; for $m\ge1$ one has
$0\in Z(p)$ since $p(0)=0$.

For $j=1,\dots,d$ let
$\nabla_{N,j}u_x:=u_{x+e_j}-u_x$ denote the forward difference on the discrete torus (where
periodicity is exact), set $\nabla_N^\alpha:=\prod_{j=1}^d\nabla_{N,j}^{\alpha_j}$, and
define the associated discrete operator
\begin{equation}\label{eq:def:PNdisc}
P_N:=\sum_{|\alpha|=m}c_\alpha\,\nabla_N^\alpha .
\end{equation}

For $s\in\R$ let
\[
H^{s}(\T^d):=\Big\{u\in\mathscr D'(\T^d):\ \|u\|_{H^s}^2:=\sum_{k\in\Z^d}(1+|k|^2)^{s}|\widehat u(k)|^2<\infty\Big\}
\]
($\mathscr D'(\T^d)$ the space of distributions on the torus, the topological dual of $C^\infty(\T^d)$).
The pairing of $u\in H^{-s}$ with $f\in C^\infty(\T^d)$ is
$\la u,f\ra:=\sum_k\widehat u(k)\,\overline{\widehat f(k)}$, conjugate-linear in $f$.

All fluctuation fields in this paper are real random distributions, and the defining formula \eqref{eq:def:PN} is primarily a statement about real test functions; scomplex test functions enter only through the Fourier modes $e_k$.  We fix the
conventions once and for all.  Since $P$ has real coefficients,
$\XN(\bar f)=\overline{\XN(f)}$, and we identify the random distribution with the
functional \eqref{eq:def:PN} through
\[
\la\XN,f\ra:=\XN(\bar f),
\]
which is conjugate-linear in $f$ and agrees with $\XN(f)$ for real $f$, consistently
with the pairing above.  For complex square-integrable random variables we write
\[
\Cov(A,B):=\E\big[\overline{(A-\E A)}\,(B-\E B)\big],
\]
conjugate-linear in the first argument, and $\Var(A):=\Cov(A,A)=\E|A-\E A|^2$.  With
these conventions every covariance formula below is Hermitian and reduces, for real
test functions, to the usual real covariance; statements about convergence in law
(Lemma~\ref{lem:fdd}) are proved for real test functions and extended to complex ones
by splitting into real and imaginary parts.

For a centered stationary field $(\phi_x)_{x\in\Z^d}$ let
$\Gamma_\phi(z):=\Cov(\phi_0,\phi_z)$ denote the covariance function.  If $\Gamma_\phi\in\ell^1(\Z^d)$, its spectral
density is well defined and given by
\begin{equation}
\widehat\Gamma_\phi(\theta):=\sum_{z\in \Z^d}\Gamma_\phi(z)e^{-2\pi i\theta\cdot z}, \qquad \theta\in\T^d.
\end{equation}  
The value
\begin{equation}\label{eq:susceptibility}
\chi_\phi:=\widehat\Gamma_\phi(0)=\sum_{z\in \Z^d}\Gamma_\phi(z)
\end{equation}
is the \emph{susceptibility} of the field; finiteness of $\chi_\phi$ is the classical
hypothesis under which the volume average satisfies a central limit theorem
\cite{newman_normal_1980}.  For an operator of order $m\ge1$, our main theorem does
not require finite susceptibility: the zero mode of the covariance is precisely compensated by 
the differentiation effect. We refer to Theorem~\ref{thm:main} and the discussion below
for more on this aspect.

\subsection{Two hypotheses and the main theorem}\label{sec:hypotheses}

We collect the standing assumptions on the field $(\phi^N_x)$.  On the finite torus $\TN$, the
spectral measure of $\Gamma_N$ is atomic and we assume it is the restriction of a fixed function $\widehat\Gamma_N$ on $\T^d$, i.e.
\begin{equation}\label{eq:disc-bochner}
\Gamma_N(z)=N^{-d}\sum_{k\in\widehat\TN}\widehat\Gamma_N(k/N)\,e^{2\pi i k\cdot z/N},\qquad z\in\TN ,
\end{equation}
the prefactor $N^{-d}$ making this the Riemann-sum discretization of $\Gamma_N(z)=\int_{\T^d}\widehat\Gamma_N(\theta)e^{2\pi i\theta\cdot z}\dd\theta$ over the
$N^d$ dual frequencies. Such hypothesis is natural in the torus setting, see, e.g., Example \ref{ex:gff-gradient}, where $\widehat\Gamma$ is explicit.

\begin{assumption}[Infrared scaling of the covariance]\label{ass:spectral}
There exist $\gamma:\Z^d\setminus\{0\}\to[0,\infty)$ and constants $C,M\ge0$
such that, for every $k\in\Z^d\setminus\{0\}$,
\begin{equation}\label{eq:spectral-scaling}
\lim_{N\to\infty}N^{-2m}\,\widehat\Gamma_N(k/N)=\gamma(k),
\end{equation}
and, for all $N\ge1$ and all $k\in\widehat\TN$ with $1\le|k|$,
\begin{equation}\label{eq:domination}
N^{-2m}\,\widehat\Gamma_N(k/N)\le C\,(1+|k|)^{M}.
\end{equation}
Here $m$ is the order of the operator $P$ fixed in \eqref{eq:def:PN}.
At the zero mode we assume: if $m=0$, the limit \eqref{eq:spectral-scaling} holds also at
$k=0$ with $\gamma(0)<\infty$ (finite susceptibility); if $m\ge1$,
\begin{equation}\label{eq:zero-mode}
\widehat\Gamma_N(0)\le C\,N^{2m+M},
\end{equation}
and we set $\gamma(0):=0$ by convention, so that sums over all $k\in\Z^d$ in the limit
formulas below are well defined.
\end{assumption}

The assumption constrains $\widehat\Gamma_N$ only at the sampled (integer) frequencies,
which is all the finite-torus fields determine; the whole-space variant
(Theorem~\ref{thm:wholespace}) carries the analogous hypotheses at real frequencies.
For $m\ge1$ no limiting value of $\widehat\Gamma_N$ at zero is required but the polynomial control \eqref{eq:zero-mode}.  In particular,  $\gamma(k)$ may blow up as $k\to0$. For $m=0$ it is instead the finite-susceptibility hypothesis $\gamma(0)<\infty$ of Assumption~\ref{ass:spectral} that governs.
The growth exponent $M$ in \eqref{eq:domination} need not be sharp; it only controls the Sobolev regularity of the limit.  When $\gamma$ is locally bounded (e.g.\ $P=\mathrm{Id}$ and a bounded spectral
density), one may take $M=0$.  In this regime, the profile $\gamma$ need not be constant: $\gamma(k)=\lim_N\widehat\Gamma_N(k/N)$ is constant precisely when $\widehat\Gamma_N$ converges continuously at the origin (the classical finite-susceptibility case, where $\gamma\equiv\chi$); a bounded spectral density with a directional limit
$\widehat\Gamma(\theta)\to\Psi(\theta/|\theta|)$ as $\theta\to0$ yields the direction-dependent profile $\gamma(k)=\Psi(k/|k|)$.

\begin{assumption}[Vanishing higher cumulants]\label{ass:cumulants}
For every $r\ge3$,
\begin{equation}\label{eq:higher-cumulants}
\lim_{N\to\infty}N^{-r(d/2+m)}\sum_{x_1,\dots,x_r\in\LN}\big|\kappa_r^{(N)}(x_1,\dots,x_r)\big|=0.
\end{equation}
\end{assumption}

We are ready to state our main theorem.

\begin{theorem}\label{thm:main}
Let $P$ be a scalar constant-coefficient differential operator of exact order $m\ge0$
with symbol $p$, and let $(\phi^N_x)$ be centered and stationary, with finite moments of
all orders, satisfying Assumptions~\ref{ass:spectral} and~\ref{ass:cumulants}.  Then, for every
$s>\tfrac d2+m+\tfrac M2$, the random distributions $\XN$ defined in \eqref{eq:def:PN}
converge in law in $H^{-s}(\T^d)$ to the centered Gaussian random distribution
$\mathcal X^P$ characterized by
\begin{equation}\label{eq:limit-cov}
\E\big[\la\mathcal X^P,f\ra\,\overline{\la\mathcal X^P,g\ra}\big]
=\sum_{k\in\Z^d}\overline{\widehat f(k)}\,\widehat g(k)\,|p(k)|^2\,\gamma(k),
\qquad f,g\in C^\infty(\T^d).
\end{equation}
\end{theorem}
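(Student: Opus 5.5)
The proof has three steps: (i) compute the covariance of $\XN$ in Fourier space and show it converges to \eqref{eq:limit-cov}; (ii) use Assumption~\ref{ass:cumulants} to show that all cumulants of order $r\ge3$ vanish, which gives finite-dimensional convergence by the method of moments and cumulants; (iii) prove tightness in $H^{-s}(\T^d)$ from a uniform second-moment bound on the Fourier modes.

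\emph{Step (i): covariance.} By stationarity and \eqref{eq:disc-bochner}, for $f,g\in C^\infty(\T^d)$,
\[
\Cov(\la\XN,f\ra,\la\XN,g\ra)=N^{-2m}\sum_{k\in\widehat\TN}\widehat\Gamma_N(k/N)\,\overline{F_N(k)}\,G_N(k),
\]
where $F_N(k)=N^{-d}\sum_x (Pf)(x/N)e^{-2\pi i k\cdot x/N}=\sum_{j\in\Z^d}p(k+jN)\widehat f(k+jN)$ by Poisson summation, and $G_N$ is defined likewise from $g$.

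For fixed $k$, the aliasing terms with $j\neq0$ are $O(N^{-L})$ for every $L$, by \eqref{eq:rapid-decay} together with $|p(\xi)|\le C_p|\xi|^m$. Uniformly in $k\in\widehat\TN$ we have $|F_N(k)|\lesssim(1+|k|)^{-L}$, since $|k+jN|\gtrsim|k|+|j|N$ there. Combining this with the domination \eqref{eq:domination}, the mode-wise limit \eqref{eq:spectral-scaling} and dominated convergence gives the sum over $k\neq0$ in \eqref{eq:limit-cov}.

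The zero mode must be handled separately.
\begin{itemize}
\item If $m=0$, the limit $\gamma(0)$ is assumed, so the zero mode simply contributes its limiting term.
\item If $m\ge1$, then $p(0)=0$, so $F_N(0)=\sum_{j\ne0}p(jN)\widehat f(jN)=O(N^{-L})$ for every $L$. Together with \eqref{eq:zero-mode}, the zero-mode contribution vanishes, matching the convention $\gamma(0)=0$.
\end{itemize}

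\emph{Step (ii): higher cumulants and finite-dimensional laws.} By multilinearity,
\[
\cum(\XN(f_1),\dots,\XN(f_r))=N^{-r(d/2+m)}\sum_{x_1,\dots,x_r}\kappa_r^{(N)}(x_1,\dots,x_r)\prod_{i=1}^r(Pf_i)(x_i/N).
\]
This is bounded by $\prod_i\|Pf_i\|_\infty$ times the quantity in \eqref{eq:higher-cumulants}, so it tends to $0$ for every $r\ge3$. Step (i) also yields the limit of all second cumulants. Since the centred Gaussian law is determined by its moments, the method of moments and cumulants gives convergence of $(\XN(f_1),\dots,\XN(f_n))$ to the Gaussian vector with covariance \eqref{eq:limit-cov}, first for real test functions. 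This is the content the paper refers to as Lemma~\ref{lem:fdd}. Complex test functions follow by splitting into real and imaginary parts. Positive semidefiniteness of the limit is automatic, because it is a limit of covariances.

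\emph{Step (iii): tightness and identification.} Take $f=e_k$ in Step (i). For $k\in\Z^d$, the aliasing estimate gives
\[
\E|\la\XN,e_k\ra|^2\lesssim|p(k)|^2(1+|k|)^M+\text{(aliasing)}\lesssim(1+|k|)^{2m+M},
\]
uniformly in $N$.

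This is the step I expect to be the main obstacle. For $e_k$ the Fourier transform is not rapidly decaying in a usable uniform sense, and $e_k$ on the grid aliases to the mode $\la k\ra_N$. Hence $|F_N(\la k\ra_N)|=|p(k)|$ exactly, and I must bound $N^{-2m}\widehat\Gamma_N(\la k\ra_N/N)$ by $C(1+|\la k\ra_N|)^M\le C(1+|k|)^M$, using \eqref{eq:domination} together with $|\la k\ra_N|\le|k|$. When $\la k\ra_N=0$ with $k\neq0$, the factor $N^{-2m}\widehat\Gamma_N(0)\le CN^{M}$ from \eqref{eq:zero-mode} is at most $C|k|^M$, because then $|k|\ge N$.

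With this bound, $\E\|\XN\|_{H^{-s'}}^2=\sum_k(1+|k|^2)^{-s'}\E|\la\XN,e_k\ra|^2$ is uniformly bounded for $\tfrac d2+m+\tfrac M2<s'<s$. Since the embedding $H^{-s'}\hookrightarrow H^{-s}$ is compact, Chebyshev's inequality gives tightness in $H^{-s}$. Every subsequential limit has the finite-dimensional laws identified in Step (ii), and these determine a law on $H^{-s}$ because $C^\infty$ separates points. Therefore $\XN\to\mathcal X^P$ in law in $H^{-s}(\T^d)$.
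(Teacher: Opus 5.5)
Your proposal is correct and follows the paper's proof essentially step for step: the exact aliased Fourier formula for the covariance with a separate zero-mode estimate and dominated convergence, vanishing higher cumulants plus the method of moments for the finite-dimensional laws, and tightness from the exact identity $\Var(\la\XN,e_k\ra)=|p(k)|^2N^{-2m}\widehat\Gamma_N(\langle k\rangle_N/N)$ combined with the compact embedding $H^{-s'}\hookrightarrow H^{-s}$ and Prokhorov. The only small omission is that for $m=0$ the residual modes $k\equiv0 \bmod N$ are controlled by the boundedness of the convergent sequence $\widehat\Gamma_N(0)$ rather than by \eqref{eq:zero-mode}; the paper handles this case in the same way.
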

The theory tackles two genuinely different regimes, according to the zero-frequency
behaviour of the covariance. When the spectral density is
singular at the origin at the rate that makes
$N^{-2m}\widehat\Gamma_N(\theta/N)\to\gamma(\theta)$ a nontrivial profile, the order-zero
volume average is not variance-normalized (its variance diverges and the susceptibility is
infinite), but the macroscopic order-$m$ observable $\XN$ of \eqref{eq:def:PN} is nondegenerate: the operator multiplies
the spectrum by $|p(k)|^2$, which for elliptic homogeneous $P$ is comparable to
$|k|^{2m}$ and suppresses exactly the divergent low-frequency mass (for directional or degenerate operators the suppression occurs only away from the set $\{p=0\}$).  This is the
regime of Theorem~\ref{thm:main} for $m\ge1$, where Assumption~\ref{ass:spectral} requires
no finite susceptibility and controls the zero mode only through the growth bound
\eqref{eq:zero-mode}; it covers the gradient free field (see Example~\ref{ex:gff-gradient} below) and the long-memory regime of Section~\ref{sec:paradigm}, and it is the mechanism behind
gradient observables of $\nabla\varphi$-interface models and behind the low-temperature
limits of Newman and Wu \cite{newman_gaussian_2018}.
When instead the spectral densities are uniformly finite near the zero mode, i.e.\ for
some $\delta>0$
\[
\sup_{N}\ \sup_{|\theta|\le\delta}\widehat\Gamma_N(\theta)<\infty,
\]
and $\widehat\Gamma_N(0)\to\chi$, so that the susceptibility \eqref{eq:susceptibility} is
finite in the limit, one has
$N^{-2m}\widehat\Gamma_N(\theta/N)\to0$ for $m\ge1$, so the only non-trivial limit is given by $m=0$ and $P$ being the identity operator.

We work on the torus so that Fourier series keep every step exact and elementary.  In
the applications the field lives on all of $\Z^d$ and the natural test functions are
$f\in C_c^\infty(\R^d)$; this whole-space setting is the more common one in the
literature.  We now state the whole-space form of the theory as a theorem in its own
right; its proof is given in \S\ref{sec:ws-proof}.

For each $N$ let $(\phi^N_x)_{x\in\Z^d}$ be centered and stationary with finite moments
of all orders (a fixed field $\phi^N=\phi$ is allowed, but so is a genuine triangular
array), whose covariance $\Gamma_N(z):=\Cov(\phi^N_0,\phi^N_z)$ has a spectral density
$S_N\in L^1(\T^d)$,
\[
\Gamma_N(z)=\int_{\T^d}S_N(\theta)\,e^{2\pi i\theta\cdot z}\dd\theta,\qquad z\in\Z^d.
\]
The relevant observable is
\begin{equation}\label{eq:def:PN-ws}
\XN(f):=N^{-d/2-m}\sum_{x\in\Z^d}\phi^N_x\,(Pf)(x/N),
\qquad f\in C_c^\infty(\R^d),
\end{equation}
the sum being finite: $(Pf)(x/N)\ne0$ forces $x\in N\supp(Pf)\cap\Z^d$.  Set
\begin{equation}\label{eq:ws-GN}
Q_N:=(-N/2,N/2]^d,\qquad
G_N(\xi):=\mathbf 1_{Q_N}(\xi)\,N^{-2m}S_N(\xi/N),\qquad\xi\in\R^d.
\end{equation}

\begin{assumption}[Whole-space hypotheses]\label{ass:wholespace}
There exist a measurable $\gamma:\R^d\setminus\{0\}\to[0,\infty)$ and constants
$C,M\ge0$ such that:
\begin{enumerate}[label=\textup{(W\arabic*)}]
\item\label{it:ws-scaling} $G_N(\xi)\to\gamma(\xi)$ for almost every $\xi\in\R^d$;
\item\label{it:ws-domination} $0\le G_N\le\Phi$ almost everywhere, for all $N$, where
$\Phi$ is locally integrable and $\Phi(\xi)\le C(1+|\xi|)^{M}$ for $|\xi|\ge1$;
\item\label{it:ws-cumulants} for every compact $K\subset\R^d$ and every $r\ge3$,
\begin{equation}\label{eq:ws-cum}
N^{-r(d/2+m)}\sum_{x_1,\dots,x_r\in NK\cap\Z^d}
\big|\kappa_r^{(N)}(x_1,\dots,x_r)\big|\longrightarrow0 .
\end{equation}
\end{enumerate}
\end{assumption}

Conditions \ref{it:ws-scaling}--\ref{it:ws-domination} are the real-frequency analogues
of Assumption~\ref{ass:spectral}; local integrability of $\Phi$ replaces the torus
zero-mode condition and controls the continuum of frequencies near the origin (it has no
torus counterpart, the nonzero dual frequencies being integers).  Note that
$\gamma\le\Phi$ almost everywhere, so
$\int|p(\xi)|^2|\widehat f(\xi)|^2\gamma(\xi)\dd\xi<\infty$ for every
$f\in C_c^\infty(\R^d)$, by the rapid decay of $\widehat f$ and the polynomial growth of
$\Phi$.  Condition \ref{it:ws-cumulants} is Assumption~\ref{ass:cumulants} localized to
growing windows $NK$.

\begin{theorem}[Whole-space form of Theorem \ref{thm:main}]\label{thm:wholespace}
Let $P$ be as in Theorem~\ref{thm:main} and let $(\phi^N_x)_{x\in\Z^d}$ be as above,
satisfying Assumption~\ref{ass:wholespace}.  Then, for all $f,g\in C_c^\infty(\R^d)$,
\begin{equation}\label{eq:ws-cov-limit}
\Cov\big(\XN(f),\XN(g)\big)\longrightarrow
\int_{\R^d}\overline{\widehat f(\xi)}\,\widehat g(\xi)\,|p(\xi)|^2\gamma(\xi)\dd\xi,
\end{equation}
and, for every $s>d/2+m+M/2$, the fields $\XN$ of \eqref{eq:def:PN-ws} converge in law
in $H^{-s}_{\mathrm{loc}}(\R^d)$ (i.e.\ $\psi\XN\to\psi\mathcal X^P$ in law in
$H^{-s}(\R^d)$ for every $\psi\in C_c^\infty(\R^d)$) to the centered Gaussian random
distribution $\mathcal X^P$ with covariance \eqref{eq:ws-cov-limit}.
\end{theorem}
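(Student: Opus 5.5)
We propose to prove Theorem~\ref{thm:wholespace} in three steps: convergence of covariances, vanishing of higher cumulants (hence finite-dimensional convergence by the method of moments), and tightness in $H^{-s}(\R^d)$ after localization by $\psi$.

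\textbf{Step 1: covariance.} For real $f,g\in C_c^\infty(\R^d)$, write $F=Pf$, $G=Pg$. By bilinearity and the spectral representation of $\Gamma_N$,
\[
\Cov(\XN(f),\XN(g))=N^{-d-2m}\int_{\T^d}S_N(\theta)\,\overline{\textstyle\sum_x F(x/N)e^{-2\pi i\theta\cdot x}}\,\textstyle\sum_y G(y/N)e^{-2\pi i\theta\cdot y}\dd\theta .
\]
By Poisson summation, $\sum_x F(x/N)e^{-2\pi i\theta\cdot x}=N^d\sum_{\ell\in\Z^d}\widehat F(N(\theta+\ell))$. Substituting $\xi=N\theta$ with $\theta\in(-1/2,1/2]^d$ gives
\[
\Cov(\XN(f),\XN(g))=\int_{\R^d}G_N(\xi)\,\overline{A_N^F(\xi)}\,A_N^G(\xi)\dd\xi,
\qquad A_N^F(\xi)=\sum_\ell\widehat F(\xi+N\ell).
\]
For $\xi\in Q_N$, the aliased terms $\ell\neq0$ satisfy $|\xi+N\ell|\ge N/2$. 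Rapid decay of $\widehat F$ therefore gives $|A_N^F(\xi)-\widehat F(\xi)|\lesssim_K N^{-K}$ uniformly, and also $|A_N^F(\xi)|\lesssim(1+|\xi|)^{-K}$ on $Q_N$. Since $\widehat F=p\widehat f$, the integrand converges a.e. to $\overline{\widehat f}\widehat g|p|^2\gamma$ by \ref{it:ws-scaling}. It is dominated by $\Phi(\xi)(1+|\xi|)^{-2K}$, which is integrable by \ref{it:ws-domination}: local integrability handles the region near the origin and polynomial growth handles infinity. Dominated convergence then yields \eqref{eq:ws-cov-limit}.

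\textbf{Step 2: higher cumulants and finite-dimensional distributions.} Take real $f_1,\dots,f_r$ with supports in a compact $K$, and set $F_i=Pf_i$. By multilinearity,
\[
|\cum(\XN(f_1),\dots,\XN(f_r))|\le \prod_i\|F_i\|_\infty\, N^{-r(d/2+m)}\sum_{x_i\in NK}|\kappa_r^{(N)}|\to0
\]
by \ref{it:ws-cumulants}. Together with Step 1 and the fact that the limiting law is determined by its moments, the method of moments gives joint convergence of $(\XN(f_1),\dots,\XN(f_n))$ to a centered Gaussian vector. The Cramér--Wold device applies by linearity.

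\textbf{Step 3: tightness.} We expect this to be the main technical step. Fix $\psi\in C_c^\infty$ and choose $\tilde\psi\in C_c^\infty$ with $\tilde\psi=1$ on a neighbourhood of $\supp\psi$. We bound
\[
\E\|\psi\XN\|_{H^{-s}}^2=\int(1+|\eta|^2)^{-s}\,\E|\XN(\psi e_{-\eta})|^2\dd\eta .
\]
The Step 1 formula gives
\[
\E|\XN(\psi e_\eta)|^2\le\int_{Q_N}\Phi(\xi)\,|A_N^{P(\psi e_\eta)}(\xi)|^2\dd\xi .
\]
Now $\widehat{P(\psi e_\eta)}(\xi)=p(\xi)\widehat\psi(\xi-\eta)$, so $|p(\xi)|^2\le C(1+|\xi-\eta|)^{2m}(1+|\eta|)^{2m}$. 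Using Peetre's inequality $(1+|\xi|)^M\le(1+|\eta|)^M(1+|\xi-\eta|)^M$ for $|\xi|\ge1$, together with local integrability of $\Phi$ on the unit ball, we obtain
\[
\E|\XN(\psi e_\eta)|^2\lesssim(1+|\eta|)^{2m+M},
\]
uniformly in $N$. The aliasing sum requires the same care as in Step 1: the shifts $\widehat\psi(\xi+N\ell-\eta)$ are uniformly summable in $\ell$. This bound is integrable against $(1+|\eta|^2)^{-s}$ exactly when $s>d/2+m+M/2$. Choosing $s'$ with $d/2+m+M/2<s'<s$, the same bound in $H^{-s'}$ together with the compact embedding $H^{-s'}(B)\hookrightarrow H^{-s}$ for distributions supported in a fixed ball $B$ gives tightness.

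\textbf{Identification of the limit.} Any subsequential limit has finite-dimensional marginals given by Step 2, since $\langle\psi\XN,h\rangle=\XN(\psi h)$. Hence the limit is unique and equals $\psi\mathcal X^P$. The existence of $\mathcal X^P$ as an $H^{-s}_{\mathrm{loc}}$-valued Gaussian variable follows from the same bound applied to the limiting covariance.
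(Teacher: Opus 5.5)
Your proposal is correct, and Steps~1 and~2 coincide with the paper's argument. These comprise the alias representation with uniform decay on $Q_N$, dominated convergence with majorant $\Phi(\xi)(1+|\xi|)^{-2K}$, the cumulant bound from (W3), the method of moments and Cram\'er--Wold. The tightness step is organized differently. You prove a per-frequency bound $\E|\widehat{\psi\XN}(\eta)|^2\lesssim(1+|\eta|)^{2m+M}$, uniformly in $N$, which is the whole-space analogue of the torus estimate \eqref{eq:var-bound}; only then do you integrate against $(1+|\eta|^2)^{-s'}$. The paper instead applies Cauchy--Schwarz over the aliases and uses Tonelli to integrate the Sobolev weight first. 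Via Peetre it obtains $\sum_\ell|p(\eta+N\ell)|^2 I_\ell(\eta)\lesssim(1+|\eta|)^{2m-2s'}$, which it then integrates against $G_N\le\Phi$. Both routes reach the same threshold $s'>d/2+m+M/2$; yours has the merit of mirroring the torus proof mode by mode.

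One justification in your Step~3 needs correcting: the aliases cannot be handled ``with the same care as in Step~1''. When $|\eta|\gtrsim N$ the terms with $\ell\neq0$ are not $O(N^{-K})$; for $\eta=N\ell_0$, the term $\ell=\ell_0$ is the leading one near $\xi=0$. The Peetre inequalities you wrote control only the $\ell=0$ term. The per-mode bound does hold, for the following reasons:
\begin{itemize}
\item Cauchy--Schwarz in $\ell$ applies, because $\sup_{N\ge1,\,\xi,\,\eta}\sum_\ell|\widehat\psi(\xi+N\ell-\eta)|<\infty$.
\item Peetre gives $|p(\xi+N\ell)|^2\lesssim(1+|\eta|)^{2m}(1+|\xi+N\ell-\eta|)^{2m}$.
\item For $\xi\in Q_N$ with $|\xi|\ge1$, the coordinatewise inequality $|\xi|\le|\xi+N\ell|$ yields $\Phi(\xi)\lesssim(1+|\eta|)^{M}(1+|\xi+N\ell-\eta|)^{M}$.
\item The unfolding identity $\int_{Q_N}\sum_\ell H(\xi+N\ell)\dd\xi=\int_{\R^d}H$ then closes the estimate.
\item On $|\xi|\le1$ the $\ell$-sum is bounded, so $\int_{|\xi|\le1}\Phi<\infty$ suffices.
\end{itemize}

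Your per-$\psi$ tightness is exactly what the statement's ``i.e.'' requires. The paper goes one step further: it takes a sequence $\psi_j\equiv1$ on $B_j$ and radii $R_j$ with exceptional probabilities $\varepsilon2^{-j}$. This gives tightness in the Fr\'echet space $H^{-s}_{\mathrm{loc}}(\R^d)$ itself.
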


\begin{remark}\label{rem:ws-usage}
For a fixed field with spectral density $S$, \ref{it:ws-scaling} reads
$N^{-2m}S(\xi/N)\to\gamma(\xi)$ a.e., and \ref{it:ws-domination} is a global majorant
for the rescaled density. Condition \eqref{eq:ws-cum} follows from the $\Lambda_N$-version
\eqref{eq:higher-cumulants} by stationarity, the sum over $NK\cap\Z^d$ being bounded by
the sum over a box of side $\lceil CN\rceil$ containing $NK$, and from the tree--graph criterion in the form of
Remark~\ref{rem:treegraph-Zd}.  It is in this form that the theory is applied in
\S\ref{sec:applications}.
\end{remark}

\begin{remark}\label{rem:vector}
If $P=(P^{(1)},\dots,P^{(n)})$ has scalar components $P^{(i)}$ of common order $m$ and symbols $p^{(i)}$, then $\XN$ is $\R^n$-valued and Theorem~\ref{thm:main} holds verbatim by the Cram\'er--Wold device \cite{cramer_theorems_1936} applied to arbitrary linear combinations $\sum_i a_i\,\la\XN,f\ra^{(i)}$, the limiting covariance being the matrix multiplier with entries
$\overline{p^{(i)}(k)}\,p^{(j)}(k)\,\gamma(k)$.  In particular $P=\nabla$ ($p^{(i)}(k)=2\pi i k_i$) and $P=\nabla^{\otimes m}$ (tensor-valued, with component symbols $p^{(\alpha)}(k)=(2\pi i k)^{\alpha}$ for multi-indices $|\alpha|=m$) are covered; when $\gamma(k)\sim c_*\,|k|^{-2m}$ the $m$-th order symbols cancel the infrared singularity and $|p^{(\alpha)}(k)|^2\gamma(k)$ stays bounded.  The limit of the $m$-th gradient observable is then \emph{projected} tensor white noise, with covariance $c_*(2\pi)^{2m}\sum_{k\ne0}\overline{(ik)^{\alpha}}(ik)^{\alpha'}|k|^{-2m}\,
\overline{\widehat f(k)}\,\widehat g(k)$: for $m=1$ the matrix multiplier is proportional to $k\otimes k/|k|^2$ ($k\ne0$), so the limit is the gradient of the continuum mean-zero Gaussian free field and \emph{not} an ordinary $d$-component white noise (it is isonormal white noise only after restricting the test space to gradient vector fields).
\end{remark}

\subsection{White noise by exact cancellation: the gradient of the free field}

Recall that $Z(p):=\{k\in\Z^d:p(k)=0\}$. We are ready to state the white-noise result as known, for instance, for the Breuer-Major theorem \cite{breuer_central_1983, coppini_central_2025}.

\begin{corollary}[White noise under exact cancellation]\label{cor:white-noise}
Under the hypotheses of Theorem~\ref{thm:main}, suppose there is $c_*>0$ with
\[
|p(k)|^2\,\gamma(k)=c_*\qquad\text{for all }k\in\Z^d\setminus Z(p).
\]
When $P=\mathrm{Id}$, constancy forces $\gamma(k)=c_*$ for every $k\in\Z^d$, and $c_*=\lim_N\widehat\Gamma_N(0)$
is the limiting susceptibility of the fields.  Then
\[
\E\big[\la\mathcal X^P,f\ra\,\overline{\la\mathcal X^P,g\ra}\big]
=c_*\sum_{k\in\Z^d\setminus Z(p)}\overline{\widehat f(k)}\,\widehat g(k).
\]
In particular, if $P=\mathrm{Id}$ then $\XN\xrightarrow{d}\sqrt{c_*}\,W$ in $H^{-s}(\T^d)$
for every $s>d/2+M/2$, with $W$ standard white noise on $\T^d$; and if $Z(p)=\{0\}$ (e.g.
$P=\partial_v$ in $d=1$, or any elliptic homogeneous $p$ vanishing only at $0$), the same
holds on the closed subspace of mean-zero test functions, in $H^{-s}(\T^d)$ for every
$s>d/2+m+M/2$.
\end{corollary}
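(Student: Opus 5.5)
This corollary is a direct specialization of Theorem~\ref{thm:main}, so the plan is to substitute the constancy hypothesis into the limiting covariance \eqref{eq:limit-cov} and then identify the resulting Gaussian law. First I will split the sum in \eqref{eq:limit-cov} over $k\in\Z^d$ into $k\in Z(p)$ and $k\notin Z(p)$. On $Z(p)$ the factor $|p(k)|^2$ vanishes. This includes $k=0$ when $m\ge1$, where the convention $\gamma(0)=0$ makes the term vanish anyway. On the complement the factor $|p(k)|^2\gamma(k)$ equals $c_*$, which gives the displayed formula $c_*\sum_{k\notin Z(p)}\overline{\widehat f(k)}\widehat g(k)$. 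The series converges absolutely by the rapid decay \eqref{eq:rapid-decay}.

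For $P=\mathrm{Id}$ we have $m=0$ and $p\equiv1$, so $Z(p)=\emptyset$ and the hypothesis reads $\gamma(k)=c_*$ for all $k\ne0$. Assumption~\ref{ass:spectral} for $m=0$ includes $k=0$ with $\gamma(0)=\lim_N\widehat\Gamma_N(0)$. Constancy at $k=0$ therefore has to be read as part of the hypothesis ``$\gamma(k)=c_*$ for every $k$'' stated in the corollary, and it identifies $c_*$ with the limiting susceptibility $\lim_N\widehat\Gamma_N(0)$ by \eqref{eq:spectral-scaling} at $k=0$. The covariance then becomes $c_*\sum_k\overline{\widehat f(k)}\widehat g(k)=c_*\langle g,f\rangle_{L^2}$ by Parseval. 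A centered Gaussian random distribution is determined by its covariance, so $\mathcal X^P$ has the law of $\sqrt{c_*}W$, where $W$ is standard white noise ($\E[\la W,f\ra\overline{\la W,g\ra}]=\langle g,f\rangle$). The convergence in $H^{-s}$ for $s>d/2+M/2$ is then Theorem~\ref{thm:main} with $m=0$.

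When $Z(p)=\{0\}$, the covariance is $c_*\sum_{k\ne0}\overline{\widehat f(k)}\widehat g(k)=c_*\langle g-\widehat g(0),f-\widehat f(0)\rangle_{L^2}$. This coincides with $c_*\langle g,f\rangle$ whenever $f,g$ are mean-zero, i.e.\ $\widehat f(0)=\widehat g(0)=0$. So the restriction of $\mathcal X^P$ to the closed subspace of mean-zero test functions (equivalently, $\mathcal X^P$ composed with the projection $f\mapsto f-\widehat f(0)$) has the law of $\sqrt{c_*}W$ restricted there. Convergence in $H^{-s}$ for $s>d/2+m+M/2$ again comes from Theorem~\ref{thm:main}, and restriction to a closed subspace is continuous, so the continuous mapping theorem transfers it. Examples with $Z(p)=\{0\}$ are $\partial_v$ in $d=1$, where $p(k)=2\pi i vk$, and elliptic homogeneous $p$.

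Nothing here is a real obstacle. The only delicate point is bookkeeping at the zero mode: checking that the $m=0$ case genuinely needs $\gamma(0)=c_*$, which comes from finite susceptibility, while for $m\ge1$ the zero mode is killed both by $p(0)=0$ and by the convention $\gamma(0)=0$. A secondary check is that Gaussian laws on $H^{-s}$ are determined by their covariance on smooth test functions, which holds because these are dense and separate points.
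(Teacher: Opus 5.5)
Your proposal is correct and follows the paper's proof: insert $|p(k)|^2\gamma(k)=c_*$ into \eqref{eq:limit-cov}, drop the terms with $k\in Z(p)$, identify the covariance of $\sqrt{c_*}\,W$ by Parseval (on mean-zero test functions when $Z(p)=\{0\}$), and take the topology directly from Theorem~\ref{thm:main}. One small slip: for $P=\mathrm{Id}$ you have $Z(p)=\emptyset$, so the hypothesis holds on all of $\Z^d$, including $k=0$, and gives $\gamma(0)=c_*=\lim_N\widehat\Gamma_N(0)$ directly, not just $\gamma(k)=c_*$ for $k\neq0$ as you first wrote.
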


\begin{proof}[Proof of Corollary~\ref{cor:white-noise}]
Insert $|p(k)|^2\gamma(k)=c_*$ for
$k\notin Z(p)$ into \eqref{eq:limit-cov}; the terms with $k\in Z(p)$ carry the factor
$|p(k)|^2=0$ and drop out.  When $P=\mathrm{Id}$, $Z(p)=\emptyset$ and the covariance is
$c_*\sum_k\overline{\widehat f(k)}\widehat g(k)=c_*\la f,g\ra_{L^2(\T^d)}$,
the covariance of $\sqrt{c_*}\,W$.  When $Z(p)=\{0\}$ the same holds on mean-zero test
functions.

The topology is the one supplied by Theorem~\ref{thm:main}: exact cancellation
constrains only the limiting fixed frequencies and gives no ultraviolet control on the
finite-$N$ modes, so it cannot improve the Sobolev index of the tightness estimate.  For
instance, Gaussian fields with all fixed low-frequency spectral weights equal to $1$ and
one symmetric pair of modes $\pm k_N$, $|k_N|\asymp N$, of weight $N^{M}$ satisfy
Assumptions~\ref{ass:spectral}--\ref{ass:cumulants} with $\gamma\equiv1$ and exact
cancellation for $P=\mathrm{Id}$, yet the $H^{-s}$ mass of that pair is of order
$N^{M-2s}$, so the sequence is not tight in $H^{-s}$ when $M>2s$.  When the spectral
densities are uniformly bounded one may take $M=0$, and for $P=\mathrm{Id}$ the
convergence then holds for every $s>d/2$.
\end{proof}

We illustrate the exact-cancellation regime of Corollary~\ref{cor:white-noise} on the Gaussian
free field, whose infinite susceptibility is compensated by a single derivative;
finite-susceptibility examples are the Gibbs fields of \S\ref{sec:dobrushin-tree-graph}.

\begin{example}[Gradient of the GFF: infinite susceptibility]\label{ex:gff-gradient}
Let $\phi^N$ be the lattice GFF on $\TN$ at inverse temperature
$\beta\in(0,\infty)$: the centered Gaussian field with
$\widehat\Gamma_N(\theta)=(\beta\lambda_N(\theta))^{-1}$ for $\theta\ne0$ and
$\widehat\Gamma_N(0):=0$, where $\lambda_N(\theta):=\sum_{j=1}^d4\sin^2(\pi\theta_j)$ is
the symbol of the discrete Laplacian (zero mode removed: the field is defined modulo
constants).  The spectral density blows up near the zero mode,
$\widehat\Gamma_N(k/N)\sim N^2/(\beta(2\pi)^2|k|^2)$ for fixed $k\ne0$, so the $m=0$ case of
Assumption~\ref{ass:spectral} fails---there is no finite limiting susceptibility---and it can be shown (by Equation \eqref{eq:cov-exact} below) that the variance of the order-zero average $\mathcal X_N^{\mathrm{Id}}(f)$ diverges like $N^{2}$ for every non-constant $f$.  Assumption~\ref{ass:spectral} instead holds with $m=1$: for the scaling \eqref{eq:spectral-scaling},
\[
N^{-2}\widehat\Gamma_N(k/N)=\frac{N^{-2}}{\beta\lambda_N(k/N)}
\longrightarrow\gamma(k)=\frac{1}{\beta\,(2\pi)^2|k|^2}\qquad(k\ne0),
\]
since $\lambda_N(k/N)=\sum_j4\sin^2(\pi k_j/N)\sim(2\pi)^2|k|^2/N^2$, confirming that
$N^{2}$ (i.e.\ the normalization $N^{-d/2-1}$ in \eqref{eq:def:PN}) is the correct
scaling; the domination \eqref{eq:domination} holds with $M=0$, since
$\sin^2(\pi t)\ge4t^2$ for $|t|\le\tfrac12$ gives
$N^{-2}\widehat\Gamma_N(k/N)\le(16\,\beta|k|^2)^{-1}\le C$ for all $k\in\widehat\TN$
with $1\le|k|$; and the zero-mode bound \eqref{eq:zero-mode} is trivial,
$\widehat\Gamma_N(0)=0$.  The field is Gaussian, so Assumption~\ref{ass:cumulants} is
immediate.  The profile $\gamma(k)=c_*|k|^{-2}$ with $c_*=1/(\beta(2\pi)^2)$ is exactly the
cancellation regime of Remark~\ref{rem:vector}: for $P=\nabla$, with symbols
$p^{(i)}(k)=2\pi ik_i$,
\[
\sum_{i=1}^d|p^{(i)}(k)|^2\,\gamma(k)
=(2\pi)^2|k|^2\cdot\frac{1}{\beta\,(2\pi)^2|k|^2}=\frac1\beta\qquad(k\ne0),
\]
and the gradient observable converges on the mean-zero subspace to the projected white noise of Remark~\ref{rem:vector},
\[
\E\big[\la\mathcal X^{\nabla,i},f\ra\,\overline{\la\mathcal X^{\nabla,j},g\ra}\big]
=\frac1\beta\sum_{k\ne0}\frac{k_ik_j}{|k|^2}\,\overline{\widehat f(k)}\,\widehat g(k).
\]
The limit is $\beta^{-1/2}$ times the gradient of the
continuum mean-zero Gaussian free field, acting as white noise of intensity $1/\beta$ on
gradient test fields.
This is the exactly Gaussian, torus analogue of the low-temperature gradient white-noise limits of Newman and Wu \cite{newman_gaussian_2018}. 
\end{example}

\section{Proofs of the main results}\label{sec:proofs}

The proof of Theorem~\ref{thm:main} has three steps: (i) an exact computation of the
covariance on the discrete torus (Lemma~\ref{lem:alias}) and its limit
(Proposition~\ref{prop:cov-limit}); (ii) the asymptotic vanishing of cumulants of order
$\ge3$; (iii) tightness in $H^{-s}$.

\subsection{Exact covariance on the discrete torus}

Throughout this section we work under the modelling convention of \eqref{eq:disc-bochner}:
the spectral weights of the discrete torus are the samples $\widehat\Gamma_N(k/N)$,
$k\in\widehat\TN$ (recall that $\TN=(-N/2,N/2]^d\cap\Z^d$) of a fixed density on $\T^d$.  This convention holds exactly for a field arising as the periodization of a stationary field on $\Z^d$ with absolutely
summable covariance $\Gamma$ and spectral density $\widehat\Gamma$: if
$\Gamma_N(z):=\sum_{n\in\Z^d}\Gamma(z+Nn)$, then its discrete Fourier weight is
\[
\sum_{z\in\LN}\Gamma_N(z)\,e^{-2\pi ik\cdot z/N}
=\sum_{u\in\Z^d}\Gamma(u)\,e^{-2\pi ik\cdot u/N}
=\widehat\Gamma(k/N),
\]
so \eqref{eq:disc-bochner} holds with $\widehat\Gamma_N=\widehat\Gamma$, with no error
term.

\begin{lemma}[Discrete Fourier representation]\label{lem:alias}
For $f,g\in C^\infty(\T^d)$,
\begin{equation}\label{eq:cov-exact}
\Cov\big(\XN(f),\XN(g)\big)
=\sum_{k\in\widehat\TN}\overline{\widehat{Pf}^{(N)}(k)}\,\widehat{Pg}^{(N)}(k)\;
N^{-2m}\,\widehat\Gamma_N\!\big(k/N\big),
\end{equation}
a \emph{finite} sum over the dual group $\widehat\TN=(-N/2,N/2]^d\cap\Z^d$, where
\[
\widehat{Ph}^{(N)}(k):=N^{-d}\sum_{x\in\LN}(Ph)(x/N)\,e^{-2\pi i k\cdot x/N}
\]
is the discrete Fourier coefficient.
\end{lemma}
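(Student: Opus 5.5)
The plan is a direct computation using bilinearity of the covariance and the discrete Bochner representation \eqref{eq:disc-bochner}. For real test functions, write $a_x:=(Pf)(x/N)$ and $b_y:=(Pg)(y/N)$ for $x,y\in\LN$. Since $\phi^N$ is centered and stationary on $\TN$,
\[
\Cov\big(\XN(f),\XN(g)\big)=N^{-d-2m}\sum_{x,y\in\LN}\overline{a_x}\,b_y\,\Gamma_N(y-x),
\]
with $y-x$ read modulo $N$. This uses the Hermitian convention $\Cov(A,B)=\E[\overline{A}B]$ fixed in \S\ref{sec:notation}, so the formula also covers complex test functions. Stationarity on the torus is what makes $\Gamma_N$ depend only on the difference modulo $N$.

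Next, substitute \eqref{eq:disc-bochner}, i.e.\ $\Gamma_N(y-x)=N^{-d}\sum_{k\in\widehat\TN}\widehat\Gamma_N(k/N)e^{2\pi i k\cdot(y-x)/N}$, which is legitimate modulo $N$ because the characters $e^{2\pi i k\cdot z/N}$ are $N$-periodic in $z$. All sums are finite, so they can be exchanged freely. The double sum then factorizes as
\[
\sum_{k\in\widehat\TN}\widehat\Gamma_N(k/N)\,N^{-2m}\Big(N^{-d}\sum_{x}\overline{a_x}e^{-2\pi i k\cdot x/N}\Big)^{\!*}\Big(N^{-d}\sum_y b_y e^{-2\pi i k\cdot y/N}\Big),
\]
where $(\cdot)^*$ denotes complex conjugation applied to $N^{-d}\sum_x a_x e^{-2\pi i k\cdot x/N}$.

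It remains to keep track of the powers of $N$. The prefactor is $N^{-d-2m}$, the Bochner formula contributes another $N^{-d}$, and the two discrete Fourier coefficients $\widehat{Pf}^{(N)}(k)$ and $\widehat{Pg}^{(N)}(k)$ absorb $N^{-2d}$ in total. The three factors together give $N^{-2d-2m}$, exactly what the two coefficients plus the $N^{-2m}$ weight require. This yields \eqref{eq:cov-exact}.

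There is no real obstacle; the only care needed is the conjugation bookkeeping. The conjugate on $\widehat{Pf}^{(N)}$ appears because $e^{-2\pi i k\cdot x/N}$ goes with $\overline{a_x}$, so that factor is $\overline{\widehat{Pf}^{(N)}(k)}$. One should also confirm the sign convention of the exponent in \eqref{eq:disc-bochner}, which matches the definition of $\widehat{Ph}^{(N)}$. Finally, the representative set $\widehat\TN$ is harmless, since any complete residue system modulo $N$ gives the same sum.
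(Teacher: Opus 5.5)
Your factorization step has a sign error, and repairing it needs an ingredient you never use. Substituting \eqref{eq:disc-bochner} for $\Gamma_N(y-x)$ gives the character $e^{2\pi i k\cdot(y-x)/N}=e^{-2\pi i k\cdot x/N}\,e^{+2\pi i k\cdot y/N}$. So the $y$-sum is $N^{-d}\sum_y b_y\,e^{+2\pi i k\cdot y/N}=\widehat{Pg}^{(N)}(-k)$, not $\widehat{Pg}^{(N)}(k)$ as in your display. Likewise, the $x$-sum $N^{-d}\sum_x\overline{a_x}\,e^{-2\pi i k\cdot x/N}$ equals $\overline{\widehat{Pf}^{(N)}(-k)}$, whereas $\overline{\widehat{Pf}^{(N)}(k)}=N^{-d}\sum_x\overline{a_x}\,e^{+2\pi i k\cdot x/N}$. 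Your claim that pairing $e^{-2\pi i k\cdot x/N}$ with $\overline{a_x}$ produces $\overline{\widehat{Pf}^{(N)}(k)}$ is therefore backwards. With correct bookkeeping your computation yields
\[
\Cov\big(\XN(f),\XN(g)\big)=N^{-2m}\sum_{k\in\widehat\TN}\widehat\Gamma_N(k/N)\,\overline{\widehat{Pf}^{(N)}(-k)}\,\widehat{Pg}^{(N)}(-k).
\]
After the (legitimate) reindexing $k\mapsto-k$ over a complete residue system, you are left with $\widehat\Gamma_N(-k/N)$ where \eqref{eq:cov-exact} has $\widehat\Gamma_N(k/N)$. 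The sign check you flagged at the end is exactly the one that fails. This is not cosmetic for the complex test functions the lemma allows. For $f=g=e_{k_0}$ the two expressions are $N^{-2m}|p(k_0)|^2\widehat\Gamma_N(-\la k_0\ra_N/N)$ and $N^{-2m}|p(k_0)|^2\widehat\Gamma_N(\la k_0\ra_N/N)$ respectively.

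The missing step is the evenness of the covariance. Since $\phi^N$ is real-valued and stationary, $\Gamma_N(-z)=\Cov(\phi^N_z,\phi^N_0)=\Gamma_N(z)$. The weights in \eqref{eq:disc-bochner} are recovered by discrete Fourier inversion, $\widehat\Gamma_N(k/N)=\sum_{z\in\LN}\Gamma_N(z)e^{-2\pi i k\cdot z/N}$, so they are even in $k$. The cleanest fix is what the paper does, implicitly, at the outset: write $\Cov(\phi^N_x,\phi^N_y)=\Gamma_N(x-y)$. Then the characters split as $e^{+2\pi i k\cdot x/N}e^{-2\pi i k\cdot y/N}$, and the two sums are exactly $N^d\,\overline{\widehat{Pf}^{(N)}(k)}$ and $N^d\,\widehat{Pg}^{(N)}(k)$. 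With that correction your argument works and is a slightly more direct version of the paper's. The paper instead expands $Pf$ and $Pg$ by discrete Fourier inversion and collapses the $x$- and $y$-sums with the orthogonality relation \eqref{eq:disc-orth}. Your power counting in $N$ is correct.
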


\begin{proof}
By definition \eqref{eq:def:PN} and stationarity,
\[
\Cov\big(\XN(f),\XN(g)\big)
=N^{-d-2m}\sum_{x,y\in\LN}\Gamma_N(x-y)\,\overline{(Pf)(x/N)}\,(Pg)(y/N).
\]
Insert the spectral representation \eqref{eq:disc-bochner} for $\Gamma_N(x-y)$ and the
discrete Fourier inversions
$(Ph)(x/N)=\sum_{k'\in\widehat\TN}\widehat{Ph}^{(N)}(k')\,e^{2\pi i k'\cdot x/N}$ for
$h=f,g$.  The orthogonality relation
\begin{equation}\label{eq:disc-orth}
N^{-d}\sum_{x\in\LN}e^{2\pi i(k-k')\cdot x/N}=\mathbf 1_{\{k\equiv k'\ (\mathrm{mod}\ N)\}},
\qquad k,k'\in\widehat\TN ,
\end{equation}
collapses the sums over $x$ and over $y$. Since we work in$\widehat\TN$, only the diagonal terms survive, and collecting them gives
\eqref{eq:cov-exact}.
\end{proof}

\begin{proposition}[Covariance limit]\label{prop:cov-limit}
Under Assumption~\ref{ass:spectral},
\begin{equation}\label{eq:cov-converges}
\lim_{N\to\infty}\Cov\big(\XN(f),\XN(g)\big)
=\sum_{k\in\Z^d}\overline{\widehat f(k)}\,\widehat g(k)\,|p(k)|^2\,\gamma(k).
\end{equation}
\end{proposition}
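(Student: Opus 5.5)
The plan is to pass to the limit termwise in the exact formula \eqref{eq:cov-exact} of Lemma~\ref{lem:alias}, using dominated convergence on $\Z^d$ with counting measure.

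\textbf{Aliasing.} First I express the discrete coefficients through the continuous ones. Since $Ph$ is smooth, its Fourier series converges absolutely, and inserting it into the definition of $\widehat{Ph}^{(N)}$ together with the orthogonality relation \eqref{eq:disc-orth} gives the aliasing identity
\[
\widehat{Ph}^{(N)}(k)=\sum_{n\in\Z^d}p(k+Nn)\,\widehat h(k+Nn),\qquad k\in\widehat\TN .
\]
For $k\in\widehat\TN$ and $n\neq0$ one has $|k+Nn|\ge N|n|/2$. The rapid decay \eqref{eq:rapid-decay} and $|p(\xi)|\le C_p|\xi|^m$ then give, for every $L>0$,
\[
\bigl|\widehat{Ph}^{(N)}(k)-p(k)\widehat h(k)\bigr|\le C_L N^{-L}.
\]
Summing over $n$ costs only a further constant. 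As a consequence $\widehat{Ph}^{(N)}(k)\to p(k)\widehat h(k)$ for each fixed $k$. Moreover $|\widehat{Ph}^{(N)}(k)|\le C_L(1+|k|)^{-L}$ uniformly in $N$ and in $k\in\widehat\TN$, because $(1+|k|)\lesssim N$ on $\widehat\TN$ lets us convert $N^{-L}$ into $(1+|k|)^{-L}$.

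\textbf{Dominated convergence.} I rewrite \eqref{eq:cov-exact} as a sum over all of $\Z^d$, with summand
\[
a_N(k):=\mathbf 1_{\{k\in\widehat\TN\}}\,\overline{\widehat{Pf}^{(N)}(k)}\,\widehat{Pg}^{(N)}(k)\,N^{-2m}\widehat\Gamma_N(k/N).
\]
For $k\neq0$ fixed, eventually $k\in\widehat\TN$. By the scaling limit \eqref{eq:spectral-scaling} and the aliasing step, $a_N(k)\to\overline{\widehat f(k)}\,\widehat g(k)\,|p(k)|^2\gamma(k)$. For $k\neq0$ the domination \eqref{eq:domination} and the uniform decay give $|a_N(k)|\le C(1+|k|)^{M-2L}$, which is summable once $2L>M+d$. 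So dominated convergence handles all nonzero modes.

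\textbf{Zero mode.} This is the main obstacle, and it splits into two cases.
\begin{itemize}
\item If $m=0$, the term converges directly: $\widehat{Pf}^{(N)}(0)\to\widehat f(0)$, and $\widehat\Gamma_N(0)\to\gamma(0)<\infty$ by assumption.
\item If $m\ge1$, then $p(0)=0$ and $p(Nn)=N^m p(n)$, so the aliasing identity gives
\[
|\widehat{Pf}^{(N)}(0)|\le\sum_{n\ne0}N^m|p(n)|\,|\widehat f(Nn)|\le C_L N^{-L}.
\]
Combined with \eqref{eq:zero-mode}, the zero-mode term is bounded by $C_L^2N^{-2L}\,N^{-2m}\,C N^{2m+M}\to0$ once $2L>M$. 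This matches the convention $\gamma(0)=0$.
\end{itemize}

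The only delicate point is that the zero-mode growth must be beaten by the super-polynomial smallness of the aliased coefficient. This works precisely because the symbol vanishes at the origin, and it is the reason the convention $\gamma(0):=0$ is consistent.
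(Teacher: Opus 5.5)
Your proposal is correct and follows the paper's proof essentially step for step. The shared steps are the aliasing identity via discrete orthogonality, the bound $|k+Nn|\ge N|n|/2$ on the dual box giving uniform rapid decay and pointwise convergence of the discrete coefficients, dominated convergence over $k\neq0$ via \eqref{eq:domination}, and a separate zero-mode estimate where $p(0)=0$ makes the aliased coefficient $O(N^{-L})$, which beats the growth in \eqref{eq:zero-mode}. One harmless slip: for $m=0$ the zero-mode coefficient converges to $p(0)\widehat f(0)$, which is $\widehat f(0)$ only when $P=\mathrm{Id}$ rather than a general constant multiple of the identity.
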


\begin{proof}
\emph{Step 1: aliasing.}  We first show that for every $h\in C^\infty(\T^d)$ the discrete
coefficients of Lemma~\ref{lem:alias} are the exact aliased sums
\begin{equation}\label{eq:alias}
\widehat{Ph}^{(N)}(k)=\sum_{\ell\in\Z^d}\widehat{Ph}(k+N\ell),\qquad
\widehat{Ph}(\zeta)=p(\zeta)\,\widehat h(\zeta),
\end{equation}
i.e., an equivalent statement to Poisson summation on $\LN$.  Since $Ph\in C^\infty(\T^d)$, the rapid decay
\eqref{eq:rapid-decay} applied to $h$ gives, for every $M''>0$,
\begin{equation}\label{eq:Ph-decay}
|\widehat{Ph}(\zeta)|=|p(\zeta)|\,|\widehat h(\zeta)|
\le C_p|\zeta|^m\,|\widehat h(\zeta)|
\le C_{h,M''}\,(1+|\zeta|)^{-M''},
\end{equation}
so the Fourier expansion $(Ph)(\theta)=\sum_{n\in\Z^d}\widehat{Ph}(n)\,e^{2\pi in\cdot\theta}$
converges absolutely.  Substituting it into the definition of $\widehat{Ph}^{(N)}(k)$ and
interchanging the two (absolutely convergent) sums,
\[
\widehat{Ph}^{(N)}(k)
=\sum_{n\in\Z^d}\widehat{Ph}(n)\,N^{-d}\sum_{x\in\LN}e^{2\pi i(n-k)\cdot x/N}
=\sum_{n\in\Z^d}\widehat{Ph}(n)\,\mathbf 1_{\{n\equiv k\ (\mathrm{mod}\ N)\}},
\]
by the orthogonality relation \eqref{eq:disc-orth}, which holds for arbitrary $n\in\Z^d$
because both sides are $N$-periodic in $n$; writing the surviving indices as $n=k+N\ell$,
$\ell\in\Z^d$, gives \eqref{eq:alias}.

\emph{Step 2: uniform decay.}  We next claim that on the dual box $\widehat\TN$,
\begin{equation}\label{eq:disc-decay}
|\widehat{Ph}^{(N)}(k)|\le C_{h,M'}(1+|k|)^{-M'}\quad\text{uniformly in }N,\ \text{for every }M',
\end{equation}
with a constant depending also on $d$.
Fix $M'>0$ and use \eqref{eq:Ph-decay} with $M''=M'+d+1$.  The $\ell=0$ term of
\eqref{eq:alias} is bounded by $C_{h,M''}(1+|k|)^{-M'-d-1}\le C_{h,M''}(1+|k|)^{-M'}$.  For
the aliasing tail, every $k\in\widehat\TN$ satisfies $|k_j|\le N/2$ coordinatewise, so for
$\ell\ne0$,
\[
|k_j+N\ell_j|\ge N|\ell_j|-N/2\ge\frac N2|\ell_j|\quad(\ell_j\ne0),
\]
hence
\[
|k+N\ell|^2\ge\sum_{j:\ell_j\ne0}\Big(\frac N2\Big)^2\ell_j^2=\Big(\frac N2\Big)^2|\ell|^2,
\]
so that
\[
\sum_{\ell\ne0}\big|\widehat{Ph}(k+N\ell)\big|
\le C_{h,M''}\sum_{\ell\ne0}\Big(\frac{N|\ell|}{2}\Big)^{-M'-d-1}
=C_{h,M''}\Big(\frac2N\Big)^{M'+d+1}\sum_{\ell\ne0}|\ell|^{-M'-d-1}
\le C'\,N^{-M'},
\]
the last series converging because $M'+d+1>d$.  Since $1+|k|\le1+\sqrt d\,N/2\le\sqrt d\,N$
on $\widehat\TN$ (for $N\ge2$), the tail is also bounded by $C'_d(1+|k|)^{-M'}$, and
\eqref{eq:disc-decay} follows.  Moreover, for fixed $k$ the tail is $O(N^{-M'})$, so
\begin{equation}\label{eq:disc-conv}
\widehat{Ph}^{(N)}(k)\longrightarrow\widehat{Ph}(k)=p(k)\,\widehat h(k)
\qquad\text{as }N\to\infty,\ k\ \text{fixed}.
\end{equation}

\emph{Step 3: passage to the limit.}  The sum \eqref{eq:cov-exact} runs over the dual box
$\widehat\TN$; write it as $\sum_{k\in\Z^d}\mathbf 1_{\widehat\TN}(k)\,T_N(k)$ with
$T_N(k):=\overline{\widehat{Pf}^{(N)}(k)}\,\widehat{Pg}^{(N)}(k)\,N^{-2m}\widehat\Gamma_N(k/N)$.
Fix $k\in\Z^d\setminus\{0\}$.  For $N>2|k|$, $k$ lies in $\widehat\TN$ and, by \eqref{eq:disc-conv},
$\widehat{Pf}^{(N)}(k)\to p(k)\widehat f(k)$ and $\widehat{Pg}^{(N)}(k)\to p(k)\widehat g(k)$,
while \eqref{eq:spectral-scaling} gives $N^{-2m}\widehat\Gamma_N(k/N)\to\gamma(k)$; thus
$\mathbf 1_{\widehat\TN}(k)T_N(k)\to|p(k)|^2\gamma(k)\,\overline{\widehat f(k)}\,\widehat g(k)$
for every $k\ne0$.

The zero mode requires its own estimate when $m\ge1$: the discrete coefficient at $k=0$
is \emph{not} $p(0)\widehat f(0)$ but the alias sum \eqref{eq:alias}, in which only the
$\ell=0$ term vanishes.  By \eqref{eq:alias} and $p(0)=0$,
$\widehat{Ph}^{(N)}(0)=\sum_{\ell\ne0}\widehat{Ph}(N\ell)$, so \eqref{eq:Ph-decay} with
an exponent $M''>\max(d,M/2)$ gives
$|\widehat{Ph}^{(N)}(0)|\le C_{h,M''}\sum_{\ell\ne0}(N|\ell|)^{-M''}\le C\,N^{-M''}$
for $h=f,g$; together with the zero-mode bound \eqref{eq:zero-mode},
\[
|T_N(0)|\le C\,N^{-2M''}\cdot N^{-2m}\widehat\Gamma_N(0)\le C\,N^{M-2M''}\longrightarrow0 ,
\]
so $T_N(0)\to0=|p(0)|^2\gamma(0)$, consistently with the convention $\gamma(0)=0$ of
Assumption~\ref{ass:spectral}.  For $m=0$,
$T_N(0)\to|p(0)|^2\gamma(0)\,\overline{\widehat f(0)}\,\widehat g(0)$ directly, by
\eqref{eq:disc-conv} and the zero-mode part of Assumption~\ref{ass:spectral}.

For the interchange of limit and summation over $k\ne0$ we dominate uniformly in $N$: on
$\widehat\TN$, \eqref{eq:disc-decay} gives
$|\widehat{Pf}^{(N)}(k)|,\,|\widehat{Pg}^{(N)}(k)|\le C(1+|k|)^{-(M+d+1)/2-m}$ (choosing $M'$
large), and \eqref{eq:domination} gives $N^{-2m}\widehat\Gamma_N(k/N)\le C(1+|k|)^M$ for
$k\in\widehat\TN$ with $|k|\ge1$.  Hence $|\mathbf 1_{\widehat\TN}(k)T_N(k)|\le C'(1+|k|)^{-(d+1)}$ for $k\ne0$, summable over $\Z^d$, and dominated convergence for series yields \eqref{eq:cov-converges}.
\end{proof}

\subsection{Cumulants and finite-dimensional distributions}

\begin{lemma}\label{lem:fdd}
Under Assumption~\ref{ass:cumulants}, for every $f_1,\dots,f_r\in C^\infty(\T^d)$ with
$r\ge3$, we have that
\begin{equation*}
    \cum(\XN(f_1),\dots,\XN(f_r))\to0.
\end{equation*}
Consequently the finite-dimensional
distributions of $\XN$ converge to those of the centered Gaussian field with covariance
\eqref{eq:limit-cov}.
\end{lemma}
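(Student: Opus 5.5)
The plan has two parts: first bound the higher cumulants of the linear statistics by the quantity in Assumption~\ref{ass:cumulants}, then pass to finite-dimensional distributions through the method of cumulants.

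\emph{Cumulant bound.} Since $\XN(f_j)=N^{-d/2-m}\sum_{x}\phi^N_x\,(Pf_j)(x/N)$ is linear in the field, multilinearity of cumulants gives
\[
\cum(\XN(f_1),\dots,\XN(f_r))=N^{-r(d/2+m)}\sum_{x_1,\dots,x_r\in\LN}\kappa_r^{(N)}(x_1,\dots,x_r)\prod_{j=1}^r(Pf_j)(x_j/N).
\]
Each $Pf_j$ is smooth on the compact torus, so $\|Pf_j\|_\infty<\infty$ uniformly in $N$. Taking absolute values yields
\[
|\cum(\XN(f_1),\dots,\XN(f_r))|\le\prod_j\|Pf_j\|_\infty\;N^{-r(d/2+m)}\sum_{x_1,\dots,x_r}|\kappa_r^{(N)}(x_1,\dots,x_r)|,
\]
and the right-hand side tends to $0$ by \eqref{eq:higher-cumulants}. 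For complex test functions one splits into real and imaginary parts and uses multilinearity.

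\emph{Finite-dimensional distributions.} Fix real $f_1,\dots,f_n$. By Cram\'er--Wold it suffices to show that $Y_N=\sum_i a_i\XN(f_i)=\XN(\sum_i a_i f_i)$ converges in law to a centered normal with variance $\sigma^2=\sum_k|\widehat h(k)|^2|p(k)|^2\gamma(k)$, where $h=\sum_i a_if_i$. The ingredients are as follows.
\begin{itemize}[label=\textbullet]
\item The mean is $\E Y_N=0$ because the field is centered.
\item The variance $\Var Y_N$ converges to $\sigma^2$ by Proposition~\ref{prop:cov-limit}, which is consistent with \eqref{eq:limit-cov}.
\item For every $r\ge3$, $\cum_r(Y_N)=\cum(Y_N,\dots,Y_N)\to0$ by the first step with $f_j=h$.
\end{itemize}
Hence every cumulant of $Y_N$ converges to the corresponding cumulant of $\mathcal N(0,\sigma^2)$. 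Moments are polynomial combinations of cumulants, so all moments of $Y_N$ converge to Gaussian moments. The Gaussian law is determined by its moments, so the method of moments gives $Y_N\to\mathcal N(0,\sigma^2)$ in law. The limiting covariance of the vector $(\XN(f_i))_i$ follows by polarization of Proposition~\ref{prop:cov-limit}, which identifies the limit as the Gaussian field with covariance \eqref{eq:limit-cov}.

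No step here is genuinely hard. The only point needing care is the moment-determinacy argument: finite moments of all orders are assumed, and uniform boundedness of the cumulants follows from their convergence. The real work (the covariance limit) was already done in Proposition~\ref{prop:cov-limit}.
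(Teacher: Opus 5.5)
Your proposal is correct and follows essentially the same route as the paper. Both use multilinearity together with the bound $\|Pf_j\|_\infty$ to reduce to Assumption~\ref{ass:cumulants}. Both then identify the limit by matching the first two cumulants with the Gaussian ones via centering and Proposition~\ref{prop:cov-limit}, and conclude with the moment--cumulant formula, moment determinacy of the Gaussian, and the Cram\'er--Wold device.
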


\begin{proof}
By multilinearity,
\[
\cum(\XN(f_1),\dots,\XN(f_r))
=N^{-r(d/2+m)}\sum_{x_1,\dots,x_r\in\LN}\kappa_r^{(N)}(x_1,\dots,x_r)\prod_{j=1}^r(Pf_j)(x_j/N).
\]
Bounding each $Pf_j$ by $\|Pf_j\|_\infty$ and using \eqref{eq:higher-cumulants} gives the
limit $0$.

It remains to upgrade the convergence of cumulants to convergence of laws; convergence
of all cumulants to the Gaussian ones does not, by itself, prove convergence of a
sequence of laws, and the missing step is the method of moments.  Let
$f_1,\dots,f_\ell\in C^\infty(\T^d)$ be real-valued, $a_1,\dots,a_\ell\in\R$, and set
$Z_N:=\sum_{j=1}^\ell a_j\XN(f_j)$.  By multilinearity, the above, Proposition~\ref{prop:cov-limit}
($r=2$) and centering ($r=1$), every cumulant of $Z_N$ converges to the corresponding
cumulant of the centered Gaussian law with variance
$\sigma^2:=\sum_{i,j}a_ia_j\sum_k\overline{\widehat f_i(k)}\widehat f_j(k)|p(k)|^2\gamma(k)$.
It is well known that moments can be expressed as cumulants polynomials, i.e., by the moment--cumulant formula,
\[
\E[Z_N^r]=\sum_{\pi\in\mathcal P_r}\prod_{B\in\pi}\cum_{|B|}(Z_N),
\]
so every moment of $Z_N$ converges to the corresponding moment of the Gaussian law
\cite[Cor.~3.2.2]{peccati_wiener_2011}: this ensures that $Z_N\Rightarrow\mathcal N(0,\sigma^2)$. Finally, the Cram\'er--Wold device \cite{cramer_theorems_1936} converts the convergence of all real linear combinations into convergence of the finite-dimensional distributions; complex test functions are recovered by splitting into real and imaginary parts.
\end{proof}

\subsection{Tightness}

\begin{proof}[Proof of Theorem~\ref{thm:main}]
By Lemma~\ref{lem:fdd} the finite-dimensional distributions converge to the claimed
Gaussian law.  It remains to prove tightness in $H^{-s}(\T^d)$ for $s>d/2+m+M/2$, after
which Prokhorov's theorem (tightness gives a relatively compact family on the separable
Hilbert space $H^{-s}(\T^d)$) together with the convergence of the finite-dimensional
distributions identifies the unique limit \cite[\S5]{billingsley_convergence_1999}.

Using $Pe_k=p(k)e_k$ and the exact covariance \eqref{eq:cov-exact} with $f=g=e_k$, by the
orthogonality relation \eqref{eq:disc-orth} the only principal frequency at which the
discrete coefficient of $Pe_k$ is nonzero is $\la k\ra_N$, where it equals $p(k)$;
hence
$\Var(\la\XN,e_k\ra)=|p(k)|^2\,N^{-2m}\widehat\Gamma_N(\la k\ra_N/N)$.  Using
$|p(k)|\le C_p(1+|k|)^m$, $|\la k\ra_N|\le|k|$, the domination \eqref{eq:domination},
and, for the residual modes $k\equiv0\ (\mathrm{mod}\,N)$, $k\ne0$ (which satisfy
$|k|\ge N$), the zero-mode part of Assumption~\ref{ass:spectral}, we obtain
\begin{equation}\label{eq:var-bound}
\Var\big(\la\XN,e_k\ra\big)\le C_*\,(1+|k|)^{2m+M},
\end{equation}
for a constant $C_*$ independent of $N,k$. We have used that for $m\ge1$, \eqref{eq:zero-mode} gives $N^{-2m}\widehat\Gamma_N(0)\le CN^{M}\le C(1+|k|)^{M}$, and for $m=0$ the convergence $\widehat\Gamma_N(0)\to\gamma(0)<\infty$ gives a uniform bound (when the zero mode is removed, $\widehat\Gamma_N(0)=0$, these terms are absent). 
Therefore, by Parseval in $H^{-s'}$, for every
$s'\in\R$,
\[
\sup_N\E\big[\|\XN\|_{H^{-s'}}^2\big]
=\sup_N\sum_{k\in\Z^d}(1+|k|^2)^{-s'}\,\Var\big(\la\XN,e_k\ra\big)
\le C_*\sum_{k\in\Z^d}(1+|k|^2)^{-s'}(1+|k|)^{2m+M},
\]
which is finite as soon as $2s'-(2m+M)>d$, i.e. $s'>d/2+m+M/2$, by comparison of the series
with the integral $\int_{\R^d}(1+|\xi|)^{2m+M-2s'}\dd\xi$ (the summand and the integrand
differ by a bounded factor on each unit cube $k+[0,1)^d$), which converges precisely when
$2s'-(2m+M)>d$.
Given $s>d/2+m+M/2$, pick $s'\in(d/2+m+M/2,\,s)$: the uniform bound holds in $H^{-s'}$,
and the embedding $H^{-s'}\hookrightarrow H^{-s}$ is compact. 
Tightness follows: by Markov's inequality the closed $H^{-s'}$-balls
$K_R:=\{\|u\|_{H^{-s'}}\le R\}$, which are compact in $H^{-s}$, satisfy
$\sup_N\PP(\XN\notin K_R)\le R^{-2}\sup_N\E\|\XN\|_{H^{-s'}}^2\to0$ as $R\to\infty$
\cite[\S5]{billingsley_convergence_1999}.  This completes the proof.
\end{proof}

\subsection{Proof of the whole-space theorem}\label{sec:ws-proof}

Throughout this subsection $f,g,h\in C_c^\infty(\R^d)$,
$\widehat h(\xi):=\int_{\R^d}h(t)e^{-2\pi i\xi\cdot t}\dd t$, and
\begin{equation}\label{eq:ws-AN}
A_{N,h}(\xi):=N^{-d}\sum_{x\in\Z^d}h(x/N)\,e^{-2\pi i\xi\cdot x/N},\qquad\xi\in\R^d,
\end{equation}
a finite sum.  Poisson summation applied to $x\mapsto h(x/N)e^{-2\pi i\xi\cdot x/N}$
gives the absolutely convergent alias representation
\begin{equation}\label{eq:ws-alias}
A_{N,h}(\xi)=\sum_{\ell\in\Z^d}\widehat h(\xi+N\ell).
\end{equation}

\begin{lemma}[Exact covariance and alias decay]\label{lem:ws-cov}
For all $f,g\in C_c^\infty(\R^d)$,
\begin{equation}\label{eq:ws-exact-cov}
\Cov\big(\XN(f),\XN(g)\big)
=\int_{Q_N}G_N(\xi)\,\overline{A_{N,Pf}(\xi)}\,A_{N,Pg}(\xi)\dd\xi .
\end{equation}
Moreover, for every $h\in C_c^\infty(\R^d)$:
\begin{enumerate}[label=\textup{(\alph*)}]
\item\label{it:ws-unif} $\sup_{N\ge1}\sup_{\xi\in\R^d}|A_{N,h}(\xi)|<\infty$;
\item\label{it:ws-decay} for every $M'>0$ there is $C=C(h,M',d)$ with
$|A_{N,h}(\xi)|\le C(1+|\xi|)^{-M'}$ for all $N\ge1$ and $\xi\in Q_N$;
\item\label{it:ws-conv} $\sup_{\xi\in Q_N}|A_{N,h}(\xi)-\widehat h(\xi)|=O(N^{-M'})$
for every $M'>0$.
\end{enumerate}
\end{lemma}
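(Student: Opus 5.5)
The plan follows the torus argument of Lemma~\ref{lem:alias} and Proposition~\ref{prop:cov-limit}, with Fourier series replaced by the spectral integral over $\T^d$.

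\emph{Exact covariance.} By bilinearity and stationarity,
\[
\Cov\big(\XN(f),\XN(g)\big)=N^{-d-2m}\sum_{x,y\in\Z^d}\Gamma_N(x-y)\,\overline{(Pf)(x/N)}\,(Pg)(y/N).
\]
Only finitely many terms are nonzero. I insert $\Gamma_N(x-y)=\int_{\T^d}S_N(\theta)e^{2\pi i\theta\cdot(x-y)}\dd\theta$. Since the $x,y$ sums are finite, summation and integration can be interchanged, giving
\[
\int_{\T^d}S_N(\theta)\,\overline{A_{N,Pf}(N\theta)}\,A_{N,Pg}(N\theta)\dd\theta .
\]
Here I use that $f$ is real, so $\overline{Pf}=Pf$; for complex $f$ the conjugation is carried along in the same way. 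Representing $\T^d$ by $(-1/2,1/2]^d$ and substituting $\xi=N\theta$ gives $\dd\theta=N^{-d}\dd\xi$ and maps the domain onto $Q_N$. The factor $N^{-d}\cdot N^{-2m}S_N(\xi/N)$ combines with the remaining powers of $N$ into $G_N(\xi)$, which yields \eqref{eq:ws-exact-cov}. The one point to check is that the integrand is $1$-periodic in $\theta$, which holds because $A_{N,h}(\xi+N\ell)=A_{N,h}(\xi)$ for $\ell\in\Z^d$. So the choice of fundamental domain is harmless.

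\emph{Alias bounds.} All three items rest on \eqref{eq:ws-alias} and the Schwartz decay $|\widehat h(\zeta)|\le C_{h,L}(1+|\zeta|)^{-L}$, valid for every $L$.
\begin{itemize}
\item[(b)] For $\xi\in Q_N$ the $\ell=0$ term is bounded by $C(1+|\xi|)^{-M'}$. For $\ell\neq0$ I use the coordinatewise estimate from Step~2 of Proposition~\ref{prop:cov-limit}: $|\xi_j+N\ell_j|\ge\frac N2|\ell_j|$ whenever $\ell_j\ne0$, hence $|\xi+N\ell|\ge\frac N2|\ell|$. Taking $L=M'+d+1$, the tail is at most $C N^{-M'-d-1}\sum_{\ell\ne0}|\ell|^{-M'-d-1}\le C'N^{-M'}$. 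Since $1+|\xi|\le\sqrt d\,N$ on $Q_N$, this is also $\le C(1+|\xi|)^{-M'}$.
\item[(c)] The same tail estimate gives $\sup_{\xi\in Q_N}|A_{N,h}(\xi)-\widehat h(\xi)|=O(N^{-M'})$.
\item[(a)] On $Q_N$ this follows from (b). For general $\xi$, periodicity gives $\xi=\xi'+N\ell$ with $\xi'\in Q_N$ and $A_{N,h}(\xi)=A_{N,h}(\xi')$, so the bound extends to all of $\R^d$. For small $N$, say $N=1$, the constant in (b) still works because the tail bound uses only $N\ge1$; alternatively one can use the crude bound $N^{-d}\sum_x|h(x/N)|\le C$ uniformly in $N$.
\end{itemize}

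The main obstacle is not analytic. It is bookkeeping: the periodicity and fundamental-domain step in the change of variables, and the conjugation convention for the pairing. The decay estimates themselves are a direct transcription of the torus case.
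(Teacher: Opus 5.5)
Your proposal is correct and follows the paper's route. The paper inserts the spectral representation into the finite double sum, interchanges, and substitutes $\xi=N\theta$ onto $Q_N$; it then gets (b)--(c) from the alias sum \eqref{eq:ws-alias} via $|\xi+N\ell|\ge\frac N2|\ell|$, and for (a) it uses only the crude Riemann-sum bound, which you list as your alternative. Two cosmetic slips, neither affecting the argument: your intermediate $\int_{\T^d}$ display is missing the prefactor $N^{d-2m}$ (it becomes $G_N$ after $\dd\theta=N^{-d}\dd\xi$), and $1+|\xi|\le\sqrt d\,N$ on $Q_N$ fails at $N=1$ when $d<4$, so write $C_dN$ instead.
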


\begin{proof}
By \eqref{eq:def:PN-ws}, stationarity, and the convention
$\Cov(A,B)=\E[\overline{(A-\E A)}(B-\E B)]$,
\[
\Cov\big(\XN(f),\XN(g)\big)
=N^{-d-2m}\sum_{x,y\in\Z^d}\Gamma_N(x-y)\,\overline{(Pf)(x/N)}\,(Pg)(y/N),
\]
a finite double sum.  Inserting the spectral representation of $\Gamma_N$, interchanging
the (finite) sums with the integral, and substituting $\theta=\xi/N$
($\dd\theta=N^{-d}\dd\xi$, $\theta\in\T^d\leftrightarrow\xi\in Q_N$) gives
\begin{equation*}
\begin{split}
    &\Cov\big(\XN(f),\XN(g)\big)=\\
    &=\int_{Q_N}N^{-2m}S_N(\xi/N) \Big(\overline{N^{-d}\!\sum_x(Pf)(x/N)e^{-2\pi i\xi x/N}}\Big) \Big(N^{-d}\!\sum_y(Pg)(y/N)e^{-2\pi i\xi y/N}\Big)\dd\xi,
\end{split}
\end{equation*}
which is \eqref{eq:ws-exact-cov}.  

Turning to the other statements: \ref{it:ws-unif} holds since $|A_{N,h}(\xi)|\le N^{-d}\sum_x|h(x/N)|$, a Riemann sum of $\int|h|$.  For \ref{it:ws-decay}, fix $M'$ and use the fact that $h$ is notably Schwartz function and thus $|\widehat h(\zeta)|\le C_{h,M''}(1+|\zeta|)^{-M''}$ with $M''=M'+d+1$. Putting this in \eqref{eq:ws-alias}: the $\ell=0$ term is $\le C(1+|\xi|)^{-M'}$, and for $\ell\ne0$, coordinatewise $|\xi_j+N\ell_j|\ge N|\ell_j|-N/2\ge(N/2)|\ell_j|$ on $Q_N$, hence $|\xi+N\ell|\ge(N/2)|\ell|$, so
\[
\sum_{\ell\ne0}|\widehat h(\xi+N\ell)|
\le C_{h,M''}\Big(\frac2N\Big)^{M''}\sum_{\ell\ne0}|\ell|^{-M''}\le C'N^{-M'-d-1},
\]
and $1+|\xi|\le C_dN$ on $Q_N$ makes the tail $\le C''(1+|\xi|)^{-M'}$ as well.  The
same tail estimate gives \ref{it:ws-conv}.
\end{proof}

\begin{proof}[Proof of Theorem~\ref{thm:wholespace}]
\emph{Step 1: covariance.}  By Lemma~\ref{lem:ws-cov}\,\ref{it:ws-conv} and
\ref{it:ws-scaling}, the integrand of \eqref{eq:ws-exact-cov} converges almost
everywhere to
$\overline{\widehat{Pf}(\xi)}\,\widehat{Pg}(\xi)\gamma(\xi)
=\overline{\widehat f(\xi)}\,\widehat g(\xi)\,|p(\xi)|^2\gamma(\xi)$.  For a dominant,
split $\R^d$: on $|\xi|\le1$, Lemma~\ref{lem:ws-cov}\,\ref{it:ws-unif} bounds
$|\overline{A_{N,Pf}}A_{N,Pg}|$ by a constant and $G_N\le\Phi\in L^1(\{|\xi|\le1\})$; on
$|\xi|\ge1$, Lemma~\ref{lem:ws-cov}\,\ref{it:ws-decay} with $M'=(M+d+1)$ and
$G_N(\xi)\le C(1+|\xi|)^M$ give the integrable dominant $C(1+|\xi|)^{-d-1}$.  Dominated
convergence yields \eqref{eq:ws-cov-limit}, the limit being finite since
$\gamma\le\Phi$.

\emph{Step 2: cumulants and finite-dimensional distributions.}  Let
$f_1,\dots,f_r\in C_c^\infty(\R^d)$ be real, $r\ge3$, and let $K$ be a compact set
containing $\bigcup_j\supp f_j$, hence $\bigcup_j\supp(Pf_j)$.  By multilinearity,
\[
\big|\cum(\XN(f_1),\dots,\XN(f_r))\big|
\le\prod_{j=1}^r\|Pf_j\|_\infty\;
N^{-r(d/2+m)}\!\!\sum_{x_1,\dots,x_r\in NK\cap\Z^d}\!\!\big|\kappa_r^{(N)}(x_1,\dots,x_r)\big|
\longrightarrow0
\]
by \eqref{eq:ws-cum}.  Together with Step~1 ($r=2$) and centering ($r=1$), all joint cumulants converge to those of the Gaussian field with covariance \eqref{eq:ws-cov-limit}; the moment-method argument in the proof of Lemma~\ref{lem:fdd} applies verbatim and gives convergence of the finite-dimensional distributions of $\XN$ to those of $\mathcal X^P$.

\emph{Step 3: tightness in $H^{-s}_{\mathrm{loc}}$.}  Fix $\psi\in C_c^\infty(\R^d)$
and $s'\in(d/2+m+M/2,\,s)$.  The random distribution $\psi\XN$ is supported in the
fixed compact $\supp\psi$ and its Fourier transform is the function
$\widehat{\psi\XN}(\xi)=\XN(\psi e_{-\xi})$, $e_{-\xi}(t):=e^{-2\pi i\xi\cdot t}$.
Since $h_\xi:=\psi e_{-\xi}\in C_c^\infty(\R^d)$ with
$\widehat{Ph_\xi}(\zeta)=p(\zeta)\,\widehat\psi(\zeta+\xi)$, formula
\eqref{eq:ws-exact-cov} (with $f=g=h_\xi$) and \eqref{eq:ws-alias} give
\[
\E\big|\widehat{\psi\XN}(\xi)\big|^2
=\int_{Q_N}G_N(\eta)\,\big|A_N(\eta,\xi)\big|^2\dd\eta,
\qquad
A_N(\eta,\xi):=\sum_{\ell\in\Z^d}p(\eta+N\ell)\,\widehat\psi(\eta+N\ell+\xi).
\]
By Cauchy--Schwarz on the $\ell$-sum,
\[
|A_N(\eta,\xi)|^2
\le\Big(\sum_{\ell}|p(\eta+N\ell)|^2\,|\widehat\psi(\eta+N\ell+\xi)|\Big)
\Big(\sum_{\ell}|\widehat\psi(\eta+N\ell+\xi)|\Big),
\]
and the second factor is bounded by a constant $C_\psi$ uniformly in $N\ge1$, $\eta$,
$\xi$: the points $\{\eta+N\ell+\xi\}_\ell$ are $N$-separated and $\widehat\psi$ is
Schwartz, so $\sum_\ell(1+|\eta+N\ell+\xi|)^{-(d+1)}\le C_d$.  Hence, by Tonelli,
\[
\E\big[\|\psi\XN\|_{H^{-s'}}^2\big]
=\int_{\R^d}(1+|\xi|^2)^{-s'}\,\E\big|\widehat{\psi\XN}(\xi)\big|^2\dd\xi
\le C_\psi\int_{Q_N}\!G_N(\eta)\sum_\ell|p(\eta+N\ell)|^2\,I_\ell(\eta)\dd\eta,
\]
with $I_\ell(\eta):=\int(1+|\xi|^2)^{-s'}|\widehat\psi(\eta+N\ell+\xi)|\dd\xi$.  By
Peetre's inequality
$(1+|\xi|^2)^{-s'}\le2^{s'}(1+|\eta+N\ell|^2)^{-s'}(1+|\xi+\eta+N\ell|^2)^{s'}$, which implies that
\[
I_\ell(\eta)\le C(1+|\eta+N\ell|^2)^{-s'}\int(1+|\zeta|^2)^{s'}|\widehat\psi(\zeta)|\dd\zeta
=C'_\psi\,(1+|\eta+N\ell|^2)^{-s'} .
\]
With $|p(\zeta)|^2\le C(1+|\zeta|)^{2m}$ this yields, for $\eta\in Q_N$,
\begin{equation*}
\begin{split}
&\sum_\ell|p(\eta+N\ell)|^2 I_\ell(\eta)
\le C\sum_\ell(1+|\eta+N\ell|)^{2m-2s'}\\
&\le C\big[(1+|\eta|)^{2m-2s'}+N^{2m-2s'}\textstyle\sum_{\ell\ne0}|\ell|^{2m-2s'}\big]
\le C''(1+|\eta|)^{2m-2s'},
\end{split}
\end{equation*}
using $|\eta+N\ell|\ge(N/2)|\ell|$, the convergence of
$\sum_{\ell\ne0}|\ell|^{2m-2s'}$ (as $2s'-2m>d$), and $1+|\eta|\le C_dN$ with
$2m-2s'<0$.  Therefore
\[
\sup_N\E\big[\|\psi\XN\|_{H^{-s'}}^2\big]
\le C\int_{|\eta|\le1}\Phi(\eta)\dd\eta
+C\int_{|\eta|\ge1}(1+|\eta|)^{M+2m-2s'}\dd\eta<\infty,
\]
since $\Phi$ is locally integrable and $2s'>d+2m+M$.

Next, the embedding of $\{u\in H^{-s'}(\R^d):\supp u\subset K_0\}$ into $H^{-s}(\R^d)$
is compact for every compact $K_0$ and $s>s'$: if $(u_n)$ is bounded in $H^{-s'}$ with
supports in $K_0$, pick $\varphi\in C_c^\infty$ equal to $1$ on $K_0$; then
$\widehat u_n(\xi)=u_n(\varphi e_{-\xi})$ and its derivatives in $\xi$ are bounded on
compacts (by $\|u_n\|_{H^{-s'}}\|\partial^\alpha_\xi(\varphi e_{-\xi})\|_{H^{s'}}
\le C_\alpha(1+|\xi|)^{s'+|\alpha|}$), so by Arzel\`a--Ascoli a subsequence of
$(\widehat u_n)$ converges locally uniformly, and the splitting
$\|u_n-u_{n'}\|^2_{H^{-s}}\le\int_{|\xi|\le R}+\;2\sup_n\|u_n\|^2_{H^{-s'}}
(1+R^2)^{-(s-s')}$ shows it is Cauchy in $H^{-s}$.  By Markov's inequality the sets
$\{\|\psi\XN\|_{H^{-s'}}\le R\}$ then provide tightness of $(\psi\XN)_N$ in
$H^{-s}(\R^d)$.

Finally, let $(\psi_j)_{j\ge1}\subset C_c^\infty$ with $\psi_j\equiv1$ on the ball
$B_j$.  Given $\varepsilon>0$ choose $R_j$ with
$\sup_N\PP(\|\psi_j\XN\|_{H^{-s'}}>R_j)\le\varepsilon2^{-j}$.  The set
$K_\varepsilon:=\{u:\|\psi_ju\|_{H^{-s'}}\le R_j\ \forall j\}$ is relatively compact in
$H^{-s}_{\mathrm{loc}}(\R^d)$: by the previous paragraph and a diagonal extraction,
every sequence in $K_\varepsilon$ has a subsequence such that $\psi_ju_n$ converges in
$H^{-s}$ for every $j$, and for any $h\in C_c^\infty$ one has $h=h\psi_j$ for
$j$ large, with multiplication by $h$ bounded on $H^{-s}$.  Since
$\PP(\XN\notin K_\varepsilon)\le\varepsilon$ uniformly in $N$, the family $(\XN)$ is
tight in the Polish space $H^{-s}_{\mathrm{loc}}(\R^d)$.  By Prokhorov's theorem and
Step~2---the pairings $u\mapsto\la u,f\ra$, $f\in C_c^\infty$, being continuous on
$H^{-s}_{\mathrm{loc}}$ and law-determining---every subsequential limit is the
Gaussian field $\mathcal X^P$ with covariance \eqref{eq:ws-cov-limit}, which proves
convergence in law in $H^{-s}_{\mathrm{loc}}(\R^d)$
\cite{billingsley_convergence_1999,mitoma_tightness_1983}.
\end{proof}

\section{Duality: macroscopic derivatives are microscopic differences}\label{sec:duality}

In the applications of Section~\ref{sec:applications} the field is defined on all of $\Z^d$ (not periodized) and the natural test functions are $f\in C_c^\infty(\R^d)$; the statement of the theory in that setting is the whole-space Theorem~\ref{thm:wholespace}.  Here we prove the duality between the macroscopic differential observable $\XN$ and the microscopic difference operator $P_N$ (Proposition~\ref{prop:duality}); this is the form actually used in the finite-susceptibility applications, where $\XN$ itself degenerates for $m\ge1$.

Before the formal statement, here is the mechanism in the simplest case, $P=\partial_v=v\cdot\nabla$
(order $m=1$).  The macroscopic observable $\XN(f)$ of \eqref{eq:def:PN} (in the
whole-space form \eqref{eq:def:PN-ws}) tests the field
against the \emph{differentiated} test function $(\partial_v f)(x/N)$.  Discrete summation by
parts moves that derivative off the test function and onto the field: up to a vanishing
remainder, $\XN(f)$ is, then, the fluctuation field of the microscopic difference
$\nabla_N\phi$ of the field, tested against $f$.  Differences may thus be placed on whichever
side is convenient---on $f$ they generate the symbol $p$, on $\phi$ they generate the
microscopic derivative whose fluctuations are the physical object (for $P=\partial_v$, the
gradient of a $\nabla\varphi$-interface model).  Proposition~\ref{prop:duality} makes this
precise and fixes the correct normalizations.  The mechanism is well
known---it is already explicit in Naddaf and Spencer
\cite{naddaf_homogenization_1997}---but its details at this level of generality do not
seem to have been written down.  For an $N$-independent field of finite susceptibility,
the differentiated microscopic field $P_N\phi$ carries the central-limit normalization
$N^{m-d/2}$, and the resulting observable $\YN$ equals $(-1)^m$ times the order-zero
fluctuation field $\WN$ tested against $Pf$.  The general order-$m$ case is the same
computation with $m$ successive differences.

\begin{proposition}[Macroscopic--microscopic duality]\label{prop:duality}
Let $P_N$ be the discrete operator \eqref{eq:def:PNdisc} on $\Z^d$, and let $P_N^*$ be its
adjoint with respect to the counting measure $\sum_{x\in\Z^d}\delta_x$, i.e. the operator
determined by $\sum_x(P_Nu)_x\,v_x=\sum_x u_x\,(P_N^*v)_x$ for finitely supported $u,v$.
Since $\sum_x(\nabla_{N,j}u)_x v_x=-\sum_x u_x(\nabla_{N,j}^*v)_x$ with the backward
difference $(\nabla_{N,j}^*v)_x:=v_x-v_{x-e_j}$, one has
$P_N^*=(-1)^m\sum_{|\alpha|=m}c_\alpha\,\nabla_N^{*\alpha}$ with
$\nabla_N^{*\alpha}:=\prod_j(\nabla_{N,j}^*)^{\alpha_j}$, and discrete symbol
\begin{equation}\label{eq:adjoint-symbol}
\pi_N^*(\theta)=(-1)^m\sum_{|\alpha|=m}c_\alpha\prod_j(1-e^{-2\pi i\theta_j})^{\alpha_j},
\end{equation}
to be distinguished from the symbol
$\pi_N(\theta)=\sum_{|\alpha|=m}c_\alpha\prod_j(e^{2\pi i\theta_j}-1)^{\alpha_j}$ of $P_N$
itself.  Both satisfy, uniformly on compacts in $\theta$,
\begin{equation}\label{eq:symbol-expansion}
\pi_N(\theta/N)=N^{-m}\,p(\theta)+O(N^{-m-1}),\qquad
\pi_N^*(\theta/N)=(-1)^m N^{-m}\,p(\theta)+O(N^{-m-1}),\qquad N\to\infty.
\end{equation}
Write $\WN(g):=N^{-d/2}\sum_{x\in\Z^d}\phi_x\,g(x/N)$ for the order-zero fluctuation
observable.  Then, for every $f\in C_c^\infty(\R^d)$ and every centered stationary field
$(\phi_x)$ with $\sum_z|\Gamma(z)|<\infty$,
\begin{equation}\label{eq:duality-L2}
\YN(f):=N^{m-d/2}\sum_{x\in\Z^d}(P_N\phi)_x\,f(x/N)
=(-1)^m\,\WN(Pf)+o_{L^2}(1).
\end{equation}
\end{proposition}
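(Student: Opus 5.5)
The plan is to move the difference operator off the field and onto the test function by exact summation by parts, Taylor-expand the resulting discrete derivative of $f(\cdot/N)$, and control the remainder in $L^2$ using only absolute summability of $\Gamma$.

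\textbf{Symbol expansion.} I would first verify \eqref{eq:symbol-expansion}. For fixed $\theta$ in a compact set, $e^{2\pi i\theta_j/N}-1=2\pi i\theta_j/N+O(N^{-2})$ and $1-e^{-2\pi i\theta_j/N}=2\pi i\theta_j/N+O(N^{-2})$, uniformly. A product of $m$ such factors is $N^{-m}\prod_j(2\pi i\theta_j)^{\alpha_j}+O(N^{-m-1})$. Summing against $c_\alpha$ gives $\pi_N(\theta/N)=N^{-m}p(\theta)+O(N^{-m-1})$, and the extra $(-1)^m$ in $P_N^*$ gives the second formula. The adjoint formula for $P_N^*$ follows by applying the one-dimensional identity $\sum_x(\nabla_{N,j}u)_xv_x=-\sum_xu_x(\nabla^*_{N,j}v)_x$ once per difference, $m$ times in all.

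\textbf{Duality identity.} Set $v_x:=f(x/N)$, which is finitely supported since $f\in C_c^\infty(\R^d)$. Each term of $\sum_x(P_N\phi)_xv_x$ involves only finitely many $\phi_y$, so summation by parts is exact (the local identity needs no decay of $\phi$):
\[
\YN(f)=N^{m-d/2}\sum_x\phi_x\,(P_N^*v)_x .
\]
Next I would write iterated backward differences as averaged derivatives: for $g(x)=f(x/N)$,
\[
(\nabla_N^{*\alpha}g)(x)=N^{-m}\int_{[0,1]^m}(\partial^\alpha f)\Big(\tfrac{x-\sum_i t_ie_{j_i}}{N}\Big)\dd t ,
\]
where $e_{j_1},\dots,e_{j_m}$ lists the directions of $\alpha$ with multiplicity. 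Since $\partial^\alpha f$ is Lipschitz, this gives
\[
(P_N^*v)_x=(-1)^mN^{-m}(Pf)(x/N)+r_N(x),\qquad |r_N(x)|\le C_fN^{-m-1}.
\]
Moreover $r_N$ is supported in $N(\supp f+B_{m})\cap\Z^d$, a set of cardinality at most $C N^d$.

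\textbf{$L^2$ remainder.} This reduces the claim to
\[
\YN(f)-(-1)^m\WN(Pf)=N^{m-d/2}\sum_x\phi_xr_N(x).
\]
Its second moment equals $N^{2m-d}\sum_{x,y}\Gamma(x-y)r_N(x)r_N(y)$. By Schur's test (Young's inequality with $\Gamma\in\ell^1$) this is at most
\[
N^{2m-d}\,\|\Gamma\|_{\ell^1}\sum_xr_N(x)^2\le N^{2m-d}\|\Gamma\|_{\ell^1}\cdot CN^d\cdot N^{-2m-2}=O(N^{-2})\to0,
\]
which is \eqref{eq:duality-L2}.

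\textbf{Main obstacle.} The delicate step is obtaining the remainder bound uniformly in $x$ together with the support control. Both are needed because the error $O(N^{-m-1})$ must beat the normalization $N^{m}$ times the $\sqrt{N^d}$ volume factor. The integral representation of iterated differences settles both at once, and no finer information on $\Gamma$ beyond summability is required.
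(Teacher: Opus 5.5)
Your proposal is correct and follows the paper's proof essentially step by step. Both arguments use exact summation by parts onto $P_N^*f$, a uniform $O(N^{-m-1})$ Taylor remainder supported on $O(N^d)$ lattice points, and an $L^2$ bound of order $N^{-2}\sum_z|\Gamma(z)|$. The differences are cosmetic: you write out the Taylor step through the averaged-derivative formula for iterated differences, and you bound the double sum by Schur's test, whereas the paper uses $|r_N(x)r_N(y)|\le CN^{-2m-2}$ together with $\sum_{x,y\in NK\cap\Z^d}|\Gamma(x-y)|\le\#(NK\cap\Z^d)\,\sum_z|\Gamma(z)|$.
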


\begin{proof}
Each first difference has symbol $e^{2\pi i\theta_j}-1=2\pi i\theta_j+O(\theta_j^2)$; at
$\theta/N$ this is $2\pi i\theta_j/N+O(N^{-2})$, so a product of $m$ of them is
$N^{-m}\prod_j(2\pi i\theta_j)^{\alpha_j}+O(N^{-m-1})$; summing over $\alpha$ gives the
expansion of $\pi_N$ in \eqref{eq:symbol-expansion}, and the same computation for the
backward differences $1-e^{-2\pi i\theta_j}=2\pi i\theta_j+O(\theta_j^2)$ gives that of
$\pi_N^*$.  By exact summation by parts on $\Z^d$ (no boundary terms, since $f$ has compact
support),
\[
N^{m-d/2}\sum_x(P_N\phi)_x f(x/N)=N^{m-d/2}\sum_x\phi_x\,(P_N^*f)(x/N),
\]
with
\[
(P_N^*f)(x/N)=(-1)^m\sum_{|\alpha|=m}c_\alpha(\nabla_N^{*\alpha}f)(x/N).
\]
The backward differences sample the nearby points $(x-j)/N$, $0\le j_i\le m$, so
$(P_N^*f)(x/N)$ may be nonzero slightly \emph{outside} $N\supp f$; fix the compact
neighbourhood $K:=\{t\in\R^d:\operatorname{dist}(t,\supp f)\le1\}$, which contains all
sample points for $N\ge m$.  Taylor expansion of the smooth $f$ gives
$(P_N^*f)(x/N)=(-1)^m N^{-m}(Pf)(x/N)+r_N(x)$ with $r_N(x)=O(N^{-m-1})$ uniformly in
$x$, and $r_N(x)=0$ for $x\notin NK$; hence, using $N^{m-d/2}\cdot N^{-m}=N^{-d/2}$,
\begin{equation*}
\begin{split}
\YN(f)& =N^{m-d/2}\sum_x\phi_x\,(P_N^*f)(x/N)\\
& =(-1)^m N^{-d/2}\sum_x\phi_x\,(Pf)(x/N)+R_N =(-1)^m\,\WN(Pf)+R_N,
\end{split}
\end{equation*}
with $R_N=N^{m-d/2}\sum_{x\in NK\cap\Z^d}\phi_x\,r_N(x)$.  Since $r_N=O(N^{-m-1})$,
\begin{equation*}
\begin{split}
\E[R_N^2]&=N^{2m-d}\!\!\sum_{x,y\in NK\cap\Z^d}\!\!\Gamma(x-y)\,r_N(x)r_N(y)\\
&\le C\,N^{2m-d}\,N^{-2m-2}\!\!\sum_{x,y\in NK\cap\Z^d}\!\!|\Gamma(x-y)|\\
&\le C'N^{-2}\sum_z|\Gamma(z)|\to0,
\end{split}
\end{equation*}
since $\#(NK\cap\Z^d)\asymp N^d$.  This proves \eqref{eq:duality-L2}.
\end{proof}

\begin{remark}\label{rem:variable}
The duality with a microscopic difference operator relies on $P$ having constant coefficients.  For fixed variable (macroscopic) coefficients $P=\sum_{|\alpha|=m}c_\alpha(x)\partial^\alpha$
the symbol depends on $x$ and the covariance no longer factorizes through a single Fourier multiplier; one then expects a variable-coefficient limiting covariance. A constant effective operator arises only in a genuinely different, homogenization regime, driven by rapidly oscillating or random microscopic coefficients, as in the $\nabla\varphi$ literature \cite{naddaf_homogenization_1997,armstrong_c2_2022,armstrong_quantitative_2024}.  We do not pursue either direction here.
\end{remark}

\section{Two criteria for the cumulant hypothesis}\label{sec:applications}

Once the covariance scaling and spectral bounds required by
Theorems~\ref{thm:main} and~\ref{thm:wholespace} have been verified,
the remaining model-specific input is the higher-cumulant hypothesis.
This section gives two criteria for it: a tree--graph bound on the
cumulants of the field (\S\ref{sec:treegraph}), which also yields an explicit decay rate
for the cumulants of the fluctuation field, and the Dobrushin uniqueness condition for
Gibbs fields (\S\ref{sec:dobrushin-tree-graph}), which implies the tree--graph bound for
every bounded single-site observable.

\subsection{Tree--graph bounds imply vanishing cumulants}\label{sec:treegraph}

We give a checkable sufficient condition for Assumption~\ref{ass:cumulants}.  The pointwise bound below is a convenient tree-sum form of the defining estimate of F\'eray's theory of \emph{weighted dependency graphs} \cite{feray_weighted_2018}: F\'eray's bound is formulated with the maximal spanning-tree weight, and since $\max_Tw(T)\le\sum_Tw(T)\le r^{r-2}\max_Tw(T)$, the two formulations are equivalent up to $r$-dependent constants, so \eqref{eq:tree-graph} is implied by the standard weighted-dependency-graph bound of \cite{feray_weighted_2018,dousse_weighted_2019}.
Section~\ref{sec:dobrushin-tree-graph} verifies it for Gibbs fields in the Dobrushin
regime with a polynomial weight; exponential and covariance weights are covered alike.  With
the covariance weight it is not satisfied by non-linear functionals of Gaussian fields,
although Hermite polynomial functionals do satisfy Assumption~\ref{ass:cumulants} under the
Breuer--Major summability condition (Remark~\ref{rem:nonlinear-gaussian}).

\begin{definition}[Tree--graph bound]\label{def:treegraph}
Let $(X_i)_{i\in I}$ be centered with finite moments and let $w:I\times I\to[0,\infty)$ be
symmetric.  We say $(X_i)$ satisfies the \emph{tree--graph bound with weight $w$} if for
every $r\ge2$ there is $D_r<\infty$ such that, for all $i_1,\dots,i_r$ (not necessarily
distinct),
\begin{equation}\label{eq:tree-graph}
\big|\cum(X_{i_1},\dots,X_{i_r})\big|\le D_r\sum_{T\in\mathcal T_r}\prod_{(u,v)\in E(T)}w(i_u,i_v),
\end{equation}
$\mathcal T_r$ being the set of spanning trees on $\{1,\dots,r\}$.
\end{definition}

\begin{proposition}[Tree--graph bounds imply the cumulant hypothesis]\label{prop:tree-graph}
Let $(\phi^N_x)_{x\in\TN}$ be centered and stationary, satisfying \eqref{eq:tree-graph}
with translation-invariant weight $w(x,y)=w_N(x-y)$, where $w_N:\TN\to[0,\infty)$ (an
$N$-periodic function), $\|w_N\|_{\ell^1}:=\sum_{z\in\TN}w_N(z)$, and $D_r$ is independent of $N$.
Then, for every $r\ge2$,
\begin{equation}\label{eq:cumulant-bound}
\sum_{x_1,\dots,x_r\in\LN}\big|\kappa_r^{(N)}(x_1,\dots,x_r)\big|\le D_r\,r^{r-2}\,|\LN|\,\|w_N\|_{\ell^1}^{\,r-1}.
\end{equation}
Consequently, for every scalar constant-coefficient operator $P$ of order $m\ge0$ and all
$f_1,\dots,f_r\in C^\infty(\T^d)$,
\begin{equation}\label{eq:cum-linear}
\big|\cum(\XN(f_1),\dots,\XN(f_r))\big|
\le D_r\,r^{r-2}\,N^{-r(d/2+m)}\,|\LN|\,\|w_N\|_{\ell^1}^{\,r-1}\prod_{j=1}^r\|Pf_j\|_\infty.
\end{equation}
In particular, if $\sup_N\|w_N\|_{\ell^1}<\infty$ then Assumption~\ref{ass:cumulants} holds for
every $m\ge0$ and every $d\ge1$.
\end{proposition}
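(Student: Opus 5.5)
The plan is to sum the tree--graph bound \eqref{eq:tree-graph} over all positions, one spanning tree at a time, integrating out the leaves of the tree in turn. Fix $r\ge2$. Summing \eqref{eq:tree-graph} over $x_1,\dots,x_r\in\LN$ and exchanging the finite sums gives
\[
\sum_{x_1,\dots,x_r\in\LN}\big|\kappa_r^{(N)}(x_1,\dots,x_r)\big|\le D_r\sum_{T\in\mathcal T_r}\ \sum_{x_1,\dots,x_r\in\LN}\ \prod_{(u,v)\in E(T)}w_N(x_u-x_v).
\]
By Cayley's formula, $|\mathcal T_r|=r^{r-2}$. So \eqref{eq:cumulant-bound} follows once we show that each inner sum is at most $|\LN|\,\|w_N\|_{\ell^1}^{r-1}$.

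For a fixed tree $T$, choose a leaf $u$ of $T$ with unique neighbour $v$. The variable $x_u$ appears only in the factor $w_N(x_u-x_v)$. Since $\LN$ is a complete set of representatives of $\TN$ and $w_N$ is $N$-periodic, we have
\[
\sum_{x_u\in\LN}w_N(x_u-x_v)=\sum_{z\in\TN}w_N(z)=\|w_N\|_{\ell^1},
\]
and this holds independently of $x_v$. Removing $u$ leaves a spanning tree on $r-1$ vertices, so we repeat the step. After $r-1$ leaf removals one vertex remains, and summing over it contributes $|\LN|=N^d$. This yields \eqref{eq:cumulant-bound}. The only point requiring care is periodicity: the differences $x_u-x_v$ must be read modulo $N$, and this is exactly why $w_N$ is taken to be a function on $\TN$. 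On $\Z^d$ the same argument would only give the upper bound $\sum_{z\in\Z^d}w(z)$, which is still sufficient.

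For \eqref{eq:cum-linear}, use multilinearity exactly as in the proof of Lemma~\ref{lem:fdd}:
\[
\cum(\XN(f_1),\dots,\XN(f_r))=N^{-r(d/2+m)}\sum_{x_1,\dots,x_r}\kappa_r^{(N)}(x_1,\dots,x_r)\prod_j(Pf_j)(x_j/N).
\]
Bound each factor $|(Pf_j)(x_j/N)|$ by $\|Pf_j\|_\infty$ and then apply \eqref{eq:cumulant-bound}.

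Finally, suppose $W:=\sup_N\|w_N\|_{\ell^1}<\infty$. Then for each $r\ge3$,
\[
N^{-r(d/2+m)}\sum_{x_1,\dots,x_r}\big|\kappa_r^{(N)}(x_1,\dots,x_r)\big|\le D_r\,r^{r-2}\,W^{r-1}\,N^{d-rd/2-rm}.
\]
The exponent satisfies $d-rd/2-rm\le d(1-r/2)<0$, because $r\ge3$, $d\ge1$ and $m\ge0$. Hence the right-hand side tends to $0$, which is Assumption~\ref{ass:cumulants}. No step is a real obstacle: the only substantive ingredient is the leaf-peeling induction together with the periodic reindexing, and everything else is bookkeeping.
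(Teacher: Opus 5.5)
Your proposal is correct and follows the paper's proof essentially step for step. Both arguments use leaf-peeling over each spanning tree, with the periodic reindexing of $\LN$ onto $\TN$ giving a factor $\|w_N\|_{\ell^1}$ per removed leaf, then Cayley's formula $|\mathcal T_r|=r^{r-2}$, multilinearity with $|Pf_j|\le\|Pf_j\|_\infty$ for \eqref{eq:cum-linear}, and the exponent count $d-r(d/2+m)<0$ for $r\ge3$.
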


\begin{proof}
Fix $r\ge2$.  By \eqref{eq:tree-graph} and translation invariance,
\[
\sum_{x_1,\dots,x_r\in\LN}\big|\kappa_r^{(N)}\big|
\le D_r\sum_{T\in\mathcal T_r}\sum_{x_1,\dots,x_r\in\LN}\prod_{(u,v)\in E(T)}w_N(x_u-x_v).
\]
Fix a tree $T$, pick a leaf $v$ with neighbour $u$; summing first over $x_v$ (the map
$x_v\mapsto x_u-x_v$ being a bijection of $\LN$ onto $\TN$),
$\sum_{x_v\in\LN}w_N(x_u-x_v)=\sum_{z\in\TN}w_N(z)=\|w_N\|_{\ell^1}$.  Removing $v$ leaves a
tree on $r-1$ vertices; iterating $r-1$ times,
$\sum_{x_\bullet}\prod_{(u,v)}w_N(x_u-x_v)\le|\LN|\,\|w_N\|_{\ell^1}^{r-1}$.  Cayley's formula
$|\mathcal T_r|=r^{r-2}$ gives \eqref{eq:cumulant-bound}.  Then \eqref{eq:cum-linear}
follows by multilinearity and bounding $|Pf_j|\le\|Pf_j\|_\infty$.  Finally, with
$|\LN|=N^d$ and $\sup_N\|w_N\|_{\ell^1}<\infty$,
\[
\big|\cum(\XN(f_1),\dots,\XN(f_r))\big|\lesssim N^{d-r(d/2+m)}\prod_j\|Pf_j\|_\infty\to0
\quad(r\ge3),
\]
since for $r\ge3$ and $m\ge0$ we have $r(d/2+m)\ge 3d/2>d$, so the exponent
$d-r(d/2+m)<0$.  This is Assumption~\ref{ass:cumulants}.
\end{proof}

\begin{remark}[Non-linear functionals of Gaussian fields]\label{rem:nonlinear-gaussian}
The criterion is sufficient but not necessary for Assumption~\ref{ass:cumulants}, and
non-linear functionals of Gaussian fields are the instructive counterexample.  Let
$(\phi_x)_{x\in\Z^d}$ be a centered stationary Gaussian field with $\rho(x):=\E[\phi_0\phi_x]$,
$\rho(0)=1$.  Recall that $F\in L^2(\mathcal N(0,1))$ has \emph{Hermite rank} $q$ if
$F=\sum_{\ell\ge q}c_\ell\,\mathrm{He}_\ell$ with $c_q\ne0$, $(\mathrm{He}_\ell)$ being the Hermite
polynomials, and that $F$ is a \emph{Hermite polynomial functional} if this sum is finite.  Let
$Y_x:=\phi_x^2-1=\mathrm{He}_2(\phi_x)$, the Hermite polynomial functional of rank $2$.  The triangle
diagram gives $\cum(Y_0,Y_x,Y_y)=8\,\rho(x)\rho(y)\rho(x-y)$, which no constant $D_3$
dominates pointwise by $\sum_T\prod_{(i,j)\in E(T)}|\rho|^2$ (with all three correlations
equal to $t\to0$ the diagram is $\sim t^3$ while each spanning tree is $\sim t^4$), so the
pointwise bound \eqref{eq:tree-graph} with weight $|\rho|^2$ fails.  The summed form
\eqref{eq:cumulant-bound} fails as well, even in the summable regime: for
$\rho(z)\asymp|z|^{-3/5}$ in $d=1$ one has $\sum_z|\rho(z)|^2<\infty$, yet the summed
triangle contribution is $\sum_{x,y,z\in\LN}|\rho(x-y)\rho(y-z)\rho(x-z)|\asymp N^{6/5}$,
which exceeds $|\LN|\,\|\rho^2\|_{\ell^1}^2\asymp N$.  Nevertheless
$N^{-3/2}\sum_{x,y,z\in\LN}|\cum(Y_x,Y_y,Y_z)|\asymp N^{-3/10}\to0$, and in general
the cumulant condition \eqref{eq:ws-cum} holds (with $m=0$) for $Y_x=F(\phi_x)$ with $F$ any Hermite
polynomial functional of rank $q$ (so that $Y_0$ has finite moments of all orders and the joint
cumulants are finite sums of diagrams) as soon as $\sum_z|\rho(z)|^q<\infty$: this is the connected-diagram estimate at the core
of the Breuer--Major theorem \cite{breuer_central_1983}, see \cite[Ch.~7]{nourdin_normal_2012}
and, for the field-level form on $\Z^d$ and its multivariate version,
\cite{coppini_central_2025}. Theorem~\ref{thm:wholespace} therefore applies to such
functionals---for instance to the centered squared gradient of the Gaussian free field on $\Z^d$, $d\ge3$,
\cite{cipriani_properties_2023}, the multivariate analogue of a rank-two Hermite polynomial
functional (a quadratic form in the correlated Gaussian vector $\nabla\phi_x$; see
\cite{coppini_central_2025} for the multivariate form) with absolutely summable covariance,
whose volume average converges to white noise---but through the diagram estimate rather
than through the criterion of this section; we do not develop this direction here.
\end{remark}

\begin{remark}\label{rem:treegraph-Zd}
The case $r=2$ in \eqref{eq:cumulant-bound} is included for use in
the variance estimates and is consistent with the finite-susceptibility hypothesis
$\sup_N\|w_N\|_{\ell^1}<\infty$.  The criterion transfers verbatim to the whole-space setting: for an
$N$-independent field on $\Z^d$ satisfying \eqref{eq:tree-graph} with weight
$w(x,y)=\bar w(x-y)$ and $\|\bar w\|_{\ell^1}:=\sum_{z\in\Z^d}\bar w(z)<\infty$, the same leaf-peeling
argument bounds the sum restricted to $\{x_j/N\in K\}$, $K\subset\R^d$ compact, by
$D_r\,r^{r-2}\,\#(NK\cap\Z^d)\,\|\bar w\|_{\ell^1}^{\,r-1}=O(N^d)$, which yields the cumulant condition \eqref{eq:ws-cum} of
Theorem~\ref{thm:wholespace}; this is the form used in \S\ref{sec:dobrushin-tree-graph}.
\end{remark}

\begin{remark}[Berry--Esseen rates from the tree--graph bound]\label{rem:berry-esseen}
The bound \eqref{eq:cum-linear} is quantitative.  For the order-zero observable
$\mathcal X_N^{\mathrm{Id}}(g)=N^{-d/2}\sum_{x\in\LN}\phi^N_x\,g(x/N)$ it reads
\[
|\cum_r(\mathcal X_N^{\mathrm{Id}}(g))|\le D_r\,r^{r-2}\,\big(\sup_N\|w_N\|_{\ell^1}\big)^{r-1}\|g\|_\infty^{r}
\,N^{-(r-2)d/2}
\]
for $r\ge2$.  If the constants grow at most factorially,
$D_r\le(r!)^{\delta}A^{r}$ for some $\delta\ge0$ and $A<\infty$, and
$\Var(\mathcal X_N^{\mathrm{Id}}(g))\to\sigma^2>0$, this is the Statulevi\v{c}ius condition on the cumulants of
the standardized variable with parameter of order $N^{d/2}$, and the Saulis--Statulevi\v{c}ius
lemma \cite[Lemma~2.3, Cor.~2.1]{saulis_limit_1991} bounds the Kolmogorov distance between
$\mathcal X_N^{\mathrm{Id}}(g)/\sqrt{\Var(\mathcal X_N^{\mathrm{Id}}(g))}$ and $\mathcal N(0,1)$ by $C\,N^{-d/(2(1+2\delta))}$; for
$\delta=0$ (e.g.\ uniformly bounded finite-range fields with the standard dependency-graph
cumulant bound) this is the optimal $N^{-d/2}=|\LN|^{-1/2}$.  The constants obtained from the Dobrushin argument are not optimized. Tracking their dependence on the cumulant order gives factorial-power bounds, which can be used to obtain quantitative estimates when the limiting variance is positive. To the authors' knowledge, a rate for the random field, uniform over a metric on $H^{-s}$, is open.
\end{remark}

\subsection{Dobrushin uniqueness implies tree--graph bounds}
\label{sec:dobrushin-tree-graph}

We now give a model-independent sufficient condition for the tree--graph bound of
Definition~\ref{def:treegraph}.  For finite-range Gibbs specifications, the Dobrushin
uniqueness condition implies that bound for every bounded single-site observable, with one
fixed summable weight; Proposition~\ref{prop:tree-graph} then yields the cumulant condition
\eqref{eq:ws-cum}.  We state the resulting fluctuation theorem once and list the models to
which it applies.

Let $\mathsf S$ be a standard Borel single-spin space and let
$\mathcal G=(\mathcal G_x)_{x\in\Z^d}$ be a translation-invariant, quasilocal specification
on $\mathsf S^{\Z^d}$ which admits a Gibbs measure.  Its Dobrushin interdependence
coefficients are for $x\neq y$
\begin{equation}
 c_{xy}:=\sup\big\{\,|\mathcal G_x(A\mid\eta)-\mathcal G_x(A\mid\eta')|:\
 A\subset\mathsf S\ \text{measurable},\ \eta,\eta'\in\mathsf S^{\Z^d},\
 \eta_z=\eta'_z\ \text{for }z\neq y\big\},
 \label{eq:dobrushin-coefficients-new}
\end{equation}
and $c_{xx}:=0$.  We assume that the interdependence matrix has finite range: there is
$R<\infty$ such that $c_{xy}=0$ whenever $|x-y|_1>R$.  The Dobrushin condition is
\begin{equation}
 c(\mathcal G):=\sup_{x\in\Z^d}\sum_{y\in\Z^d}c_{xy}<1.
 \label{eq:dobrushin-condition-new}
\end{equation}
It implies uniqueness of the Gibbs measure
\cite[Theorem 6.31]{friedli_velenik_2017}; 
we denote the unique state by $\mu$
and write $\E_\mu$, $\Cov_\mu$, $\cum_\mu$ for expectation, covariance and joint cumulants
under $\mu$.  By translation invariance of $\mathcal G$ and uniqueness, $\mu$ is
translation invariant.

\begin{proposition}[Dobrushin uniqueness implies the tree--graph bound]
\label{prop:dobrushin-implies-tree-graph}
Assume \eqref{eq:dobrushin-condition-new} and define the translation-invariant summable weight
\begin{equation}
 w(x,y):=(1+|x-y|_1)^{-(d+1)},
 \qquad
 \|w\|_{\ell^1}:=\sum_{z\in\Z^d}(1+|z|_1)^{-(d+1)}<\infty.
 \label{eq:fixed-polynomial-weight}
\end{equation}
For every $r\geq2$ there is a constant $D_r<\infty$, depending only on $r$ and on the
Dobrushin matrix, such that for all sites $x_1,\ldots,x_r\in\Z^d$ (not necessarily
distinct) and all measurable $F_1,\ldots,F_r:\mathsf S\to\R$ with $\|F_i\|_\infty\leq1$,
\begin{equation}
 \big|\cum_\mu\big(F_1(\sigma_{x_1}),\ldots,F_r(\sigma_{x_r})\big)\big|
 \leq D_r\sum_{T\in\mathcal T_r}\prod_{\{i,j\}\in E(T)}w(x_i,x_j).
 \label{eq:dobrushin-tree-graph-bound}
\end{equation}
By multilinearity the bound holds for arbitrary bounded $F_i$ with the constant
$D_r\prod_i\|F_i\|_\infty$.  Thus every bounded single-site observable satisfies
Definition~\ref{def:treegraph} with the same weight $w$ for all cumulant orders, and so does
any finite collection of bounded single-site observables simultaneously.  Consequently, for
every finite $\Lambda\subset\Z^d$,
\begin{equation}
 \sum_{x_1,\ldots,x_r\in\Lambda}
 \big|\cum_\mu\big(F_1(\sigma_{x_1}),\ldots,F_r(\sigma_{x_r})\big)\big|
 \leq D_r\,r^{r-2}\,|\Lambda|\,\|w\|_{\ell^1}^{\,r-1},
 \label{eq:dobrushin-volume-cumulant-bound}
\end{equation}
and the whole-space cumulant condition \eqref{eq:ws-cum} holds for every $r\geq3$ and every
differential order $m\geq0$.
\end{proposition}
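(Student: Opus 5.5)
The plan has three parts: a one-step Dobrushin covariance estimate, an inductive reduction of cumulants to trees of covariance-type decay, and leaf-peeling for the summed bounds. The main input is the standard Dobrushin comparison estimate. Under \eqref{eq:dobrushin-condition-new}, the matrix $C=(c_{xy})$ has $\|C\|<1$ in the $\ell^\infty$ row-sum norm, so $D:=\sum_{n\ge0}C^n$ converges. For a local function $G$ and a bounded single-site $F$ one has
\[
|\Cov_\mu(F(\sigma_x),G)|\le 2\|F\|_\infty\sum_y D_{xy}\,\delta_y(G),
\]
where $\delta_y(G)$ is the oscillation of $G$ in the spin at $y$ (see \cite[Ch.~6]{friedli_velenik_2017}). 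Because $C$ has finite range $R$ and row sums at most $c<1$, the support of $C^n$ lies in $\{|x-y|_1\le nR\}$ and $(C^n)_{xy}\le c^n$. Hence $D_{xy}\le \sum_{n\ge |x-y|_1/R}c^n\le C'e^{-a|x-y|_1}$, which is in particular at most $C''w(x,y)$. The polynomial weight is chosen only for summability; exponential decay gives it with room to spare.

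To pass from covariances to higher cumulants, I would use the standard recursive (Malyshev/Duneau–Iagolnitzer–Souillard type) argument, inducting on $r$. Write the cumulant as
\[
\cum(X_1,\dots,X_r)=\cum\bigl(X_1,\dots,X_{r-2},\,X_{r-1}X_r\bigr)-\textstyle\sum(\text{partition corrections}),
\]
or, more directly, use a conditional-expectation decoupling. Pick the pair $\{i,j\}$ realizing the maximal $|x_i-x_j|$ cut of a minimal spanning tree. Split the sites into two groups $A,B$ separated by distance $\ell\ge \mathrm{diam}/(r-1)$. The cumulant vanishes if the two groups are independent. The Dobrushin estimate, applied to the products $\prod_{i\in A}F_i$ and $\prod_{j\in B}F_j$ (whose oscillations are supported on the group sites), shows that every mixed moment $\E[\prod_A\prod_B]$ differs from $\E[\prod_A]\E[\prod_B]$ by at most $C_r e^{-a\ell}$. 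Expanding the cumulant through the moment–cumulant formula, the terms where the partition respects $A|B$ cancel, giving
\[
|\cum|\le C_r e^{-a\,\mathrm{diam}(x)/(r-1)}.
\]
Then, since $\sum_T\prod_{E(T)}w\ge \max_T\prod w\ge \bigl(1+\mathrm{diam}_1\bigr)^{-(d+1)(r-1)}$ (take a path tree, each edge at most the diameter), the exponential bound dominates. This yields \eqref{eq:dobrushin-tree-graph-bound} with $D_r$ depending on $r$, $R$, $c$. Coinciding sites cause no issue, since all quantities are bounded by $\|F\|_\infty\le1$.

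The step I expect to be the main obstacle is the clean decoupling inequality for products over two separated groups. It requires the multi-site version of the Dobrushin comparison: the oscillation of $\E_\mu[\prod_B F\mid \mathcal F_{\text{near }A}]$ must decay like $\sum_{y\in B}\sum_{z}D_{zy}$ over $z$ near $A$. I would derive it by comparing $\mu$ with the Gibbs measure conditioned on the $A$-spins, using the Dobrushin comparison theorem \cite[Thm.~6.36-type]{friedli_velenik_2017}, and summing over the at most $r$ sites.

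Given \eqref{eq:dobrushin-tree-graph-bound}, the volume bound \eqref{eq:dobrushin-volume-cumulant-bound} follows by the leaf-peeling argument of Proposition~\ref{prop:tree-graph} over the finite set $\Lambda$, each sum over a leaf being bounded by $\|w\|_{\ell^1}$. Condition \eqref{eq:ws-cum} then follows as in Remark~\ref{rem:treegraph-Zd}: taking $\Lambda=NK\cap\Z^d$, the normalized sum is $O(N^{d-r(d/2+m)})\to0$ for $r\ge3$.
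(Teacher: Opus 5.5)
Your proposal is correct and follows essentially the same route as the paper: the Dobrushin--F\"ollmer covariance estimate, exponential decay of $\mathbf D=(I-\mathbf C)^{-1}$ from finite range, splitting of mixed moments across the longest-edge cut of a minimum spanning tree, comparison of the moment--cumulant formula with the vanishing cumulant of independent copies, exponential decay dominating the polynomial tree weight, and leaf-peeling for the volume bound. The step you flag as the main obstacle needs no new derivation, because the paper quotes the F\"ollmer estimate in its general two-function form $|\Cov_\mu(U,V)|\le\frac14\sum_{x,y}\delta_x(U)\mathbf D_{xy}\delta_y(V)$ and applies it directly to the two block products; your use of the diameter, together with the trivial lower bound on every tree weight, in place of the paper's edge-by-edge bound along the minimum spanning tree is a harmless simplification.
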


\begin{proof}
Write $\mathbf C=(c_{xy})_{x,y\in\Z^d}$ and
\[
 \mathbf D:=\sum_{j=0}^{\infty}\mathbf C^j=(I-\mathbf C)^{-1}.
\]
For a bounded local function $U$ on $\mathsf S^{\Z^d}$, let
\[
 \delta_x(U):=
 \sup_{\substack{\eta,\eta'\\
                  \eta_z=\eta'_z\ \text{for }z\neq x}}
 |U(\eta)-U(\eta')|.
\]
The Dobrushin--F\"ollmer covariance estimate gives
\begin{equation}
 |\Cov_\mu(U,V)|
 \leq
 \frac14\sum_{x,y\in\Z^d}
 \delta_x(U)\,\mathbf D_{xy}\,\delta_y(V);
 \label{eq:follmer-covariance-new}
\end{equation}
see \cite{Follmer1982} and
\cite[Theorem~2.2]{SalasSokal1997}.

Since $\mathbf C$ has range $R$, $(\mathbf C^j)_{xy}=0$ when
$j<|x-y|_1/R$.  Moreover,
\[
 \sup_x\sum_y(\mathbf C^j)_{xy}\leq c(\mathcal G)^j.
\]
For $0<c(\mathcal G)<1$ it follows that
\begin{equation}
 \mathbf D_{xy}
 \leq
 \sum_{j\geq\lceil |x-y|_1/R\rceil}c(\mathcal G)^j
 \leq
 \frac{1}{1-c(\mathcal G)}\,e^{-\tau|x-y|_1},
 \qquad
 \tau:=\frac1R\log\frac1{c(\mathcal G)}.
 \label{eq:dobrushin-resolvent-decay-new}
\end{equation}
The case $c(\mathcal G)=0$ is independent and is immediate.  Therefore, if
$U$ and $V$ are bounded local functions supported in finite sets $A$ and
$A'$, then
\begin{equation}
 |\Cov_\mu(U,V)|
 \leq C_{A,A'}\|U\|_\infty\|V\|_\infty
 e^{-\tau\operatorname{dist}(A,A')},
 \label{eq:dobrushin-two-block-new}
\end{equation}
and when $|A|,|A'|\leq r$ the prefactor is bounded by a constant depending only
on $r$ and the Dobrushin matrix.

Set $\bar F_i:=F_i-\E_\mu[F_i(\sigma_0)]$; by translation invariance of $\mu$,
$\bar F_i(\sigma_{x})$ is centered for every $x$, and $\|\bar F_i\|_\infty\le2$.
Cumulants of order at least two are invariant under adding constants, so it
is enough to bound $\cum_\mu(\bar F_1(\sigma_{x_1}),\ldots,\bar F_r(\sigma_{x_r}))$.  Fix a
nontrivial partition $I\sqcup J=\{1,\ldots,r\}$ and put
\[
 \operatorname{dist}(x_I,x_J):=\min_{i\in I,\,j\in J}|x_i-x_j|_1.
\]
For $B\subset\{1,\ldots,r\}$, write
\[
 \mathfrak m_B:=\E_\mu\Big[\prod_{i\in B}\bar F_i(\sigma_{x_i})\Big],
 \qquad
 \widetilde{\mathfrak m}_B:=\mathfrak m_{B\cap I}\,\mathfrak m_{B\cap J},
 \qquad \mathfrak m_\varnothing:=1.
\]
If $B$ meets both $I$ and $J$, then \eqref{eq:dobrushin-two-block-new},
applied to the two products, gives
\begin{equation}
 |\mathfrak m_B-\widetilde{\mathfrak m}_B|
 \leq C_r\,e^{-\tau\operatorname{dist}(x_I,x_J)}.
 \label{eq:moment-splitting-new}
\end{equation}
If $B$ lies in one side of the partition, the difference is zero.

The moment--cumulant formula is
\begin{equation}
 \cum_\mu\big(\bar F_1(\sigma_{x_1}),\ldots,\bar F_r(\sigma_{x_r})\big)
 =
 \sum_{\pi\in\mathcal P_r}
 (|\pi|-1)!(-1)^{|\pi|-1}
 \prod_{B\in\pi}\mathfrak m_B.
 \label{eq:moment-cumulant-new}
\end{equation}
The same polynomial evaluated at the moments $\widetilde{\mathfrak m}_B$ vanishes,
because these are the joint moments obtained by replacing the two subvectors
indexed by $I$ and $J$ by independent copies.  Subtracting the two
polynomials, telescoping each product, and using
\eqref{eq:moment-splitting-new} and boundedness of all moments yields
\begin{equation}
 \big|\cum_\mu\big(\bar F_1(\sigma_{x_1}),\ldots,\bar F_r(\sigma_{x_r})\big)\big|
 \leq C_r\,e^{-\tau\operatorname{dist}(x_I,x_J)}
 \label{eq:cumulant-cut-new}
\end{equation}
for every nontrivial partition $I\sqcup J$, with a new constant $C_r$.

Let
\[
 \mst(x_1,\ldots,x_r)
 :=\min_{T\in\mathcal T_r}
 \sum_{\{i,j\}\in E(T)}|x_i-x_j|_1
\]
be the minimum spanning-tree length, and let $T_*$ attain this minimum.
Remove a longest edge of $T_*$, of length $L_*$.  The resulting components
define a partition $I\sqcup J$.  By the cut property of minimum spanning
trees, every edge crossing this cut has length at least $L_*$; hence
$\operatorname{dist}(x_I,x_J)\geq L_*$.  Since $T_*$ has $r-1$ edges,
\[
 L_*\geq\frac{\mst(x_1,\ldots,x_r)}{r-1}.
\]
Using \eqref{eq:cumulant-cut-new},
\begin{equation}
 \big|\cum_\mu\big(\bar F_1(\sigma_{x_1}),\ldots,\bar F_r(\sigma_{x_r})\big)\big|
 \leq
 C_r
 \exp\!\left\{-\frac{\tau}{r-1}\mst(x_1,\ldots,x_r)\right\}.
 \label{eq:dobrushin-MST-exponential-new}
\end{equation}

The decay rate in \eqref{eq:dobrushin-MST-exponential-new} depends on $r$.
This does not prevent a tree--graph estimate with one fixed summable weight.
Indeed, let
\[
 C'_r:=
 \sup_{t\geq0}e^{-\tau t/(r-1)}(1+t)^{d+1}<\infty.
\]
Applying
$e^{-\tau t/(r-1)}\leq C'_r(1+t)^{-(d+1)}$ to every edge of $T_*$ gives
\[
 \exp\!\left\{-\frac{\tau}{r-1}\mst(x_1,\ldots,x_r)\right\}
 \leq
 (C'_r)^{r-1}
 \prod_{\{i,j\}\in E(T_*)}w(x_i,x_j).
\]
The last product is bounded by the sum over all spanning trees.  This proves
\eqref{eq:dobrushin-tree-graph-bound}, with
$D_r:=C_r\,(C'_r)^{r-1}$.  Equation
\eqref{eq:dobrushin-volume-cumulant-bound} is then Proposition~\ref{prop:tree-graph}.  If
$|\Lambda_N|=O(N^d)$, its right-hand side is $O_r(N^d)$, and therefore
\[
 N^{-r(d/2+m)}O_r(N^d)
 =O_r\!\left(N^{d-r(d/2+m)}\right)\longrightarrow0,
 \qquad r\geq3,
\]
which is Assumption~\ref{ass:cumulants}.  The same argument applies to the sums over
$NK\cap\Z^d$, which gives \eqref{eq:ws-cum}.
\end{proof}

\begin{remark}
\label{rem:dobrushin-strength-converse}
There is no converse from the tree--graph bound, or from Assumption~\ref{ass:cumulants},
to Dobrushin uniqueness.  These cumulant properties concern one selected
Gibbs state, whereas the Dobrushin condition is a uniform contraction
property of the specification and implies uniqueness.  At zero field and
sufficiently low Ising temperature, the plus and minus states are distinct,
so Dobrushin uniqueness fails; nevertheless, the low-temperature plus state
satisfies an exponential weighted-dependency-graph bound in the
contour-expansion regime \cite{dousse_weighted_2019}.
\end{remark}

The model-independent fluctuation statement can now be recorded once.

\begin{corollary}[Gaussian fluctuations in the Dobrushin regime]
\label{cor:dobrushin-fluctuation-fields}
Assume \eqref{eq:dobrushin-condition-new}.  Let $F^1,\ldots,F^n:\mathsf S\to\R$ be bounded
and put
\[
 \phi_x^a:=F^a(\sigma_x)-\E_\mu[F^a(\sigma_x)],
 \qquad a=1,\ldots,n.
\]
Their covariances are exponentially summable by \eqref{eq:dobrushin-two-block-new}.  Hence
the susceptibility matrix
\begin{equation}
 \boldsymbol\chi_{ab}
 :=\sum_{z\in\Z^d}
 \Cov_\mu(\phi_0^a,\phi_z^b)
 \label{eq:dobrushin-susceptibility-matrix-new}
\end{equation}
is finite and positive semidefinite.  For
$f\in C_c^\infty(\R^d)$, the joint field
\[
 \WN^a(f):=N^{-d/2}\sum_{x\in\Z^d}
 \phi_x^a f(x/N)
\]
converges, in the sense of Theorem~\ref{thm:wholespace}, to an
$\R^n$-valued white noise $\mathcal W$ with
\begin{equation}
 \E[\mathcal W^a(f)\mathcal W^b(g)]
 =\boldsymbol\chi_{ab}\la f,g\ra_{L^2(\R^d)}.
 \label{eq:dobrushin-vector-white-noise-new}
\end{equation}
The limit may be degenerate when $\boldsymbol\chi$ has a nontrivial kernel.

Let $P$ be a homogeneous constant-coefficient operator of order $m$, and let
$P_N$ be the associated lattice difference operator \eqref{eq:def:PNdisc}.  Then, jointly in
$a=1,\ldots,n$,
\begin{equation}
 N^{m-d/2}\sum_{x\in\Z^d}
 (P_N\phi^a)_x f(x/N)
 \xrightarrow[]{\ d\ }
 (-1)^m\mathcal W^a(Pf),
 \label{eq:dobrushin-differentiated-new}
\end{equation}
with limiting covariance
$\boldsymbol\chi_{ab}\la Pf,Pg\ra_{L^2}$.
\end{corollary}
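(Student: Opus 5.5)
The plan is to apply Theorem~\ref{thm:wholespace} with $m=0$ and $P=\mathrm{Id}$ to the $N$-independent vector field $\phi_x=(\phi^1_x,\ldots,\phi^n_x)$, reducing the vector statement to scalar ones via the Cram\'er--Wold device of Remark~\ref{rem:vector}. For $a\in\R^n$ we consider the scalar field $\phi^{(a)}_x:=\sum_b a_b\phi^b_x=F^{(a)}(\sigma_x)-\E_\mu F^{(a)}$, where $F^{(a)}:=\sum_b a_bF^b$ is again a bounded single-site observable. This field is centered and stationary, because $\mu$ is translation invariant, and it has bounded moments of all orders.

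\textbf{Finite susceptibility.} Apply \eqref{eq:dobrushin-two-block-new} with $A=\{0\}$ and $A'=\{z\}$ to get $|\Cov_\mu(\phi^a_0,\phi^b_z)|\le C e^{-\tau|z|_1}$, so each $\boldsymbol\chi_{ab}$ is an absolutely convergent sum. Positive semidefiniteness follows from
\[
a^\top\boldsymbol\chi a=\lim_{L\to\infty}|\Lambda_L|^{-1}\Var_\mu\Big(\sum_{x\in\Lambda_L}\phi^{(a)}_x\Big)\ge 0,
\]
where the limit is obtained by Ces\`aro summation using absolute summability.

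\textbf{Hypotheses (W1)--(W3).} The covariance of $\phi^{(a)}$ is exponentially summable, so its spectral density $S(\theta)=\sum_z\Gamma^{(a)}(z)e^{-2\pi i\theta\cdot z}$ is continuous and bounded, and $S(0)=a^\top\boldsymbol\chi a$. Since $m=0$, we have $G_N(\xi)=\mathbf 1_{Q_N}(\xi)S(\xi/N)\to S(0)$ for every $\xi$, by continuity of $S$ at $0$; this is (W1) with constant $\gamma\equiv a^\top\boldsymbol\chi a$. The bound $0\le G_N\le\|S\|_\infty$ gives (W2) with $M=0$. Proposition~\ref{prop:dobrushin-implies-tree-graph}, applied to the single observable $F^{(a)}$, gives (W3). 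Theorem~\ref{thm:wholespace} then yields convergence in $H^{-s}_{\mathrm{loc}}$, $s>d/2$, of $\WN^{(a)}$ to a Gaussian field with covariance $a^\top\boldsymbol\chi a\int\overline{\widehat f}\widehat g=a^\top\boldsymbol\chi a\,\la f,g\ra$, using Plancherel. Joint convergence follows from Cram\'er--Wold: finite-dimensional distributions converge for every $a$, and tightness holds componentwise. Polarization identifies the cross-covariances as $\boldsymbol\chi_{ab}\la f,g\ra$, which is \eqref{eq:dobrushin-vector-white-noise-new}.

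\textbf{Differentiated statement.} Apply Proposition~\ref{prop:duality} to each $\phi^a$, which is allowed since $\sum_z|\Gamma|<\infty$. It gives $\YN^a(f)=(-1)^m\WN^a(Pf)+o_{L^2}(1)$, jointly in $a$. Since $Pf\in C_c^\infty$, the first part gives $\WN^a(Pf)\to\mathcal W^a(Pf)$ jointly, and Slutsky's lemma transfers this to \eqref{eq:dobrushin-differentiated-new}, with covariance $\boldsymbol\chi_{ab}\la Pf,Pg\ra$.

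\textbf{Main obstacle.} The main delicate point is a clean reduction of the vector case to the scalar theorem. This means checking that Proposition~\ref{prop:dobrushin-implies-tree-graph} applies to each linear combination $F^{(a)}$, which it does because the constant $D_r\prod\|F_i\|_\infty$ is uniform and multilinearity handles mixed cumulants. It also requires handling a possibly degenerate $a^\top\boldsymbol\chi a=0$, which is harmless since Theorem~\ref{thm:wholespace} allows $\gamma\equiv0$.
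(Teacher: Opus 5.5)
Your ingredients are the ones the paper assembles; it records the corollary without a separate proof. Exponential summability gives a bounded continuous spectral density, hence (W1)--(W2) with constant $\gamma$ and $M=0$. Proposition~\ref{prop:dobrushin-implies-tree-graph} gives (W3), Theorem~\ref{thm:wholespace} gives the scalar white-noise limits, and Proposition~\ref{prop:duality} with Slutsky gives the differentiated statement. Your scalar argument is correct.

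\textbf{The gap is in the passage to the joint $\R^n$-valued field.} Theorem~\ref{thm:wholespace} applied to $\phi^{(a)}$ controls only the vectors $(\WN^{(a)}(f_1),\dots,\WN^{(a)}(f_\ell))$. These give only the linear combinations $\sum_{b,j}t_{bj}\WN^b(f_j)$ whose coefficient matrix has rank one, $t_{bj}=a_bs_j$. Cram\'er--Wold for the vector $(\WN^b(f_j))_{b,j}$ needs every $t$, for instance $\WN^1(f)+\WN^2(g)$ with $f\neq g$, which is not of the form $\WN^{(a)}(h)$. Rank-one projections do not determine a law. Take a centered Gaussian $2\times2$ matrix with unit variances, $\E[X_{11}X_{22}]=-\E[X_{12}X_{21}]=c$, and all other covariances zero. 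Then $a^\top Xs$ has the same law for every $c\in[-1,1]$. For the same reason, polarizing in $a$ only identifies the symmetrized limit
\[
\lim\big[\Cov(\WN^a(f),\WN^b(g))+\Cov(\WN^a(g),\WN^b(f))\big]=2\boldsymbol\chi_{ab}\la f,g\ra .
\]
It does not identify the individual cross-covariance in \eqref{eq:dobrushin-vector-white-noise-new}, or in $\boldsymbol\chi_{ab}\la Pf,Pg\ra$. Both statements involve different test functions in different components, so this step is needed.

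\textbf{The repair uses tools you already invoke.} It is the route signalled by Proposition~\ref{prop:dobrushin-implies-tree-graph}: any finite collection of observables satisfies the tree--graph bound simultaneously. Take $Z_N=\sum_b\WN^b(g_b)$ with arbitrary real $g_b\in C_c^\infty(\R^d)$.
\begin{itemize}
\item Its cumulants of order $r\ge3$ are, by multilinearity, $N^{-rd/2}$ times weighted sums of mixed cumulants $\cum_\mu(\phi^{b_1}_{x_1},\dots,\phi^{b_r}_{x_r})$. The proposition bounds these with distinct $F_i$, so the cumulants are $O(N^{d-rd/2})\to0$.
\item Its variance converges. With $\Gamma^{ab}(z):=\Cov_\mu(\phi^a_0,\phi^b_z)$,
\[
\Cov\big(\WN^a(f),\WN^b(g)\big)=\sum_{z\in\Z^d}\Gamma^{ab}(z)\,N^{-d}\sum_{x\in\Z^d}f(x/N)\,g\big((x+z)/N\big)\longrightarrow\boldsymbol\chi_{ab}\la f,g\ra .
\]
This follows by dominated convergence in $z$: the inner Riemann sums are bounded uniformly in $N,z$ and converge to $\la f,g\ra$, and $\sum_z|\Gamma^{ab}(z)|<\infty$.
\end{itemize}
The moment method of Lemma~\ref{lem:fdd} then makes every $Z_N$ asymptotically Gaussian with the right variance, which is the joint finite-dimensional convergence. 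Componentwise tightness gives tightness in the product space. Your Slutsky argument for \eqref{eq:dobrushin-differentiated-new} then goes through unchanged.
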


\paragraph{Models in the Dobrushin regime.}
Corollary~\ref{cor:dobrushin-fluctuation-fields} applies, with no further cumulant
argument, wherever the Dobrushin condition \eqref{eq:dobrushin-condition-new} has been
verified; the classical instances are the following.
\begin{itemize}[leftmargin=*]
\item The nearest-neighbour ferromagnetic Ising model on $\Z^d$ at high temperature,
$d\tanh(2\beta J)<1$ \cite{dobrushin_description_1968,georgii_gibbs_2011}; the unique state
is spin-flip invariant, so the spin field $\sigma_x$ is centered and
Corollary~\ref{cor:dobrushin-fluctuation-fields} with $F(s)=s$ gives the magnetization
white-noise limit and every differentiated limit, with intensity the susceptibility
$\sum_z\E_\mu[\sigma_0\sigma_z]\in(0,\infty)$.
\item The $q$-state Potts model, ferromagnetic or antiferromagnetic, and more generally
every finite-state, finite-range model with bounded pair interactions, at sufficiently high
temperature
\cite[Ch.~8]{georgii_gibbs_2011}; for Potts the colour fields
$\phi^a_x=\mathbf 1_{\{\sigma_x=a\}}-1/q$, $a=1,\ldots,q$, converge jointly; since
$\sum_a\phi^a_x\equiv0$, the vector $(1,\dots,1)$ lies in the kernel of the susceptibility matrix
and the limit is degenerate in that direction.
\item The antiferromagnetic Potts model on $\Z^d$ with $q>4d$ colours, at all temperatures
including zero temperature, i.e.\ uniform proper $q$-colourings \cite{SalasSokal1997}.
\item The hard-core lattice gas on $\Z^d$ at low activity, below $1/(2d-1)$
\cite{Weitz2005,georgii_gibbs_2011}.
\end{itemize}
The Dobrushin condition is sufficient, not necessary
(Remark~\ref{rem:dobrushin-strength-converse}): the same conclusions hold wherever the
tree--graph bound is established by another method.

\section{The ``differentiation removes correlation'' paradigm}\label{sec:paradigm}

In Fourier space, the operator multiplies the spectral density by $|p(\theta)|^2$ and so suppresses its low-frequency mass, which is exactly the mass responsible for long-range correlation. For a field of finite susceptibility this gain is invisible: the macroscopic observable degenerates and the duality of Section~\ref{sec:duality} hands the limit to the microscopic difference field, with the same summability threshold as for $m=0$.  The gain is genuine in the long-memory regime, where the spectral density is singular at the origin.  The following proposition makes it precise at the level of the covariance and the remark after it says what differentiation does not do.

\begin{proposition}[Over-, exact and under-differentiation]\label{prop:trichotomy}
Let $(\phi_x)_{x\in\Z^d}$ be centered and stationary, with finite moments of all orders and
spectral density
\begin{equation}\label{eq:spectral-singularity}
\widehat\Gamma(\theta)=c_0\,|\theta|^{-\nu}\,(1+o(1)),\qquad\theta\to0,
\end{equation}
for some $\nu\in[0,d)$ and $c_0>0$, bounded outside every neighbourhood of the origin.  Let
$P$ have order $m$ with symbol $p$ and let $\XN$ be the whole-space observable
\eqref{eq:def:PN-ws}.  Then, according to how $m$ compares with $\nu$:
\begin{enumerate}[label=\textup{(\roman*)}]

\item \textup{(over-differentiation, $\nu<2m$)} $\Var(\XN(f))\to0$ for every $f$, so
$\XN\to0$ in probability;
\item \textup{(exact match, $\nu=2m$)} the covariance converges,
\begin{equation}\label{eq:spectral-BM-cov}
\Cov\big(\XN(f),\XN(g)\big)\longrightarrow
c_0\int_{\R^d}\overline{\widehat f(\xi)}\,\widehat g(\xi)\,|p(\xi)|^2|\xi|^{-2m}\dd\xi,
\end{equation}
a bounded multiplier homogeneous of degree zero.

\item \textup{(under-differentiation, $\nu>2m$)} $\Var(\XN(f))\asymp N^{\nu-2m}\to\infty$ whenever $\int|p|^2|\widehat f|^2|\xi|^{-\nu}\dd\xi\ne0$, and under the self-similar normalization $N^{-(d+\nu)/2}$  \cite{dobrushin_non-central_1979}, the covariance converges to
\[
c_0\int\overline{\widehat f}\,\widehat g\,|p(\xi)|^2|\xi|^{-\nu}\dd\xi.
\]
For Gaussian $\phi$ the limit law follows: it is $P$ applied to a fractional Gaussian field
$\mathrm{FGF}_{\nu/2}(\R^d)$ in the parametrization of \cite{lodhia_fractional_2016}.
\end{enumerate}
\end{proposition}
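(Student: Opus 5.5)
The plan is to reduce everything to the exact whole-space covariance formula of Lemma~\ref{lem:ws-cov}, applied to the fixed field with $S_N=\widehat\Gamma$:
\[
\Var(\XN(f))=\int_{Q_N}N^{-2m}\widehat\Gamma(\xi/N)\,|A_{N,Pf}(\xi)|^2\dd\xi .
\]
Here $A_{N,Pf}\to\widehat{Pf}=p\widehat f$ uniformly on $Q_N$ up to $O(N^{-M'})$, with the rapid decay of Lemma~\ref{lem:ws-cov}\,\ref{it:ws-decay}. The proof then only needs the pointwise asymptotics of $N^{-2m}\widehat\Gamma(\xi/N)$ together with a dominating function. By \eqref{eq:spectral-singularity}, $\widehat\Gamma(\theta)\le C|\theta|^{-\nu}$ on $|\theta|\le\delta$ and $\widehat\Gamma\le C$ elsewhere on $\T^d$, so for $\xi\in Q_N$ we have $\widehat\Gamma(\xi/N)\le C N^{\nu}|\xi|^{-\nu}$ (using $|\xi|\le C_dN$). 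Since $\nu<d$, the function $|\xi|^{-\nu}$ is locally integrable. Hence $N^{\nu}|\xi|^{-\nu}|A_{N,Pf}|^2$ is dominated by $C|\xi|^{-\nu}(1+|\xi|)^{-d-1}\in L^1$, uniformly in $N$. This uniform majorant is the key intermediate bound.

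\textbf{Case (ii), exact match $\nu=2m$.} We have $N^{-2m}\widehat\Gamma(\xi/N)\to c_0|\xi|^{-2m}$ for every $\xi\ne0$, and the majorant above is $C|\xi|^{-2m}$. So (W1)--(W2) hold with $\gamma=c_0|\xi|^{-2m}$ and $\Phi=C|\xi|^{-\nu}\mathbf 1_{|\xi|\le1}+C(1+|\xi|)^{0}$ (take $M=0$). Step~1 of the proof of Theorem~\ref{thm:wholespace} then gives \eqref{eq:spectral-BM-cov} directly. Boundedness and zero-homogeneity of $|p|^2|\xi|^{-2m}$ follow from $|p(\xi)|\le C_p|\xi|^m$ and the homogeneity of $p$.

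\textbf{Case (i), $\nu<2m$.} Write $N^{-2m}\widehat\Gamma(\xi/N)=N^{\nu-2m}\cdot N^{-\nu}\widehat\Gamma(\xi/N)$. The second factor is dominated as above, so dominated convergence shows the integral stays bounded, and the prefactor $N^{\nu-2m}\to0$. Chebyshev then gives $\XN(f)\to0$ in probability.

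\textbf{Case (iii), $\nu>2m$.} Rewrite the variance with normalization $N^{-(d+\nu)/2}$: the integrand becomes $N^{-\nu}\widehat\Gamma(\xi/N)|A_{N,Pf}|^2\to c_0|\xi|^{-\nu}|p\widehat f|^2$ under the same majorant. This gives both the stated covariance limit and $\Var(\XN(f))=N^{\nu-2m}\bigl(c_0\int|p|^2|\widehat f|^2|\xi|^{-\nu}+o(1)\bigr)\asymp N^{\nu-2m}$ when that integral is nonzero. For Gaussian $\phi$, linear observables are Gaussian, so convergence of covariances gives convergence of finite-dimensional laws. The limit has covariance multiplier $|p|^2|\xi|^{-\nu}$, which is that of $P$ applied to a field with spectral density $c_0|\xi|^{-\nu}$, i.e.\ $P\,\mathrm{FGF}_{\nu/2}$ up to the constant in the parametrization of \cite{lodhia_fractional_2016}. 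The only delicate matching is that $|\xi|^{-\nu}=|\xi|^{-2s}$ with $s=\nu/2$. I would state the identification at the level of finite-dimensional distributions, and add tightness if needed by the Sobolev argument of Theorem~\ref{thm:wholespace}.

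\textbf{Main obstacle.} The step I expect to be delicate is the uniform majorant near $\xi=0$, specifically the $o(1)$ in \eqref{eq:spectral-singularity} interacting with the region $|\xi|\le\delta N$. The pointwise limit is only used for fixed $\xi$, while domination uses the crude bound $C|\theta|^{-\nu}$. Local integrability of the resulting majorant requires exactly $\nu<d$. In case (i) I also need $|A_{N,Pf}|$ to be bounded near $0$; this holds by Lemma~\ref{lem:ws-cov}\,\ref{it:ws-unif}, but no extra vanishing of $p$ at the origin is needed.
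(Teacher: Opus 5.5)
Your proposal is correct and follows essentially the paper's own proof. It uses the exact covariance formula of Lemma~\ref{lem:ws-cov}, the majorant $N^{-\nu}\widehat\Gamma(\xi/N)\le C|\xi|^{-\nu}$ on $Q_N$ (locally integrable since $\nu<d$), Step~1 of the proof of Theorem~\ref{thm:wholespace} for the limits in (ii) and (iii), and the prefactor $N^{\nu-2m}$ with Chebyshev for (i); note only that the quantity you dominate should read $N^{-\nu}\widehat\Gamma(\xi/N)\,|A_{N,Pf}(\xi)|^2$, not $N^{\nu}|\xi|^{-\nu}|A_{N,Pf}(\xi)|^2$, which is not uniformly bounded in $N$.
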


\begin{proof}
By \eqref{eq:spectral-singularity}, $N^{-2m}\widehat\Gamma(\xi/N)=c_0\,N^{\nu-2m}|\xi|^{-\nu}(1+o(1))$
for fixed $\xi\ne0$, which gives the pointwise limit of $G_N$ in each of the three cases.
All three cases rest on the same domination: for $\varsigma\in\{\nu,2m\}$, on the box $Q_N$
one has $N^{-\varsigma}\widehat\Gamma(\xi/N)\le C|\xi|^{-\nu}\,N^{\nu-\varsigma}$ for
$|\xi|\le\delta N$ by \eqref{eq:spectral-singularity}, while for $|\xi|>\delta N$ the assumed
boundedness of $\widehat\Gamma$ outside the $\delta$-ball gives
$N^{-\varsigma}\widehat\Gamma(\xi/N)\le C_\delta N^{-\varsigma}\le C'_\delta|\xi|^{-\varsigma}$,
since $|\xi|\le N\sqrt d/2$ on $Q_N$.

In case (ii) ($\varsigma=2m=\nu<d$), we have that
$G_N\le\Phi:=C|\xi|^{-2m}$, which is locally integrable and bounded for $|\xi|\ge1$, so
Assumption~\ref{ass:wholespace}\,\ref{it:ws-scaling}--\ref{it:ws-domination} hold with
$M=0$ and $\gamma=c_0|\cdot|^{-2m}$: the covariance convergence
\eqref{eq:spectral-BM-cov} is Step~1 of the proof of Theorem~\ref{thm:wholespace}, and
under the cumulant condition the full theorem applies.

In case (i) ($\varsigma=2m>\nu$), $G_N\to0$ pointwise with the same locally integrable majorant $C|\xi|^{-\nu}$, so $\Var(\XN(f))\to0$ and Chebyshev gives convergence to $0$ in probability.

In case (iii), the self-similarly rescaled weight $N^{-\nu}\widehat\Gamma(\xi/N)$ satisfies the hypotheses with $\gamma=c_0|\cdot|^{-\nu}$ and majorant $C|\xi|^{-\nu}$ ($\nu<d$), so Step~1 of the proof of Theorem~\ref{thm:wholespace} gives the stated covariance limit,
and hence $\Var(\XN(f))=N^{\nu-2m}\big(c_0\int|p|^2|\widehat f|^2|\xi|^{-\nu}\dd\xi+o(1)\big)$.
For Gaussian $\phi$ the variables $\XN(f)$ are centered Gaussian, so diverging variance
forbids tightness of any subsequence, and the self-similar covariance limit determines
the Gaussian law stated.
\end{proof}

\begin{remark}\label{rem:noncentral}
The proposition is a statement about the covariance and not about Gaussianity. For instance, let $\phi$ be Gaussian and $Y_x:=F(\phi_x)$ with $F$ a Hermite polynomial functional of rank $q\ge2$ (Remark~\ref{rem:nonlinear-gaussian}), in the long-memory regime where the spectral density of $Y$ has the singularity \eqref{eq:spectral-singularity} with $\nu\in(0,d)$.  On the line $\nu=2m$ the normalization $N^{-d/2-m}$ coincides with the non-central normalization of Dobrushin--Major and Taqqu \cite{dobrushin_non-central_1979,taqqu_convergence_1979}. For instance, applying $P$ changes the test function and the spectral multiplier but not the chaos order: the limit of the normalized observable, when nonzero, lies in the $q$th Wiener chaos and is not Gaussian
\cite{janson_gaussian_1997}.
The conditional Gaussian statement in (ii) therefore has content where the cumulant condition is checkable, e.g.\ for Gaussian fields ($q=1$); for
$q\ge2$ the correct conclusion, on the line $\nu=2m$ as well as below it, is the non-central Hermite--Rosenblatt limit, outside the present framework.  Slowly varying corrections to the
correlation and the logarithmic boundary case (susceptibility diverging like $\log N$, normalization $(N^d\log N)^{-1/2}$) are not treated
here but represent an interesting direction of future research. 
\end{remark}

\section*{Acknowledgments}

The authors have used AI tools to review the existing literature, do a coherence verification of the results and do the straightforward steps in the proofs. The authours have double checked and validated every generated output.

\bibliographystyle{plain}
\bibliography{refs}

\begin{thebibliography}{10}

\bibitem{aizenman_sharpness_1987}
Michael Aizenman, David~J. Barsky, and Roberto Fern\'andez.
\newblock The phase transition in a general class of {Ising}-type models is sharp.
\newblock {\em Journal of Statistical Physics}, 47:343--374, 1987.

\bibitem{armstrong_quantitative_2024}
Scott Armstrong and Paul Dario.
\newblock Quantitative hydrodynamic limits of the {Langevin} dynamics for gradient interface models.
\newblock {\em Electronic Journal of Probability}, 29:1--93, 2024.

\bibitem{armstrong_c2_2022}
Scott Armstrong and Wei Wu.
\newblock {$C^2$} regularity of the surface tension for the $\nabla\phi$ interface model.
\newblock {\em Communications on Pure and Applied Mathematics}, 75(2):349--421, 2022.

\bibitem{bierme_central_2011}
Hermine Bierm\'e, Aline Bonami, and Jos\'e~R. Le\'on.
\newblock Central limit theorems and quadratic variations in terms of spectral density.
\newblock {\em Electronic Journal of Probability}, 16:362--395, 2011.

\bibitem{billingsley_convergence_1999}
Patrick Billingsley.
\newblock {\em Convergence of Probability Measures}.
\newblock Wiley, New York, 2nd edition, 1999.

\bibitem{biskup_scaling_2011}
Marek Biskup and Herbert Spohn.
\newblock Scaling limit for a class of gradient fields with nonconvex potentials.
\newblock {\em The Annals of Probability}, 39(1):224--251, 2011.

\bibitem{bolthausen_central_1982}
Erwin Bolthausen.
\newblock On the central limit theorem for stationary mixing random fields.
\newblock {\em The Annals of Probability}, 10(4):1047--1050, 1982.

\bibitem{breuer_central_1983}
P\'eter Breuer and P\'eter Major.
\newblock Central limit theorems for non-linear functionals of {Gaussian} fields.
\newblock {\em Journal of Multivariate Analysis}, 13(3):425--441, 1983.

\bibitem{buchholz_phase_2021}
Simon Buchholz.
\newblock Phase transitions for a class of gradient fields.
\newblock {\em Probability Theory and Related Fields}, 179(3):969--1022, 2021.

\bibitem{cipriani_properties_2023}
Alessandra Cipriani, Rajat~S. Hazra, Alan Rapoport, and Wioletta~M. Ruszel.
\newblock Properties of the gradient squared of the discrete {Gaussian} free field.
\newblock {\em Journal of Statistical Physics}, 190:171, 2023.

\bibitem{conlon_strong_2013}
Joseph~G. Conlon and Thomas Spencer.
\newblock A strong central limit theorem for a class of random surfaces.
\newblock {\em Communications in Mathematical Physics}, 325(1):1--15, 2014.

\bibitem{coppini_central_2025}
Fabio Coppini and Wioletta~M. Ruszel.
\newblock Central limit theorems for non-linear functionals of {Gaussian} fields via {Wiener} chaos decomposition, 2025.
\newblock arXiv:2512.13148.

\bibitem{coppini_wick_2025}
Fabio Coppini, Wioletta~M. Ruszel, and Dirk Schuricht.
\newblock Wick theorem for analytic functions of {Gaussian} fields, 2025.
\newblock arXiv:2507.05131.

\bibitem{cotar_existence_2012}
Codina Cotar and Christof K{\"u}lske.
\newblock Existence of random gradient states.
\newblock {\em The Annals of Applied Probability}, 22(4):1650--1692, 2012.

\bibitem{cramer_theorems_1936}
Harald Cram\'er and Herman Wold.
\newblock Some theorems on distribution functions.
\newblock {\em Journal of the London Mathematical Society}, s1-11(4):290--294, 1936.

\bibitem{deuschel_ray-knight_2024}
Jean-Dominique Deuschel and Pierre-Fran\c{c}ois Rodriguez.
\newblock A {Ray--Knight} theorem for $\nabla\phi$ interface models and scaling limits.
\newblock {\em Probability Theory and Related Fields}, 189(1-2):447--499, 2024.

\bibitem{dobrushin_description_1968}
Roland~L. Dobrushin.
\newblock The description of a random field by means of conditional probabilities and conditions of its regularity.
\newblock {\em Theory of Probability \& Its Applications}, 13(2):197--224, 1968.

\bibitem{dobrushin_non-central_1979}
Roland~L. Dobrushin and P\'eter Major.
\newblock Non-central limit theorems for non-linear functionals of {Gaussian} fields.
\newblock {\em Zeitschrift f\"ur Wahrscheinlichkeitstheorie und verwandte Gebiete}, 50(1):27--52, 1979.

\bibitem{dousse_weighted_2019}
Jehanne Dousse and Valentin F\'eray.
\newblock Weighted dependency graphs and the {Ising} model.
\newblock {\em Annales de l'Institut Henri Poincar\'e D}, 6(4):533--571, 2019.

\bibitem{duminil-copin_new_2016}
Hugo Duminil-Copin and Vincent Tassion.
\newblock A new proof of the sharpness of the phase transition for {Bernoulli} percolation and the {Ising} model.
\newblock {\em Communications in Mathematical Physics}, 343(2):725--745, 2016.

\bibitem{feray_weighted_2018}
Valentin F\'eray.
\newblock Weighted dependency graphs.
\newblock {\em Electronic Journal of Probability}, 23:1--65, 2018.

\bibitem{Follmer1982}
Hans F{\"o}llmer.
\newblock A covariance estimate for {Gibbs} measures.
\newblock {\em Journal of Functional Analysis}, 46(3):387--395, 1982.

\bibitem{friedli_velenik_2017}
Sacha Friedli and Yvan Velenik.
\newblock {\em Statistical Mechanics of Lattice Systems: A Concrete Mathematical Introduction}.
\newblock Cambridge University Press, 2017.

\bibitem{gass_scars_2026}
Louis Gass, Giovanni Peccati, and Michele Stecconi.
\newblock Scars in random waves and the {FGF}$(1/2)$ universality class, 2026.
\newblock arXiv:2606.07842.

\bibitem{georgii_gibbs_2011}
Hans-Otto Georgii.
\newblock {\em Gibbs Measures and Phase Transitions}, volume~9 of {\em De Gruyter Studies in Mathematics}.
\newblock De Gruyter, Berlin, 2nd edition, 2011.

\bibitem{giacomin_equilibrium_2001}
Giambattista Giacomin, Stefano Olla, and Herbert Spohn.
\newblock Equilibrium fluctuations for $\nabla\varphi$ interface model.
\newblock {\em The Annals of Probability}, 29(3):1138--1172, 2001.

\bibitem{giraitis_clt_1985}
Liudas Giraitis and Donatas Surgailis.
\newblock {CLT} and other limit theorems for functionals of {Gaussian} processes.
\newblock {\em Zeitschrift f\"ur Wahrscheinlichkeitstheorie und verwandte Gebiete}, 70(2):191--212, 1985.

\bibitem{janson_gaussian_1997}
Svante Janson.
\newblock {\em Gaussian {Hilbert} Spaces}, volume 129 of {\em Cambridge Tracts in Mathematics}.
\newblock Cambridge University Press, 1997.

\bibitem{kipnis_central_1986}
Claude Kipnis and S.~R.~S. Varadhan.
\newblock Central limit theorem for additive functionals of reversible {Markov} processes and applications to simple exclusions.
\newblock {\em Communications in Mathematical Physics}, 104(1):1--19, 1986.

\bibitem{kotecky_cluster_1986}
R.~Koteck\'y and D.~Preiss.
\newblock Cluster expansion for abstract polymer models.
\newblock {\em Communications in Mathematical Physics}, 103(3):491--498, 1986.

\bibitem{leonenko_limit_1999}
Nikolai Leonenko.
\newblock {\em Limit Theorems for Random Fields with Singular Spectrum}, volume 465 of {\em Mathematics and its Applications}.
\newblock Kluwer, Dordrecht, 1999.

\bibitem{leonenko_tauberian_2013}
Nikolai Leonenko and Andriy Olenko.
\newblock Tauberian and {Abelian} theorems for long-range dependent random fields.
\newblock {\em Methodology and Computing in Applied Probability}, 15(4):715--742, 2013.

\bibitem{lodhia_fractional_2016}
Asad Lodhia, Scott Sheffield, Xin Sun, and Samuel~S. Watson.
\newblock Fractional {Gaussian} fields: a survey.
\newblock {\em Probability Surveys}, 13:1--56, 2016.

\bibitem{mcauley_limit_2026}
Michael McAuley.
\newblock Limit theorems for non-local functionals of smooth {Gaussian} fields via quasi-association, 2026.
\newblock arXiv:2601.04002.

\bibitem{miller_fluctuations_2011}
Jason Miller.
\newblock Fluctuations for the {Ginzburg--Landau} $\nabla\phi$ interface model on a bounded domain.
\newblock {\em Communications in Mathematical Physics}, 308(3):591--639, 2011.

\bibitem{mitoma_tightness_1983}
Itaru Mitoma.
\newblock Tightness of probabilities on $c([0,1];\mathscr s')$ and $d([0,1];\mathscr s')$.
\newblock {\em The Annals of Probability}, 11(4):989--999, 1983.

\bibitem{naddaf_homogenization_1997}
Ali Naddaf and Thomas Spencer.
\newblock On homogenization and scaling limit of some gradient perturbations of a massless free field.
\newblock {\em Communications in Mathematical Physics}, 183(1):55--84, 1997.

\bibitem{newman_normal_1980}
Charles~M. Newman.
\newblock Normal fluctuations and the {FKG} inequalities.
\newblock {\em Communications in Mathematical Physics}, 74(2):119--128, 1980.

\bibitem{newman_gaussian_2018}
Charles~M. Newman and Wei Wu.
\newblock Gaussian fluctuations for the classical {XY} model.
\newblock {\em Annales de l'Institut Henri Poincar\'e, Probabilit\'es et Statistiques}, 54(4):1759--1777, 2018.

\bibitem{nourdin_normal_2012}
Ivan Nourdin and Giovanni Peccati.
\newblock {\em Normal Approximations with {Malliavin} Calculus: From Stein's Method to Universality}, volume 192 of {\em Cambridge Tracts in Mathematics}.
\newblock Cambridge University Press, 2012.

\bibitem{peccati_wiener_2011}
Giovanni Peccati and Murad~S. Taqqu.
\newblock {\em Wiener Chaos: Moments, Cumulants and Diagrams}, volume~1 of {\em Bocconi \& Springer Series}.
\newblock Springer Milan, Milano, 2011.

\bibitem{SalasSokal1997}
Jes\'us Salas and Alan~D. Sokal.
\newblock Absence of phase transition for antiferromagnetic {Potts} models via the {Dobrushin} uniqueness theorem.
\newblock {\em Journal of Statistical Physics}, 86:551--579, 1997.

\bibitem{saulis_limit_1991}
L.~Saulis and V.~A. Statulevi\v{c}ius.
\newblock {\em Limit Theorems for Large Deviations}, volume~73 of {\em Mathematics and its Applications}.
\newblock Kluwer, Dordrecht, 1991.

\bibitem{shiryaev_probability-2_2019}
Albert~N. Shiryaev.
\newblock {\em Probability-2}, volume~95 of {\em Graduate Texts in Mathematics}.
\newblock Springer New York, New York, NY, 2019.

\bibitem{taqqu_convergence_1979}
Murad~S. Taqqu.
\newblock Convergence of integrated processes of arbitrary {Hermite} rank.
\newblock {\em Zeitschrift f\"ur Wahrscheinlichkeitstheorie und verwandte Gebiete}, 50(1):53--83, 1979.

\bibitem{thoma_thermodynamic_2023}
Eric Thoma.
\newblock Thermodynamic and scaling limits of the non-{Gaussian} membrane model.
\newblock {\em The Annals of Probability}, 51(2):626--664, 2023.

\bibitem{Weitz2005}
Dror Weitz.
\newblock Combinatorial criteria for uniqueness of {Gibbs} measures.
\newblock {\em Random Structures \& Algorithms}, 27(4):445--475, 2005.

\bibitem{wu_local_2025}
Wei Wu.
\newblock Local central limit theorem for gradient field models.
\newblock {\em Probability Theory and Related Fields}, 191(1):1--40, 2025.

\end{thebibliography}

\end{document}